\DeclareMathOperator{\per}{Per}
\DeclareMathOperator{\dist}{dist}
\def\ds{\displaystyle}
\def\eps{{\varepsilon}}
\def\N{\mathbb{N}}
\def\R{\mathbb{R}}
\def\O{\Omega}
\def\Dr{D}
\def\F{\mathcal{F}}
\def\G{\mathcal{G}}
\def\HH{\mathcal{H}}
\def\LL{\mathcal{L}}
\def\PP{\mathcal{P}}
\def\vf{\varphi}
\newcommand{\be}{\begin{equation}}
\newcommand{\ee}{\end{equation}}
\newcommand{\bib}[4]{\bibitem{#1}{\sc#2: }{\it#3. }{#4.}}
\newcommand{\cp}{\mathop{\rm cap}\nolimits}
\newcommand{\ind}{\mathbbm{1}}
\newcommand{\mean}[1]{\,-\hskip-1.08em\int_{#1}} 
\numberwithin{equation}{section}
\theoremstyle{plain}
\newtheorem{teo}{Theorem}[section]
\newtheorem{lemma}[teo]{Lemma}
\newtheorem{cor}[teo]{Corollary}
\newtheorem{prop}[teo]{Proposition}
\newtheorem{deff}[teo]{Definition}
\theoremstyle{remark}
\newtheorem{oss}[teo]{Remark}
\newtheorem{exam}[teo]{Example}
\title[The spectral drop problem]{The spectral drop problem}
\author{Giuseppe Buttazzo}
\address{Dipartimento di Matematica, Universit\`a di Pisa, Largo B. Pontecorvo 5, 56126 Pisa, ITALY}
\email{buttazzo@dm.unipi.it}
\author{Bozhidar Velichkov}
\address{Dipartimento di Matematica, Universit\`a di Pisa, Largo B. Pontecorvo 5, 56126 Pisa, ITALY}
\email{b.velichkov@sns.it}
\begin{document}
\maketitle

\begin{abstract}
We consider spectral optimization problems of the form
$$\min\Big\{\lambda_1(\O;D):\ \O\subset D,\ |\O|=1\Big\},$$
where $D$ is a given subset of the Euclidean space $\R^d$. Here $\lambda_1(\O;D)$ is the first eigenvalue of the Laplace operator $-\Delta$ with Dirichlet conditions on $\partial\O\cap D$ and Neumann or Robin conditions on $\partial\O\cap\partial D$. The equivalent variational formulation
$$\lambda_1(\O;D)=\min\left\{\int_\O|\nabla u|^2\,dx+k\int_{\partial D}u^2\,d\HH^{d-1}\ :\ u\in H^1(D),\ u=0\hbox{ on }\partial\O\cap D,\ \|u\|_{L^2(\O)}=1\right\}$$
reminds the classical drop problems, where the first eigenvalue replaces the total variation functional. We prove an existence result for general shape cost functionals and we show some qualitative properties of the optimal domains.
\end{abstract}

\medskip
\textbf{Keywords:} Shape optimization, spectral cost, drop problems, Dirichlet energy

\textbf{2010 Mathematics Subject Classification:} 49A50, 49G05, 49R50, 49Q10

\section{Introduction}\label{sintro}

We fix an open set $D\subset\R^d$ with a Lipschitz boundary, not necessarily bounded, and a function $f\in L^2(D)$; for every domain $\O\subset D$ we define the Sobolev space
$$H^1_0(\O;D)=\big\{u\in H^1(D)\ :\ u=0\ \hbox{ q.e. on }D\setminus\O\big\}$$
where q.e. means, as usual, up to a set of capacity zero. When $\Dr=\R^d$ we use the notation $H^1_0(\O):=H^1_0(\O;\R^d)$. We also fix a real number $k$ and we define the energy $E_{k,f}(\O)$ by the variational problem
\be\label{energy}
E_{k,f}(\O)=\inf\left\{\frac12\int_D|\nabla u|^2\,dx+\frac k2\int_{\partial D}u^2\,d\HH^{d-1}-\int_Dfu\,dx\ :\ u\in H^1_0(\O;D)\right\}.
\ee
Note that if $\O$ is an open set with $\overline\O\subset D$, then the condition $u\in H^1_0(\O;D)$ is equivalent to require $u\in H^1_0(\O)$. On the contrary, if $\partial\O\cap\partial D\ne\emptyset$ and if the infimum in \eqref{energy} is attained, passing to the Euler-Lagrange equation associated to \eqref{energy} we obtain
\be\label{pde}
\begin{cases}
\begin{array}{rll}
\ds-\Delta u=f&\hbox{in}&\O,\\
\ds u=0&\hbox{on}&\partial\O\cap \Dr,\\
\ds\frac{\partial u}{\partial n}+ku=0&\hbox{on}&\partial\O\cap\partial \Dr.
\end{array}
\end{cases}
\ee
It is not difficult to see that the infimum in \eqref{energy} is attained whenever $k>-k_0(\O)$, where
$$k_0(\O)=\inf\left\{\int_D|\nabla u|^2\,dx\ :\ u\in H^1_0(\O;D),\ \|u\|_{L^2(\partial D)}=1\right\}.$$
Our goal is to study the shape optimization problem
\be\label{shoptene}
\min\Big\{E_{k,f}(\O)\ :\ \O\subset D,\ |\O|\le1\Big\},
\ee
where we have normalized to $1$ the measure constraint on the competing domains $\O$. Of course, the set $D$ is assumed to have a measure larger than $1$.

In the rest of the paper we consider a number $k$ which is not too negative; more precisely, we assume that $k>-k_0$ where
$$k_0=-\inf\big\{k_0(\O)\ :\ \O\subset D,\ |\O|\le1\big\}.$$
If the condition above is violated and $k<-k_0(\O)$ for some $\O$, then it is easy to see that $E_{k,f}(\O)=-\infty$, hence the shape optimization problem \eqref{shoptene} is not well posed. The limit case $k=-k_0$ is more delicate and the well posedness of \eqref{shoptene} depends on the geometry of $D$. A detailed analysis for the shape functional $\lambda_1(\O)$ with Robin boundary conditions can be found in \cite{dan13}.

Replacing $E_{k,f}(\O)$ by another shape functional $\F(\O)$ we may consider the more general class of problems
\be\label{shoptgen}
\min\Big\{\F(\O)\ :\ \O\subset D,\ |\O|\le1\Big\}.
\ee

For an overview on shape optimization problems we refer to \cite{bubu05, bdm93, hepi05}. Typical cases of shape functionals are the following.

\medskip{\it Integral functionals.} Given a right-hand side $f\in L^2(D)$, for every $\O\subset D$ we consider the solution $u_\O$ of the PDE \eqref{pde}, extended by zero on $D\setminus\O$. We may then consider the integral cost
$$\F(\O)=\int_D j\big(x,u_\O(x),\nabla u_\O(x)\big)\,dx,$$
where $j$ is a suitable integrand. For instance, an integration by parts in \eqref{pde} gives that the energy $E_{k,f}(\O)$ is an integral functional, with
$$j(x,s,z)=-\frac12 f(x)s.$$

\medskip{\it Spectral functionals.} For every domain $\O\subset D$ we consider the spectrum $\lambda(\O)$ of the Laplace operator $-\Delta$ on the Hilbert space $H^1_0(\O;D)$, with Robin condition $\frac{\partial u}{\partial n}+ku=0$ on the common boundary $\partial\O\cap\partial D$. Since the Lebesgue measure of $\O$ is finite, the operator $-\Delta$ has a compact resolvent and so its spectrum $\lambda(\O)$ consists of a sequence of eigenvalues $\lambda(\O)=\big(\lambda_j(\O)\big)_j$. The spectral cost functionals we may consider are of the form
$$\F(\O)=\Phi\big(\lambda(\O)\big),$$
for a suitable function $\Phi:\R^\N\to\overline{\R}$. For instance, taking $\Phi(\lambda)=\lambda_j$ we obtain
$$\F(\O)=\lambda_j(\O).$$
For an overview on spectral optimization problems we refer to \cite{but10, butve, deve}

\medskip The form of the optimization problems \eqref{shoptene} and \eqref{shoptgen} reminds the so-called {\it drop problems} (see for instance \cite{finn80,giusti81,gonzalez76,wente80} and references therein), where the cost functional $F(\O)$ involves the perimeter of $\O$ relative to $D$:
$$\F(\O)=\per(\O;D)+k\int_{\partial D}\ind_\O\,d\HH^{d-1}+\int_\O f(x)\,dx.$$

When $D$ is bounded we give a rather general existence theorem of optimal domains; assuming that the optimal domains are regular enough, we provide some necessary conditions of optimality describing the qualitative behaviour of the optimal sets. Another interesting situation occurs when $D=\R^d\setminus K$ where $K$ is the closure of a bounded Lipschitz domain. Also in this case a rather general existence result holds.

Finally we consider the case $\partial D$ unbounded and we provide some sufficient conditions for the existence of an optimal domain. We also provide some counterexamples showing that in general the existence of optimal domains may not occur.

In the paper, for simplicity, we consider the case $k=0$; the general case can be obtained by small modifications in the proofs.

\section{Preliminaries}

\subsection{Capacity, quasi-open sets and quasi-continuous functions}
For an open set $\Dr\subset\R^d$, we denote with $H^1(\Dr)$ the Sobolev space, obtained is closure of the space $C^\infty(\R^d)$ with respect to the norm $$\|u\|_{H^1(\Dr)}:=\left(\int_{\Dr}|\nabla u|^2\,dx+\int_{\Dr}u^2\,dx\right)^{1/2}.$$
For a generic set $E\subset\R^d$ we define the capacity $\cp(E)$ as
$$\cp(E):=\min\Big\{\int_{\R^d}\big(|\nabla u|^2+u^2\big)\,dx:\ u\in H^1(\R^d),\ u\ge 1\ \hbox{in a neighbourhood of}\ E\Big\}.$$
We note that, $\cp(E)\ge |E|$ and so, the sets of zero capacity are also of Lebesgue measure zero. We will say that a property $\PP$ holds \emph{quasi-everywhere}, if $\PP$ hold for every point $x$, outside a set of capacity zero. 

\begin{deff}
We say that a set $\O\subset\R^d$ is \emph{quasi-open}, if for every $\eps>0$, there is an open set $\omega_\eps$ such that 
$$\cp(\omega_\eps)\le\eps\ \hbox{and the set}\ \O\cup\omega_\eps\ \hbox{is open}.$$
We say that a function $u:\Dr\to\R$ is \emph{quasi-continuous}, if for every $\eps>0$, there is an open set $\omega_\eps$ such that 
$$\cp(\omega_\eps)\le\eps\hbox{ and the restriction of $u$ on the set }\Dr\setminus\omega_\eps\ \hbox{is continuous}.$$
\end{deff}

It is well-known that a Sobolev function $u\in H^1(\Dr)$ has a quasi-continuous representative $\widetilde u$, which is unique up to a set of zero capacity. Moreover, in \cite{evgar} it was proved that quasi-every $x_0$ is a Lebesgue point for $u$ and the quasi-continuous representative $\widetilde u$ of $u$ can be pointwise characterized as 
$$\widetilde u(x_0)=\lim_{r\to0}\mean{B_r(x_0)}{u(x)\,dx}.$$
From now on, we will identify a Sobolev function $u$ with its quasi-continuous representative $\widetilde u$. 

By the definition of a quasi-open set and a quasi-continuous function, we note that for every Sobolev function $u\in H^1(\Dr)$, the level set $\{u>0\}$ is quasi-open. On the other hand, for each quasi-open set $\O\subset\R^d$, there is a Sobolev function $u\in H^1(\R^d)$ such that $\O=\{u>0\}$, up to a set of zero capacity.

We note that if the sequence $u_n\in H^1(\Dr)$ converges in $H^1(\Dr)$ to a function $u\in H^1(\Dr)$, then up to a subsequence $u_n(x)$ converges to $u(x)$ for quasi-every point $x\in\Dr$. Therefore, for every set $\O\subset\Dr$, the family of functions
$$H^1_0(\O;\Dr)=\Big\{u\in H^1(\Dr)\ :\ u=0\hbox{ q.e. on }D\setminus\O\Big\},$$
is a closed linear subspace of $H^1(\Dr)$. When $\Dr=\R^d$, we get simply $H^1_0(\O;\R^d)=H^1_0(\O)$, defined as the closure of $C^\infty_c(\O)$ with respect to the norm $\|\cdot\|_{H^1(\R^d)}$.\\

\subsection{Partial differential equations on quasi-open sets}
Let $\Dr\subset\R^d$ be an open set and let $\O\subset\Dr$ be a quasi-open set. For a given function $f\in L^2(\Dr)$, we say that $u$ is a solution of the partial differential equation (with mixed boundary conditions)
\be\label{pdembc}
-\Delta u=f\ \ \hbox{in}\ \ \O,\qquad \frac{\partial u}{\partial n}=0\ \ \hbox{on}\ \ \partial\Dr,\qquad u=0\ \ \hbox{on}\ \ \partial\O\cap\Dr,
\ee
if $u\in H^1_0(\O;\Dr)$ and 
$$\int_{\Dr}\nabla u\cdot\nabla v\,dx=\int_{\Dr}fv\,dx,\qquad\forall v\in H^1_0(\O;\Dr).$$

\begin{oss}\label{existspde}
Suppose that the connected open set $\Dr$ and the quasi-open $\O\subset\Dr$ are such that the inclusion $H^1_0(\O;\Dr)\hookrightarrow L^2(\Dr)$ is compact. Then we have:
\begin{itemize}
\item the first eigenvalue $\lambda_1(\O;\Dr)$ defined as
$$\lambda_1(\O;\Dr):=\min\Big\{\int_{\Dr}|\nabla v|^2\,dx:\ v\in H^1_0(\O;\Dr),\ \int_{\Dr}v^2\,dx=1\Big\},$$
is finite and strictly positive if $\O\neq\Dr$;
\item there is a unique minimizer $u_f\in H^1_0(\O;\Dr)$ of the functional
$$J_f(v)=\frac12\int_{\Dr}|\nabla v|^2\,dx-\int_{\Dr}vf\,dx,\qquad v\in H^1_0(\O;\Dr).$$
Writing the Euler-Lagrange equations for $u_f$, we get that it solves \eqref{pdembc}.
\end{itemize}
\end{oss}

We note that the inclusion $H^1_0(\O;\Dr)\hookrightarrow L^2(\Dr)$ is not always compact even if $\Dr$ is smooth and $\O$ is bounded. On the other hand it is well known that the compact inclusion $H^1_0(\O;\Dr)\hookrightarrow L^2(\Dr)$ occurs when:
\be\label{assdr}
\Dr\hbox{ is connected, uniformly Lipschitz and }|\O|<|\Dr|.
\ee
This covers for instance the following situations:
\begin{itemize}
\item $\Dr$ is bounded, $\partial\Dr$ is Lipschitz and $|\O|<|\Dr|$;
\item $\R^d\setminus\Dr$ is bounded, $\partial\Dr$ is Lipschitz and $|\O|<\infty$;
\item $\Dr$ is an unbounded convex open set and $|\O|<\infty$.
\end{itemize}

\begin{prop}\label{gencondprop}
Suppose that the open set $\Dr\subset\R^d$ and the quasi-open $\O\subset\Dr$ satisfy \eqref{assdr}. If $\delta<|\Dr|$ is such that $|\O|\le\delta$, then there is a constant $C>0$, depending on the dimension $d$, the constant $\delta$ and the box $\Dr$, such that
\begin{enumerate}[(i)]
\item the embedding $H^1_0(\O;\Dr)\hookrightarrow L^2(\Dr)$ is compact;
\item there is a constant $C>0$, depending only on the measure $|\O|$, such that 
\begin{align*}
\Big(\int_{\Dr} |u|^{2d/(d-2)}\,dx\Big)^{(d-2)/d}\le C\int_{\Dr}|\nabla u|^2\,dx
&\quad\forall u\in H^1_0(\O;\Dr),&\hbox{if}\ d\ge3;\\
\int_{\Dr}|u|^{\gamma}\,dx\le C\Big(\int_{\Dr}|u|^{\gamma-2}\,dx\Big)\Big(\int_{\Dr}|\nabla u|^2\,dx\Big)
&\quad\forall u\in H^1_0(\O;\Dr),\ \forall\gamma\ge2,& \hbox{if}\ d=2;
\end{align*}
\item for the first eigenvalue $\lambda_1(\O;\Dr)$, in any dimension $d\ge 2$, we have
$$\lambda_1(\O;\Dr)^{-1}\le C|\O|^{2/d}.$$
\end{enumerate}
\end{prop}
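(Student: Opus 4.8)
The plan is to reduce everything to a single homogeneous (scale invariant) Sobolev inequality on $\Dr$, namely (ii), from which the eigenvalue bound (iii) is immediate by H\"older and the compactness (i) follows by a localization-plus-tail argument. The only structural input I need is that a uniformly Lipschitz domain supports the relative isoperimetric inequality with a uniform constant: there is $C_\Dr>0$ such that $\min(|E|,|\Dr\setminus E|)^{1-1/d}\le C_\Dr\,\per(E;\Dr)$ for every set $E\subset\Dr$ of finite perimeter. By the coarea formula this is equivalent to the relative $W^{1,1}$-Sobolev inequality on $\Dr$. The whole role of the measure constraint is to turn the two-sided quantity $\min(|E|,|\Dr\setminus E|)$ into $|E|$ up to a constant.

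For (ii) I first take $u\in H^1_0(\O;\Dr)$ with $u\ge0$ and set $\mu(t)=|\{u>t\}|$. Since $\{u>t\}\subset\O$ up to a null set, $\mu(t)\le|\O|\le\delta<|\Dr|$, so that $\min(\mu(t),|\Dr|-\mu(t))\ge c_\delta\,\mu(t)$ with $c_\delta=\min\{1,(|\Dr|-\delta)/\delta\}>0$. Inserting this into the relative isoperimetric inequality and integrating in $t$ via the coarea formula yields the homogeneous estimate $\|u\|_{L^{d/(d-1)}(\Dr)}\le C(d,\Dr,\delta)\int_\Dr|\nabla u|\,dx$. Applying this to $|u|^{s}$ for a suitable exponent $s$ and using H\"older upgrades it to the stated inequalities: for $d\ge3$ one chooses $s$ so that the left-hand side becomes $\|u\|_{L^{2^*}}$ with $2^*=2d/(d-2)$ and obtains $\|u\|_{L^{2^*}(\Dr)}^2\le C\int_\Dr|\nabla u|^2\,dx$; for $d=2$ one applies the $W^{1,1}$ inequality to $w=|u|^{\gamma/2}$ and estimates $\int_\Dr|\nabla w|=\frac\gamma2\int_\Dr|u|^{\gamma/2-1}|\nabla u|\le\frac\gamma2\bigl(\int_\Dr|u|^{\gamma-2}\bigr)^{1/2}\bigl(\int_\Dr|\nabla u|^2\bigr)^{1/2}$ by Cauchy--Schwarz, which gives exactly $\int_\Dr|u|^\gamma\le C\bigl(\int_\Dr|u|^{\gamma-2}\bigr)\bigl(\int_\Dr|\nabla u|^2\bigr)$ for every $\gamma\ge2$.

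Statements (iii) and (i) then follow from (ii). Since $\{u\ne0\}\subset\O$, H\"older gives $\int_\Dr u^2\le|\O|^{2/d}\|u\|_{L^{2^*}(\Dr)}^2$ for $d\ge3$, and combining with (ii) yields $\int_\Dr u^2\le C|\O|^{2/d}\int_\Dr|\nabla u|^2$; in $d=2$ the case $\gamma=2$ of (ii) reads $\int_\Dr u^2\le C|\O|\int_\Dr|\nabla u|^2$, so taking the infimum in the Rayleigh quotient gives $\lambda_1(\O;\Dr)^{-1}\le C|\O|^{2/d}$ in every dimension. For compactness I take a bounded sequence $u_n$ in $H^1_0(\O;\Dr)$: on each $\Dr\cap B_R$, which is a bounded Lipschitz domain because $\Dr$ is uniformly Lipschitz, Rellich gives strong $L^2(\Dr\cap B_R)$ convergence of a subsequence, and a diagonal argument produces $u_n\to u$ in $L^2_{\mathrm{loc}}(\Dr)$. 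The tail is controlled by (ii): as $u_n=0$ outside $\O$, $\int_{\Dr\setminus B_R}u_n^2=\int_{\O\setminus B_R}u_n^2\le\|u_n\|_{L^{2^*}(\Dr)}^2\,|\O\setminus B_R|^{2/d}\le C\,|\O\setminus B_R|^{2/d}$, which tends to $0$ as $R\to\infty$ uniformly in $n$ because $|\O|<\infty$ (in $d=2$ one uses any $L^\gamma$ bound with $\gamma>2$ from (ii)). Together these give $u_n\to u$ in $L^2(\Dr)$, proving (i).

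The step I expect to be genuinely delicate is the homogeneous inequality (ii) in the unbounded cases of \eqref{assdr}, where the difficulty is concentrated in the uniformity of the relative isoperimetric constant $C_\Dr$, equivalently in ruling out that the $L^2$-mass of a minimizing sequence escapes to infinity along the boundary with vanishing gradient. This is precisely where the uniform Lipschitz hypothesis enters, through a covering of $\Dr$ by comparable Lipschitz charts on which the isoperimetric constant is controlled. An alternative, if one prefers to avoid isoperimetry, is to use the uniform extension operator $H^1(\Dr)\hookrightarrow H^1(\R^d)$ together with a Poincar\'e inequality for functions vanishing on the set $\Dr\setminus\O$ of measure $\ge|\Dr|-\delta>0$; but the Poincar\'e inequality in the unbounded case requires the same localization to exclude the escape of mass, so the core obstacle is identical.
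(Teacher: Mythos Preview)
Your proposal is correct and follows essentially the same route as the paper: derive the homogeneous Sobolev inequality (ii) from the relative isoperimetric inequality in $\Dr$ via coarea and the substitution $\vf=|u|^p$, then obtain (iii) from (ii) by H\"older, and (i) from (ii) together with local compactness and a tail estimate. The paper's proof is terser---it simply asserts that (i) ``is standard and follows by the Lipschitz continuity of $\partial\Dr$, the claim {\it (ii)} and the fact that $|\O|<+\infty$'' and that (ii) follows from the relative isoperimetric inequality by ``replacing $\vf$ by $|u|^p$''---but your more explicit treatment of the tail bound and of the $d=2$ bootstrap is exactly what underlies those assertions.
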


\begin{proof}
The claim {\it (i)} is standard and follows by the Lipschitz continuity of $\partial\Dr$, the claim {\it (ii)} and the fact that $|\O|<+\infty$. For {\it (ii)}, we notice that the condition $|\O|<|\Dr|$ and the connectedness of $\Dr$ provide the isoperimetric inequality
$$|\O|^{(d-1)/d}\le CP(\O;\Dr),$$
where $P(\O;\Dr)$ is the relative perimeter of $\O$ in $\Dr$ and $C$ is a constant depending on $\Dr$ and the measure of $\O$. Now {\it (ii)} follows by the inequality 
$$\left(\int_{\Dr}\vf^{d/(d-1)}\,dx\right)^{(d-1)/d}\le C\int_{\Dr}|\nabla \vf|\,dx,\qquad\forall \vf\in C^{\infty}(\Dr),$$
by replacing $\vf$ by $|u|^p$. The last claim {\it (iii)} follows by {\it (ii)} and the H\"older inequality.
\end{proof}

\begin{cor}
Suppose that the open set $\Dr\subset\R^d$ and the quasi-open $\O\subset\Dr$ satisfy \eqref{assdr}. Then for every function $f\in L^2(\Dr)$ the equation \eqref{pdembc} has a solution.
\end{cor}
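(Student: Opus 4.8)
The plan is to produce the solution as the minimizer of the convex functional
$$J_f(v)=\frac12\int_{\Dr}|\nabla v|^2\,dx-\int_{\Dr}fv\,dx,\qquad v\in H^1_0(\O;\Dr),$$
and then to recover \eqref{pdembc} by writing its Euler--Lagrange equation, exactly as announced in Remark \ref{existspde}. Since assumption \eqref{assdr} is in force, the whole analytic content needed for the direct method has already been secured in Proposition \ref{gencondprop}: part (i) gives the compact embedding $H^1_0(\O;\Dr)\hookrightarrow L^2(\Dr)$, and part (iii) gives the quantitative spectral bound. So the argument reduces to checking coercivity and weak closedness, which I now outline.

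First I would record the Poincar\'e inequality on $H^1_0(\O;\Dr)$ coming from Proposition \ref{gencondprop}(iii): since $\lambda_1(\O;\Dr)^{-1}\le C|\O|^{2/d}<\infty$, every $v\in H^1_0(\O;\Dr)$ satisfies
$$\int_{\Dr}v^2\,dx\le C|\O|^{2/d}\int_{\Dr}|\nabla v|^2\,dx.$$
Thus the gradient norm is an equivalent norm on this subspace, and by Cauchy--Schwarz together with Young's inequality
$$J_f(v)\ge\frac12\int_{\Dr}|\nabla v|^2\,dx-\|f\|_{L^2(\Dr)}\|v\|_{L^2(\Dr)}\ge\frac14\int_{\Dr}|\nabla v|^2\,dx-C'\|f\|_{L^2(\Dr)}^2,$$
so $J_f$ is bounded from below and coercive with respect to $\|\nabla\,\cdot\,\|_{L^2(\Dr)}$, hence with respect to the full $H^1(\Dr)$ norm.

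Next I would run the direct method. Let $u_n\in H^1_0(\O;\Dr)$ be a minimizing sequence; by coercivity it is bounded in $H^1(\Dr)$, so up to a subsequence $u_n\rightharpoonup u$ weakly in $H^1(\Dr)$. As $H^1_0(\O;\Dr)$ is a closed linear subspace of $H^1(\Dr)$, it is weakly closed, whence $u\in H^1_0(\O;\Dr)$. The gradient term is convex and strongly continuous, so weakly lower semicontinuous, while $v\mapsto\int_{\Dr}fv\,dx$ is a bounded linear functional, hence weakly continuous (alternatively one passes to the limit in this term using the compact embedding of Proposition \ref{gencondprop}(i)); therefore $J_f(u)\le\liminf_n J_f(u_n)=\inf J_f$, and $u$ is a minimizer. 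Strict convexity of the gradient term yields uniqueness, though this is not needed for the statement.

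Finally I would derive the equation: for fixed $v\in H^1_0(\O;\Dr)$ the map $t\mapsto J_f(u+tv)$ is a quadratic polynomial minimized at $t=0$, so differentiating at $t=0$ gives
$$\int_{\Dr}\nabla u\cdot\nabla v\,dx-\int_{\Dr}fv\,dx=0\qquad\hbox{for all }v\in H^1_0(\O;\Dr),$$
which is precisely the weak formulation of \eqref{pdembc}. I do not expect a genuine obstacle here: the only nontrivial ingredients, namely the compactness of the embedding and the Poincar\'e-type estimate guaranteeing coercivity, are already established in Proposition \ref{gencondprop} under \eqref{assdr}, and the remainder is the standard variational existence scheme.
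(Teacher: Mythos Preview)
Your proof is correct and follows exactly the route the paper intends: the Corollary has no separate proof in the paper because it is meant to follow immediately from Remark~\ref{existspde} (existence of the minimizer of $J_f$ once the embedding $H^1_0(\O;\Dr)\hookrightarrow L^2(\Dr)$ is compact) combined with Proposition~\ref{gencondprop}(i). You have simply spelled out the direct-method details that the paper leaves implicit, using part~(iii) for the Poincar\'e inequality and coercivity; this is precisely the content behind the statement in Remark~\ref{existspde} that $\lambda_1(\O;\Dr)>0$ and that $J_f$ admits a unique minimizer.
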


The next result is well-known and we report it for the sake of completeness.

\begin{lemma}\label{infttybndflem}
Suppose that the open set $\Dr\subset\R^d$ and the quasi-open $\O\subset\Dr$ satisfy \eqref{assdr}. Let $f\in L^p(\R^d)$, where $p\in(d/2,+\infty]$, be a non-negative function and $u_f\in H^1_0(\O;\Dr)$ be the minimizer of $J_{f}$ in $H^1_0(\O;\Dr)$. If $\delta<|\Dr|$ is such that $|\O|\le \delta$, then we have a constant $C$, depending on the dimension $d$, the exponent $p$, the set $\Dr$ and the measure bound $\delta$, such that
$$\|u_f\|_{\infty}\le C\|f\|_{L^p}|\O|^{2/d-1/p}.$$
\end{lemma}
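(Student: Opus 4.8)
The plan is to derive the bound by the De Giorgi--Stampacchia truncation technique applied to the equation solved by $u_f$. Since $f\ge0$, testing the weak formulation
$$\int_\Dr\nabla u_f\cdot\nabla v\,dx=\int_\Dr fv\,dx\qquad\forall\,v\in H^1_0(\O;\Dr)$$
with $v=-u_f^-\in H^1_0(\O;\Dr)$ gives $\int_\Dr|\nabla u_f^-|^2\,dx\le0$, hence $u_f\ge0$ q.e.; it is therefore enough to bound $u_f$ from above. For a level $t\ge0$ set $v_t:=(u_f-t)^+$ and $A_t:=\{u_f>t\}$. Since $u_f=0$ q.e. on $\Dr\setminus\O$ and $t\ge0$, the function $v_t$ still vanishes q.e. on $\Dr\setminus\O$, so $v_t\in H^1_0(\O;\Dr)$ is an admissible test function, and moreover $v_t\in H^1_0(A_t;\Dr)$. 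Plugging $v_t$ into the weak formulation and using $\nabla u_f\cdot\nabla v_t=|\nabla v_t|^2$ yields the basic identity
$$\int_\Dr|\nabla v_t|^2\,dx=\int_{A_t}f\,v_t\,dx.$$

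Next I would combine this identity with the Sobolev inequality of Proposition~\ref{gencondprop}. Take first $d\ge3$ and write $2^*=2d/(d-2)$. Part~(ii) gives $\|v_t\|_{L^{2^*}}^2\le C\int_\Dr|\nabla v_t|^2\,dx$, while a three--factor H\"older inequality with exponents $p$, $2^*$ and $(1-1/p-1/2^*)^{-1}$ bounds the right--hand side by $\|f\|_{L^p}\,\|v_t\|_{L^{2^*}}\,|A_t|^{1-1/p-1/2^*}$. Dividing out one power of $\|v_t\|_{L^{2^*}}$ gives $\|v_t\|_{L^{2^*}}\le C\|f\|_{L^p}|A_t|^{1-1/p-1/2^*}$. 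On the other hand, for $h>t$ one has $v_t\ge h-t$ on $A_h\subset A_t$, whence $(h-t)|A_h|^{1/2^*}\le\|v_t\|_{L^{2^*}}$. Combining the two inequalities yields the decay relation
$$|A_h|\le\frac{\big(C\|f\|_{L^p}\big)^{2^*}}{(h-t)^{2^*}}\,|A_t|^{\beta},\qquad \beta:=2^*\Big(1-\frac1p\Big)-1.$$
The decisive point is that $\beta>1$ precisely when $1-1/p>2/2^*=(d-2)/d$, i.e. when $p>d/2$; this is where the hypothesis on $p$ is used.

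I would then conclude by the classical Stampacchia iteration lemma: if $\phi(t):=|A_t|$ is nonincreasing and satisfies $\phi(h)\le C_0(h-t)^{-\alpha}\phi(t)^\beta$ for all $h>t\ge0$, with $\alpha>0$ and $\beta>1$, then $\phi(t_0)=0$ for a threshold $t_0$ with $t_0^\alpha=C_0\,\phi(0)^{\beta-1}\,2^{\alpha\beta/(\beta-1)}$. Here $\alpha=2^*$, $C_0=(C\|f\|_{L^p})^{2^*}$ and $\phi(0)=|\{u_f>0\}|\le|\O|$, so that $\|u_f\|_\infty\le t_0=C\|f\|_{L^p}\,|\O|^{(\beta-1)/2^*}$; a direct computation gives $(\beta-1)/2^*=(1-1/p)-(d-2)/d=2/d-1/p$, which is exactly the claimed exponent.

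Finally, for $d=2$ there is no canonical Sobolev exponent, and the main technical care lies precisely here: to recover the sharp power $2/d-1/p=1-1/p$ one cannot simply freeze the constant, but must use the scale--invariant form $\|v_t\|_{L^q}\le C(q)\,|A_t|^{1/q}\,\|\nabla v_t\|_{L^2}$ (valid for every finite $q$ by part~(ii), since $v_t$ is supported in $A_t$), keeping the explicit factor $|A_t|^{1/q}$. Repeating the computation above with this inequality gives $\|v_t\|_{L^q}\le C\|f\|_{L^p}|A_t|^{1-1/p+1/q}$ and hence the decay relation with $\beta=q(1-1/p)+1$, which exceeds $1$ for every $p>1=d/2$ once $q$ is taken large enough that $1-1/p-1/q\ge0$; Stampacchia's lemma then yields $(\beta-1)/q=1-1/p$, as required. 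I expect the only real obstacles to be this bookkeeping of H\"older exponents (so that the final power comes out exactly $2/d-1/p$, including the $|A_t|^{1/q}$ correction in two dimensions) and the verification that each truncation $v_t$ remains in $H^1_0(\O;\Dr)$; once $\beta>1$ is secured from the assumption $p>d/2$, the iteration closes automatically.
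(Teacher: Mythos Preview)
Your argument is correct but proceeds along a genuinely different line than the paper's. You use the De~Giorgi--Stampacchia truncation scheme: test the weak equation with $(u_f-t)^+$, combine the resulting energy identity with the Sobolev inequality of Proposition~\ref{gencondprop} and a three--factor H\"older bound to obtain the geometric decay relation $|A_h|\le C(h-t)^{-2^*}|A_t|^\beta$ with $\beta>1$ (precisely when $p>d/2$), and close by the standard iteration lemma. The paper instead argues via level sets and the co-area formula: from the minimality of $u_f$ against the ``squeezed'' competitor $u\wedge t+(u-t-\eps)^+$ it obtains $\int_{\{u=t\}}|\nabla u|\,d\HH^{d-1}\le 2\|f\|_{L^p}|\{u>t\}|^{1/p'}$, combines this with Cauchy--Schwarz on the level set and the relative isoperimetric inequality in $\Dr$ to derive a differential inequality $\vf'(t)\le -C\|f\|_{L^p}^{-1}\vf(t)^{(d-2)/d+1/p}$ for $\vf(t)=|\{u>t\}|$, and integrates the ODE directly to reach the finite extinction time $t_{\max}\le C\|f\|_{L^p}|\O|^{2/d-1/p}$.

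Your route has the advantage of staying entirely at the level of the weak formulation, using only the Sobolev inequality already recorded in Proposition~\ref{gencondprop}; it avoids the co-area formula and the a.e.\ differentiability of $t\mapsto|\{u>t\}|$. The paper's route is more geometric, making the role of the relative isoperimetric inequality in $\Dr$ explicit, and treats all dimensions $d\ge2$ in one stroke without the separate $d=2$ bookkeeping you need. Both deliver the same sharp exponent $2/d-1/p$, and both hinge on the hypothesis $p>d/2$ at the analogous point (for you, $\beta>1$; for the paper, the ODE exponent $\alpha=(d-2)/d+1/p<1$).
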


\begin{proof}
We set for simplicity $u:=u_f$. For every $t\in(0,\|u\|_\infty)$ and $\eps>0$, we consider the test function
$$u_{t,\eps}=u\wedge t +(u-t-\eps)^+.$$
Since $u_{t,\eps}\le u$ and $J_f(u)\le J_f(u_{t,\eps})$, we get
$$\frac12\int_{\Dr}|\nabla u|^2\,dx-\int_{\Dr}fu\,dx\le \frac12\int_{\Dr}|\nabla u_{t,\eps}|^2\,dx-\int_{\Dr}fu_{t,\eps}\,dx,$$
and after some calculations
$$\frac12\int_{\{t< u\le t+\eps\}}|\nabla u|^2\,dx\le \int_{\Dr} f\left(u-u_{t,\eps}\right)\,dx\le\eps\int_{\{u>t\}}f\,dx.$$
By the co-area formula we have
$$\int_{\{u=t\}}|\nabla u|\,d\HH^{d-1}\le 2\int_{\{u>t\}}f\,dx\le 2\|f\|_{L^p}|\{u>t\}|^{1/p'}.$$
Setting $\vf(t)=|\{u>t\}|$, for almost every $t$ we have 
\[\begin{split}
\vf'(t)&=-\int_{\{u=t\}}\frac{1}{|\nabla u|}\,d\HH^{d-1}\le-\Big(\int_{\{u=t\}}|\nabla u|\,d\HH^{d-1}\Big)^{-1}P(\{u>t\};\Dr)^2\\
&\le -\|f\|_{L^p}^{-1}\vf(t)^{-1+1/p}C_{iso}\vf(t)^{2(d-1)/d}=-\|f\|_{L^p}^{-1}C_{iso}\vf(t)^{(d-2)/d+1/p},
\end{split}\]
where $C_{iso}$ is the constant from the isoperimetric inequality in $\Dr$. Setting $\alpha=\frac{d-2}{d}+\frac1p$, we have that $\alpha<1$ and since the solution of the ODE
$$y'=-Ay^\alpha,\qquad y(0)=|\O|,$$
is given by
$$y(t)=\big(|\O|^{1-\alpha}-(1-\alpha)At\big)^{1/(1-\alpha)}.$$
Note that $\phi(t)\ge 0$, for every $t\ge0$, and $y(t)\ge \phi(t)$, if $\phi(t)>0$. Since $y$ vanishes in a finite time, there is some $t_{\max}$ such that $\phi(t)=0$, for every $t\ge t_{\max}$. Finally we obtain the estimate
$$\|u\|_{\infty}\le t_{\max}\le \frac{\|f\|_{L^p}|\O|^{2/d-1/p}}{C_{iso}\left(2/d-1/p\right)},$$
which concludes the proof.
\end{proof}

\subsection{Eigenfunctions and eigenvalues of the Laplacian with mixed boundary conditions}

In this subsection we suppose that $\Dr\subset\R^d$ and $\O\subset\Dr$ satisfy the condition \eqref{assdr}. Thus the \emph{resolvent} operator $R_\O:L^2(\Dr)\to L^2(\Dr)$, associating to each function $f\in L^2(\Dr)$ the solution $u_f$ of \eqref{pdembc}, is compact and self-adjoint. 

\begin{oss}\label{interp}
By the estimate of Lemma \ref{infttybndflem} we have that $R_\O$ can be extended to a continuous map $R_{\O}:L^p\to L^\infty$. On the other hand the resolvent is also a continuous map $R_\O:L^2\to L^{2d/(d-2)}$, for $d>2$, and $R_\O:L^2\to L^q$, for every $q>1$, for $d=2$. In dimension $d\ge 4$, a standard interpolation argument gives that $R_\O$ can be extended to a continuous operator
$$R_\O:L^q\to L^{\frac{q}{2}\frac{p-2}{p-q}\frac{2d}{d-2}},$$
for all $p>d/2$ and $q\in (2,p)$. This gives that $R_\O$ is a continuous operator
$$R_\O:L^q\to L^{q+\frac{4}{d-2}},$$
for all $q\in [2,d/2]$. In particular, for any $d\ge 2$, there is an entire number $n_d$ depending only on the dimension such that 
$$\left[R_\O\right]^{n_d}:L^2(\Dr)\to L^\infty(\Dr),$$
is a continuous operator.
\end{oss}

 Since the operator $R_\O:L^2\to L^2$ is compact, its spectrum is discrete. We define the spectrum of Laplacian on $\O$, with Neumann condition on $\partial\Dr$ and Dirichlet condition on $\Dr\cap\partial\O$, as the following sequence of inverse elements of the spectrum of $R_\O$.
$$\lambda_1(\O;\Dr)\le \lambda_2(\O;\Dr)\le\dots\le \lambda_k(\O;\Dr)\le\dots$$ 
We note that, for any $k\in\N$, the $k$th eigenvalue of the Laplacian can be variationally characterized as
$$\lambda_k(\O;\Dr)=\min_{S_k\subset H^1_0(\O;\Dr)}\ \max_{u\in S_k}\frac{\int_{\Dr}|\nabla u|^2\,dx}{\int_{\Dr}u^2\,dx},$$
where the minimum is taken over all $k$-dimensional subspaces $S_k\subset H^1_0(\O;\Dr)$. We note that there is a corresponding sequence of eigenfunctions $u_k\in H^1_0(\O;\Dr)$, forming a complete orthonormal sequence in $L^2(\O)$ and solving the equation
$$-\Delta u_k=\lambda_k(\O;\Dr)u_k\ \ \hbox{in}\ \ \O,\qquad \frac{\partial u_k}{\partial n}=0\ \ \hbox{on}\ \ \partial\Dr,\qquad u_k=0\ \ \hbox{on}\ \ \partial\O\cap\Dr.$$

\begin{prop}\label{unibndlbk}
Suppose that the open set $\Dr\subset\R^d$ and the quasi-open $\O\subset\Dr$ satisfy \eqref{assdr}. Then the eigenfunctions $u_k\in H^1_0(\O;\Dr)$ of the Laplace operator, with Dirichlet conditions on $\partial\O\cap\Dr$ and Neumann conditions on $\partial\Dr$, are bounded in $L^\infty(\Dr)$ by a constant that depends only on the dimension $d$, the eigenvalue $\lambda_k(\O;\Dr)$, the set $\Dr$ and the measure of $\O$.
\end{prop}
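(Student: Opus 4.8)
The plan is to bound the $L^\infty$ norm of each eigenfunction $u_k$ by exploiting the eigenvalue equation $-\Delta u_k = \lambda_k u_k$ together with the bootstrap machinery already developed in Lemma~\ref{infttybndflem} and Remark~\ref{interp}. The key observation is that $u_k$ is a fixed point of the resolvent up to a scalar: since $-\Delta u_k = \lambda_k(\O;\Dr)\,u_k$ with the prescribed boundary conditions, we have $u_k = \lambda_k(\O;\Dr)\,R_\O(u_k)$, where $R_\O$ is the resolvent operator introduced above. Iterating $n_d$ times gives $u_k = \lambda_k(\O;\Dr)^{n_d}\,[R_\O]^{n_d}(u_k)$, and by Remark~\ref{interp} the operator $[R_\O]^{n_d}$ maps $L^2(\Dr)$ continuously into $L^\infty(\Dr)$.

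First I would normalize $u_k$ in $L^2$, so that $\|u_k\|_{L^2(\Dr)}=1$, which is exactly the normalization for which $u_k$ belongs to the orthonormal system described before the statement. Then I would apply the continuity of $[R_\O]^{n_d}:L^2(\Dr)\to L^\infty(\Dr)$, whose operator norm depends only on the dimension $d$, the set $\Dr$, and the measure bound $|\O|$ (these are the ingredients in the constants of Lemma~\ref{infttybndflem} and Remark~\ref{interp}). This yields
$$\|u_k\|_{L^\infty(\Dr)} = \lambda_k(\O;\Dr)^{n_d}\,\big\|[R_\O]^{n_d}(u_k)\big\|_{L^\infty(\Dr)} \le C\,\lambda_k(\O;\Dr)^{n_d}\,\|u_k\|_{L^2(\Dr)} = C\,\lambda_k(\O;\Dr)^{n_d},$$
which is precisely the claimed dependence on $d$, $\lambda_k(\O;\Dr)$, $\Dr$, and $|\O|$.

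The main obstacle is verifying that the scalar multiplication used in the iteration is legitimate at each stage, i.e.\ that $u_k$ genuinely lies in the image of $[R_\O]^{n_d}$ and that the identity $u_k=\lambda_k R_\O(u_k)$ holds as an identity in $L^2(\Dr)$ and not merely weakly. This is where one must check that $u_k$, being an eigenfunction, satisfies the weak formulation $\int_\Dr \nabla u_k\cdot\nabla v\,dx = \lambda_k\int_\Dr u_k v\,dx$ for all $v\in H^1_0(\O;\Dr)$, which is exactly the defining property of $R_\O(\lambda_k u_k)=u_k$; since $u_k\in L^2(\Dr)$ this is unambiguous. The interpolation chain in Remark~\ref{interp} guarantees that a finite number $n_d$ of iterations suffices to reach $L^\infty$, so no further regularity input is needed. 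A minor technical point, easily handled, is tracking that the constant $C$ from each application of the smoothing estimate depends only on the allowed quantities and not on $k$ itself; the dependence on $k$ enters solely through the explicit factor $\lambda_k(\O;\Dr)^{n_d}$.
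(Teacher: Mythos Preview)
Your proposal is correct and follows essentially the same argument as the paper: both use the eigenfunction identity $R_\O u_k=\lambda_k^{-1}u_k$, iterate it $n_d$ times, and invoke the $L^2\to L^\infty$ continuity of $[R_\O]^{n_d}$ from Remark~\ref{interp} together with the normalization $\|u_k\|_{L^2}=1$ to obtain $\|u_k\|_{L^\infty}\le C\,\lambda_k(\O;\Dr)^{n_d}$.
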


\begin{proof}
We note that 
$$R_\O[u_k]=\lambda_k(\O;\Dr)^{-1}u_k.$$
By Remark \ref{interp}, we have
$$R_\O^{n_d}[u_k]=\lambda_k(\O;\Dr)^{-n_d}u_k\in L^\infty(\Dr),$$
and, since $\int_{\Dr}u_k^2\,dx=1$, we have
$$\|u_k\|_{L^\infty}\le C\lambda_k(\O;\Dr)^{n_d},$$
where the constant $C$ depends on the measure of $\O$, $d$ and $\Dr$.
\end{proof}

\subsection{Energy and energy function}

Let $\O\subset\Dr$ be as above. We denote with $w_\O$ the solution of 
$$-\Delta w_\O=1\ \ \hbox{in}\ \ \O,\qquad \frac{\partial w_\O}{\partial n}=0\ \ \hbox{on}\ \ \partial\Dr,\qquad w_\O=0\ \ \hbox{on}\ \ \partial \O\cap\Dr,$$
and we will call it \emph{energy function} on $\O$, while the Dirichlet energy of $\O$ is defined as 
$$E_1(\O;\Dr):=-\frac12\int_{\Dr}w_\O\,dx.$$
Sometimes we will use the notation $R_\O(1)$ instead of $w_\O$. The properties of the energy function in a domain $\Dr$ are analogous to the properties of the energy function obtained solving the PDE with Dirichlet boundary condition on the whole $\partial\O$ (see \cite{bucve}). We summarize these properties in the following proposition.

\begin{prop}\label{proene}
For $\Dr\subset\R^d$ and $\O\subset\Dr$ as above, we have that the energy function $w_\O$ satisfies the following properties.
\begin{enumerate}[(a)]
\item $w_\O$ satisfies the bounds 
$$\int_{\Dr}|\nabla w_\O|^2\,dx\le 4\lambda_1(\O;\Dr)^{-1}|\O|\quad,\qquad\int_{\Dr} w_\O^2\,dx\le 4\lambda_1(\O;\Dr)^{-2}|\O|.$$
\item $w_\O$ is bounded and 
$$\|w_\O\|_{L^\infty}\le C(\Dr,|\O|),$$
where $C(\Dr,|\O|)$ is a constant depending only on $\Dr$ and the measure of $\O$.
\item $\Delta w_\O+\ind_{\{w_\O>0\}}\ge 0$ on $\Dr$, in sense of distributions. 
\item Every point of $\Dr$ is a Lebesgue point for $w_\O$.
\item $H^1_0(\O;\Dr)=H^1_0\big(\{w_\O>0\};\Dr\big)$. In particular, if $\O$ is a quasi-open set, then $\O=\{w_\O>0\}$ up to a set of zero capacity.
\end{enumerate} 
\end{prop}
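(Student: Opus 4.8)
The plan is to read off all five properties from the variational characterisation of $w_\O$ as the minimiser of $J_1(v)=\frac12\int_\Dr|\nabla v|^2\,dx-\int_\Dr v\,dx$ over $H^1_0(\O;\Dr)$, equivalently from the weak formulation $\int_\Dr\nabla w_\O\cdot\nabla v\,dx=\int_\Dr v\,dx$ for all $v\in H^1_0(\O;\Dr)$. A useful preliminary is that $w_\O\ge0$: since $w_\O=0$ q.e. outside $\O$, so is $w_\O^-$, and testing with $v=w_\O^-\in H^1_0(\O;\Dr)$ gives $-\int_\Dr|\nabla w_\O^-|^2\,dx=\int_\Dr w_\O^-\,dx\ge0$, forcing $w_\O^-\equiv0$. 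For $(a)$ I take $v=w_\O$, obtaining $\int_\Dr|\nabla w_\O|^2\,dx=\int_\Dr w_\O\,dx$, and combine the Poincar\'e inequality $\int_\Dr w_\O^2\,dx\le\lambda_1(\O;\Dr)^{-1}\int_\Dr|\nabla w_\O|^2\,dx$ with the Cauchy--Schwarz bound $\int_\Dr w_\O\,dx\le|\O|^{1/2}(\int_\Dr w_\O^2\,dx)^{1/2}$ (using $w_\O=0$ q.e. off $\O$); this yields both estimates, in fact with constant $1$ in place of $4$. Property $(b)$ is immediate from Lemma \ref{infttybndflem} applied to $f\equiv1\in L^\infty$ with $p=\infty$, giving $\|w_\O\|_{L^\infty}\le C(\Dr,|\O|)\,|\O|^{2/d}$.

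For $(c)$ the idea is a one-sided perturbation. Fix $\phi\in C_c^\infty(\Dr)$ with $\phi\ge0$ and $t>0$, and use $v_t=(w_\O-t\phi)^+$ as competitor: since $\phi\ge0$ one has $v_t\le w_\O$ and $v_t=0$ q.e. on $\{w_\O=0\}$, so $v_t\in H^1_0(\O;\Dr)$. Expanding $J_1(w_\O)\le J_1(v_t)$ on the set $\{w_\O>t\phi\}$ and discarding the manifestly non-positive contributions coming from $\{w_\O\le t\phi\}$, then dividing by $t$ and letting $t\to0$ (so that $\{w_\O>t\phi\}$ increases to $\{w_\O>0\}$ and $\nabla w_\O=0$ a.e. on $\{w_\O=0\}$), leads to $-\int_\Dr\nabla w_\O\cdot\nabla\phi\,dx+\int_{\{w_\O>0\}}\phi\,dx\ge0$, which is exactly $\langle\Delta w_\O+\ind_{\{w_\O>0\}},\phi\rangle\ge0$. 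The only point needing care is that $t^{-1}\int_{\{0<w_\O\le t\phi\}}w_\O\,dx\to0$, which holds because $|\{0<w_\O\le t\|\phi\|_{L^\infty}\}|\to0$ as $t\to0$.

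Property $(d)$ follows by combining $(b)$ and $(c)$. By $(c)$ one has $\Delta w_\O\ge-\ind_{\{w_\O>0\}}\ge-1$, and since $\Delta(|x|^2/(2d))=1$ the function $w_\O+\tfrac{|x|^2}{2d}$ is subharmonic in $\Dr$. Its solid averages $r\mapsto\mean{B_r(x_0)}{(w_\O+|x|^2/(2d))\,dx}$ are therefore non-decreasing in $r$, hence admit a finite limit as $r\to0$ (the lower bound is $w_\O\ge0$, the upper bound comes from $(b)$); subtracting the continuous term $|x|^2/(2d)$ shows that $\lim_{r\to0}\mean{B_r(x_0)}{w_\O\,dx}$ exists at every $x_0\in\Dr$, i.e. every point of $\Dr$ is a Lebesgue point for $w_\O$ in the sense of the preliminaries.

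The main obstacle is $(e)$, specifically the inclusion $H^1_0(\O;\Dr)\subseteq H^1_0(\{w_\O>0\};\Dr)$, the reverse one being immediate since $\{w_\O>0\}\subset\O$ up to a set of zero capacity. The plan is to prove that every $u\in H^1_0(\O;\Dr)$ vanishes q.e. on $\{w_\O=0\}$; splitting $u=u^+-u^-$ and truncating, it suffices to treat $0\le u\le1$. Consider $\phi_n=(u-nw_\O)^+\in H^1_0(\O;\Dr)$. Testing the weak formulation of $w_\O$ against $\phi_n$ and applying Cauchy--Schwarz on $\{u>nw_\O\}$ gives $\int_{\{u>nw_\O\}}|\nabla w_\O|^2\,dx\le\|\nabla u\|_{L^2}^2/n^2$ and $\int_\Dr\phi_n\,dx\le\|\nabla u\|_{L^2}^2/n$; together with $0\le\phi_n\le1$ and $\int_\Dr\phi_n^2\,dx\le\int_\Dr\phi_n\,dx$ this shows that $(\phi_n)$ is bounded in $H^1(\Dr)$ and that $\phi_n\to0$ in $L^2(\Dr)$, hence $\phi_n\rightharpoonup0$ weakly in $H^1(\Dr)$. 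By Mazur's lemma a sequence of convex combinations of the $\phi_n$ converges strongly in $H^1(\Dr)$, and thus q.e. along a subsequence, to $0$; but on $\{u>0\}\cap\{w_\O=0\}$ every $\phi_n$ equals $u$, so each such convex combination equals $u$ there, forcing $u=0$ q.e. on $\{u>0\}\cap\{w_\O=0\}$. Hence $\cp(\{u>0\}\cap\{w_\O=0\})=0$, i.e. $u=0$ q.e. on $\{w_\O=0\}$, which gives the inclusion and so $H^1_0(\O;\Dr)=H^1_0(\{w_\O>0\};\Dr)$. Finally, when $\O$ is quasi-open one may choose $u\in H^1_0(\O;\Dr)$ with $\{u>0\}=\O$ up to capacity zero; the above then yields $\cp(\O\cap\{w_\O=0\})=0$, which combined with $\{w_\O>0\}\subset\O$ q.e. gives $\O=\{w_\O>0\}$ up to a set of zero capacity. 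The genuinely delicate point throughout is precisely this passage from almost-everywhere to quasi-everywhere vanishing, which is exactly what the $H^1$-boundedness of $(\phi_n)$ and Mazur's lemma provide.
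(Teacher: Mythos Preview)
Your argument is correct in all five parts. For (a) and (b) you do exactly what the paper indicates (variational identity plus Poincar\'e/Cauchy--Schwarz for (a), Lemma~\ref{infttybndflem} with $f\equiv1$ for (b)); you even obtain the sharper constant $1$ in (a). For (c), (d), (e) the paper gives no argument and simply refers to \cite[Proposition~2.1]{bucve}, whereas you supply self-contained proofs. Your one-sided competitor $(w_\O-t\phi)^+$ for (c), the subharmonicity of $w_\O+|x|^2/(2d)$ for (d), and the truncation $\phi_n=(u-nw_\O)^+$ combined with Mazur's lemma for (e) are precisely the standard devices used in the cited reference, so the approaches coincide in substance; the difference is only that you spell out what the paper outsources. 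The one place where your wording is slightly loose is in (c), where the contribution on $\{w_\O\le t\phi\}$ is not entirely ``manifestly non-positive'' (the term $\int_{\{0<w_\O\le t\phi\}}w_\O$ is non-negative), but you immediately address this by noting $t^{-1}\int_{\{0<w_\O\le t\phi\}}w_\O\le\|\phi\|_\infty\,|\{0<w_\O\le t\|\phi\|_\infty\}|\to0$, so the argument is complete.
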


\begin{proof}
The first claim {\it (a)} follows directly from the definition of an energy function. Claim {\it (b)} follows by Lemma \ref{infttybndflem}. The proofs of {\it (c)}, {\it (d)} and {\it (e)} are contained in \cite[Proposition 2.1]{bucve}.
\end{proof}

\begin{oss}
In particular, by condition {\it (c)} of Proposition \ref{proene} every quasi-open set $\O\subset\Dr$ of finite measure has a precise representative (up to a set of zero capacity) $\O=\{w_\O>0\}$.
\end{oss}

\section{The $\gamma$-convergence}\label{sgamma}

In this section we endow the class of admissible domains $\O\subset\Dr$ with a convergence that will be very useful for our purposes. In the case of full Dirichlet conditions on $\partial\O$ this issue has been deeply studied under the name of $\gamma$-convergence, and we refer to \cite{bubu05} for all the related details. 

In what follows we assume that $\Dr\subset\R^d$ is a connected open set satisfying \eqref{assdr}.

\begin{deff}[$\gamma$-convergence]
Let $\O_n\subset\Dr$ be a sequence of quasi-open sets of finite measure and suppose that $\O_n\ne\Dr$. We say that $\O_n$ $\gamma$-converges to the quasi-open set $\O$, if the sequence of energy functions $w_{\O_n}\in H^1_0(\O_n;\Dr)$ converges 
strongly in $L^2(\Dr)$ to the energy function $w_\O\in H^1_0(\O;\Dr)$.
\end{deff}

The $\gamma$-convergence is a widely studied subject in shape optimization especially in the purely Dirichlet case $\Dr=\R^d$ and for domains $\O_n$ contained in a fixed ball $B\subset\R^d$. In this case various equivalent definitions were given to the $\gamma$-convergence:
\begin{itemize}
\item the convergence of the energy functions $w_{\O_n}\to w_\O$ in $L^2$;
\item the operator norm convergence of the resolvents $R_{\O_n}\to R_\O$ in $\LL(L^2)$;
\item the $\Gamma$-convergence of the functionals $F_{\O_n}\to F_\O$ in $L^2$.
\end{itemize}
If the constraint $\O_n\subset B$ is dropped, then the above definitions are no more equivalent even for $\Dr=\R^d$. As we will see below, the definition through the energy functions $w_{\O_n}$ is the strongest one and implies the other two. We will briefly recall the main results in the $\gamma$-convergence theory (for more details we refer to \cite{bubu05, dm93, tesi}). 

\begin{oss}\label{tighten}
Suppose that the sequence $\O_n\subset\Dr$ $\gamma$-converges to $\O$ and that $u_n\in H^1(\Dr)$ is a sequence such that 
$$|u_n|\le w_{\O_n}\qquad \hbox{and}\qquad \|u_n\|_{H^1(\Dr)}\le 1.$$
Then $u_n$ converges strongly in $L^2(\Dr)$ to some $u\in H^1(\Dr)$. This fact simply follows by the local compactness of the inclusion $H^1(\Dr)\hookrightarrow L^2(\Dr)$ and the tightness of $u_n$, which is due to the upper bound with a strongly converging sequence. 
\end{oss}

\begin{oss}\label{bounding}
Suppose that $\O\subset\Dr$ is a quasi-open set of finite measure and let $u\in H^1_0(\O;\Dr)$ be fixed. We denote with $A_{m,\O}(u)\in H^1_0(\O;\Dr)$ the unique minimizer of the functional
$$v\mapsto \int_{\Dr}\big(|\nabla v|^2+m|u-v|^2\big)\,dx,$$
in $H^1_0(\O;\Dr)$. Using $u$ to test the minimality of $A_{m,\O}(u)$ we get 
$$\|\nabla A_{m,\O}(u)\|_{L^2}\le \|\nabla u\|_{L^2} \qquad \hbox{and}\qquad \|A_{m,\O}(u)-u\|_{L^2}\le m^{-1/2}\|\nabla u\|_{L_2},$$
which gives the strong convergence of $A_{m,\O}(u)$ to $u$ in $L^2(\Dr)$ and also in $H^1(\Dr)$. The function $A_{m,\O}(u)$ satisfies the equation
$$-\Delta A_{m,\O}(u)+m A_{m,\O}(u) =m u\ \ \hbox{in}\ \ \O,\qquad \frac{\partial A_{m,\O}(u)}{\partial n}=0\ \ \hbox{on}\ \ \partial\Dr,\qquad A_{m,\O}(u)=0\ \ \hbox{on}\ \ \partial\O\cap\Dr,$$
and so, $A_{m,\O}$ can be extended to a linear operator on $L^2(\Dr)$. Moreover, $A_{m,\O}\le m R_\O$ in sense of operators on $L^2(\Dr)$ and thus, there is a number $N$ depending only on the dimension such that, after applying $N$ times the operator $A_{m,\O}$, we get
$$\|A_{m,\O}^N(u)\|_{L^\infty}\le C \|u\|_{L^2},$$
where $C$ is a constant depending on $m$, $d$, $\Dr$ and $|\O|$.
\end{oss}

\begin{prop}
Suppose that $\O_n\subset\Dr$ is a sequence of quasi-open sets, of uniformly bounded measure $|\O_n|\le C<|\Dr|$. Then the following are equivalent:
\begin{enumerate}[(i)]
\item the sequence $\O_n$ $\gamma$-converges to a quasi-open set $\O\subset\Dr$;
\item the sequence of energy functions $w_{\O_n}\in H^1(\O_n;\Dr)$ converges strongly in $H^1(\Dr)$ to the energy function $w_\O\in H^1(\O;\Dr)$;
\item for every sequence $f_n\in L^2(\Dr)$, converging weakly in $L^2$ to some $f\in L^2(\Dr)$, we have that $R_{\O_n}(f_n)$ converges strongly in $L^2(\Dr)$ to $R_\O(f)$;
\item the sequence of operators $R_{\O_n}\in\LL(L^2(\Dr))$ converges in the operator norm $\|\cdot\|_{\LL(L^2(\Dr))}$ to $R_\O\in\LL(L^2(\Dr))$. 
\end{enumerate}
\end{prop}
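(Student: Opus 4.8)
The plan is to prove the equivalences along the scheme $(i)\Leftrightarrow(ii)$, $(i)\Leftrightarrow(iii)$ and $(iii)\Leftrightarrow(iv)$; all of these implications are soft except $(i)\Rightarrow(iii)$, which carries the whole content. Throughout I use that, by Proposition~\ref{gencondprop}(iii) and the uniform bound $|\O_n|\le C<|\Dr|$, the eigenvalues $\lambda_1(\O_n;\Dr)$ stay bounded below by a positive constant; hence the resolvents $R_{\O_n}$ are uniformly bounded, self-adjoint operators on $L^2(\Dr)$, and the energy functions $w_{\O_n}$ are uniformly bounded in $H^1(\Dr)\cap L^\infty(\Dr)$ (Propositions~\ref{proene} and \ref{unibndlbk}).

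For $(ii)\Rightarrow(i)$ there is nothing to prove. For $(i)\Rightarrow(ii)$ I test the equation for $w_{\O_n}$ with itself to get $\int_\Dr|\nabla w_{\O_n}|^2\,dx=\int_\Dr w_{\O_n}\,dx$; since $w_{\O_n}\to w_\O$ in $L^2(\Dr)$, with the $w_{\O_n}$ supported on sets of measure $\le C$, uniformly bounded and tight (Remark~\ref{tighten}), the right-hand sides converge, so $\|\nabla w_{\O_n}\|_{L^2}\to\|\nabla w_\O\|_{L^2}$; together with the weak $H^1$ convergence to $w_\O$ this upgrades to strong $H^1(\Dr)$ convergence. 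For $(iii)\Leftrightarrow(iv)$: the implication $(iv)\Rightarrow(iii)$ follows from the splitting $R_{\O_n}f_n-R_\O f=(R_{\O_n}-R_\O)f_n+R_\O(f_n-f)$, the first term going to $0$ by operator-norm convergence and boundedness of $\|f_n\|_{L^2}$, the second by compactness of $R_\O$ applied to $f_n-f\rightharpoonup0$; conversely $(iii)\Rightarrow(iv)$ is the usual contradiction argument, extracting from a would-be violation a sequence $g_n$ with $\|g_n\|=1$, $g_n\rightharpoonup g$ and $\|(R_{\O_n}-R_\O)g_n\|\ge\eps$, and observing that $(iii)$ forces $R_{\O_n}g_n\to R_\O g$ while compactness forces $R_\O g_n\to R_\O g$, a contradiction. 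Finally $(iii)\Rightarrow(i)$ is obtained by applying $(iii)$ to the constant sequences $f_n\equiv\ind_{B_R\cap\Dr}$, which gives $R_{\O_n}(\ind_{B_R\cap\Dr})\to R_\O(\ind_{B_R\cap\Dr})$ in $L^2$, and then letting $R\to\infty$ with a uniform tail estimate to recover $w_{\O_n}\to w_\O=R_\O(1)$.

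The core is $(i)\Rightarrow(iii)$. Given $f_n\rightharpoonup f$ in $L^2(\Dr)$, set $u_n:=R_{\O_n}f_n$; the energy estimate $\int_\Dr|\nabla u_n|^2\,dx=\int_\Dr f_n u_n\,dx$ and the uniform lower bound on $\lambda_1(\O_n;\Dr)$ show that $u_n$ is bounded in $H^1(\Dr)$, so up to a subsequence $u_n\rightharpoonup u$ weakly in $H^1(\Dr)$ and strongly in $L^2(\Dr)$ (tightness from Remark~\ref{tighten}). To identify $u=R_\O f$ I need the two halves of the Mosco convergence of the spaces $H^1_0(\O_n;\Dr)$ to $H^1_0(\O;\Dr)$: first, the weak closedness, i.e. that the limit $u$ of functions $u_n\in H^1_0(\O_n;\Dr)$ lies in $H^1_0(\O;\Dr)$, which encodes the correct Dirichlet condition on $\O$; second, the existence, for every $v\in H^1_0(\O;\Dr)$, of a recovery sequence $v_n\in H^1_0(\O_n;\Dr)$ with $v_n\to v$ strongly in $H^1(\Dr)$. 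Granting these, I pass to the limit in $\int_\Dr\nabla u_n\cdot\nabla v_n\,dx=\int_\Dr f_n v_n\,dx$ (weak gradient times strong gradient on the left, weak times strong on the right) to conclude that $u$ solves the equation defining $R_\O f$; by uniqueness $u=R_\O f$ and the whole sequence converges strongly.

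The main obstacle is thus deriving the Mosco convergence of the spaces $H^1_0(\O_n;\Dr)$ from the sole strong $L^2$ convergence of the energy functions, i.e. the recovery-sequence and weak-closedness properties used above; this is the genuine $\Gamma$-convergence content of the statement. The strategy I would follow is the now-classical one of Dal Maso--Mosco: the weak-closedness half rests on the pointwise description $\O=\{w_\O>0\}$ (Proposition~\ref{proene}(e)) together with the almost-everywhere and quasi-everywhere information carried by $w_{\O_n}\to w_\O$, while the recovery sequences are built by a truncation-and-smoothing procedure, for which the operators $A_{m,\O}$ of Remark~\ref{bounding} and the uniform $L^2\to L^\infty$ regularization of their iterates are the natural device. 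A recurring secondary difficulty, present whenever $\Dr$ is unbounded, is ruling out escape of mass at infinity in all the strong $L^2$ limits above (including the tail estimate needed in $(iii)\Rightarrow(i)$); this is exactly what the uniform measure bound $|\O_n|\le C$ and the domination by the strongly convergent energy functions provide, as isolated in Remark~\ref{tighten}.
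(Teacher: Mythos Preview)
Your handling of the soft implications $(i)\Leftrightarrow(ii)$ and $(iii)\Leftrightarrow(iv)$ is correct and matches the paper. For $(iii)\Rightarrow(i)$ the paper uses a slightly slicker choice, $f_n=\ind_{\O_n\cup\O}$, which gives $R_{\O_n}(f_n)=w_{\O_n}$ exactly and has a weak $L^2$ limit $f$ with $f=1$ on $\O$, so no tail argument is needed; your $\ind_{B_R}$ variant also works but costs an extra step.

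The substantive divergence is in $(i)\Rightarrow(iii)$. You propose to go through Mosco convergence of the spaces $H^1_0(\O_n;\Dr)$; the paper does \emph{not} do this, and your sketch has two genuine gaps. First, your tightness claim for $u_n=R_{\O_n}(f_n)$ invokes Remark~\ref{tighten}, but that remark requires a pointwise bound $|u_n|\le w_{\O_n}$, which simply fails for general $f_n\in L^2$. The paper repairs this by first passing to the regularizations $A_{m,\O_n}^{N+1}(u_n)$: after $N$ iterations one has an $L^\infty$ bound (Remark~\ref{bounding}), and one more application of $A_{m,\O_n}\le mR_{\O_n}$ gives $|A_{m,\O_n}^{N+1}(u_n)|\le C(m)\,w_{\O_n}$; then Remark~\ref{tighten} applies to these, and the uniform estimate $\|A_{m,\O_n}(v)-v\|_{L^2}\le m^{-1/2}\|\nabla v\|_{L^2}$ transfers the Cauchy property back to $u_n$. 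You mention $A_{m,\O}$ only for recovery sequences, not for this compactness step. Second, and more seriously, the weak-closedness half of Mosco (that weak $H^1$ limits of $u_n\in H^1_0(\O_n;\Dr)$ land in $H^1_0(\O;\Dr)$) is exactly where the content lies; your hint ``pointwise description $\O=\{w_\O>0\}$ and q.e.\ information'' is not a proof, and the natural route via $A_{m,\O_n}\to A_{m,\O}$ presupposes the resolvent convergence you are trying to establish, so it is circular here.

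The paper sidesteps Mosco entirely and identifies the limit $u$ directly. Using $-\Delta w_n=1$ in $\O_n$ and $-\Delta u_n=f_n$ in $\O_n$, one has for every $\varphi\in C^\infty_c(\Dr)$ the commutator identity
\[
\int_\Dr u_n\varphi\,dx=\int_\Dr\big(u_n\,\nabla w_n\!\cdot\!\nabla\varphi-w_n\,\nabla u_n\!\cdot\!\nabla\varphi\big)\,dx+\int_\Dr w_n\varphi f_n\,dx,
\]
which passes to the limit (strong $H^1$ for $w_n$, weak $H^1$ for $u_n$, weak $L^2$ for $f_n$). The same identity holds with $R_\O(f)$ in place of $u$, so $v:=u-R_\O(f)$ satisfies the homogeneous version; testing with the truncation $v_t=(-t)\vee v\wedge t$ and using the crucial distributional inequality $\Delta w_\O+\ind_{\{w_\O>0\}}\ge0$ from Proposition~\ref{proene}(c) yields $\tfrac12\int v_t^2+\int w_\O|\nabla v_t|^2\le0$, hence $v=0$. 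This PDE argument, not a Mosco/$\Gamma$ argument, is what carries the proof; it is absent from your proposal.
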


\begin{proof}
We first note that {\it (iii)}$\Leftrightarrow${\it (iv)} is standard and holds for a general sequence of compact operators on a Hilbert space. Thus it is sufficient to prove {\it (i)}$\Rightarrow${\it (ii)}$\Rightarrow${\it (iii)}$\Rightarrow${\it (i)}.

{\it (i)}$\Rightarrow${\it (ii)}. Due to the uniform bound of the Lebesgue measure of $\O_n$, we have a uniform bound on the norms $\|w_{\O_n}\|_{L^\infty}$ and so $w_{\O_n}$ converges to $w_\O$ also in $L^1(\Dr)$. Since using the equation we have 
$$\int_{\Dr}|\nabla w_{\O_n}|^2\,dx=\int_{\Dr}w_{\O_n}\,dx\to \int_{\Dr}w_{\O}\,dx=\int_{\Dr}|\nabla w_\O|^2\,dx,$$ 
which gives the strong convergence of the energy functions in $H^1(\Dr)$.

{\it (ii)}$\Rightarrow${\it (iii)}. We set for simplicity $\quad w_n=w_{\O_n},\quad w=w_\O\quad\hbox{and}\quad u_n=R_{\O_n}(f_n).$ We first note that $u_n$ converges strongly in $L^2(\Dr)$. In fact, by Remark \ref{bounding} and the maximum principle we get that for fixed $m>0$ the sequence $A_{m,\O_n}^{M+1}(u_n)$ is bounded (up to a constant depending on $m$ and $|\O_n|$) by $w_{\O_n}$. Thus, by Remark \ref{tighten}, it is a Cauchy sequence in $L^2(\Dr)$. Choosing $m$ large enough and observing that $\|u_n\|_{H^1(\Dr)}$ is bounded we get that $u_n$ is also a Cauchy sequence in $L^2(\Dr)$, converging strongly to some $u\in H^1(\Dr)$. We will now prove that $u=R_\O(f)$. Indeed, for every $\vf\in C^\infty_c(\Dr)$, we have
\[\begin{split}
\ds\int_{\Dr}u_n\vf\,dx&\ds=\int_{\Dr}\nabla w_n\cdot\nabla(u_n\vf)\,dx\\
&\ds=\int_{\Dr}\big(u_n\nabla w_n\cdot\nabla\vf-w_n\nabla u_n\cdot\nabla\vf\big)\,dx+\int_{\Dr}\nabla (w_n\vf)\cdot\nabla u_n\,dx\\
&\ds=\int_{\Dr}\big(u_n\nabla w_n\cdot\nabla\vf-w_n\nabla u_n\cdot\nabla\vf\big)\,dx+\int_{\Dr}w_n\vf f_n\,dx.
\end{split}\]
Passing to the limit as $n\to\infty$, we have
\be\label{Ralsosat}
\begin{array}{ll}
\ds\int_{\Dr}u\vf\,dx=\int_{\Dr}\big(u\nabla w\cdot\nabla\vf-w\nabla u\cdot\nabla\vf\big)\,dx+\int_{\Dr}w\vf f\,dx.
\end{array}
\ee
On the other hand, $R_\O(f)$ also satisfies \eqref{Ralsosat} and so, taking $v=u-R_\O(f)$, we have 
$$\int_{\Dr}v\vf\,dx=\int_{\Dr}\big(v\nabla w\cdot\nabla\vf-w\nabla v\cdot\nabla\vf\big)\,dx,\qquad\forall\vf\in C^\infty(\Dr),$$
which can be extended for test functions $\vf\in H^1(\R^d)$. Taking $v_t:=-t\vee v\wedge t$, as a test function, we get
$$\int_{\Dr}v_t^2\,dx\ds\le\int_{\Dr}\frac12\nabla w\cdot\nabla (v_t^2)-w|\nabla v_t|^2\,dx\le\frac12\int_{\Dr}v_t^2\,dx-\int_{\Dr}w|\nabla v_t|^2\,dx,$$
where we used that $\Delta w+1\ge 0$ on $\Dr$. In conclusion, we have
$$\frac12\int_{\Dr}v_t^2\,dx+\int_{\Dr}w|\nabla v_t|^2\,dx\le 0,$$
which gives $v_t=0$. Since $t>0$ is arbitrary, we obtain $u=R_\O(f)$, which concludes the proof of the implication {\it (ii)}$\Rightarrow${\it (iii)}.

{\it (iii)}$\Rightarrow${\it (i)}. Consider the sequence $f_n=\ind_{\O_n\cup\O}$. Since $f_n$ is bounded in $L^2(\Dr)$, we can suppose that, up to a subsequence $f_n$ converges weakly in $L^2(\Dr)$ to some $f\in L^2(\Dr)$. Moreover, we have that $0\le f\le 1$ and $f\ge \ind_\O$, since $f_n\ge \ind_\O$ for every $n\ge 1$. Thus, $f=1$ on $\O$ and $f_n=1$ on $\O_n$ and so we have that $w_{\O_n}=R_{\O_n}(f_n)$ converges strongly in $L^2(\Dr)$ to $w_\O=R_\O(f)$.
\end{proof}

Since the spectrum of compact operators is continuous with respect to the norm convergence, we have the following result.

\begin{cor}
Suppose that $\O_n\subset\Dr$ is a sequence of quasi-open sets, of uniformly bounded measure $|\O_n|\le C<|\Dr|$, which $\gamma$-converges to a quasi-open set $\O\subset\Dr$. Then, for every $k\in\N$ we have that the functional $\lambda_k(\cdot;\Dr)$ is continuous: 
$$\lim_{n\to\infty}\lambda_k(\O_n;\Dr)=\lambda_k(\O;\Dr).$$ 
\end{cor}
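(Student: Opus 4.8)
The plan is to reduce the statement to the spectral stability of compact self-adjoint operators under operator-norm convergence, exactly as the sentence preceding the Corollary suggests. First I would record the correspondence between the two spectra: for a fixed admissible $\O$ the strictly positive eigenvalues of the compact self-adjoint resolvent $R_\O\in\LL(L^2(\Dr))$, listed in decreasing order and repeated according to multiplicity, are precisely the reciprocals $\lambda_k(\O;\Dr)^{-1}$, by the very definition of $\lambda_k(\O;\Dr)$ as the inverse of the $k$-th element of the spectrum of $R_\O$ given above. Since $|\O|<|\Dr|$, the embedding $H^1_0(\O;\Dr)\hookrightarrow L^2(\Dr)$ is compact by Proposition \ref{gencondprop}, so the restriction of $R_\O$ to the $L^2$-closure of $H^1_0(\O;\Dr)$ is an injective, positive, compact operator on an infinite-dimensional space; hence $R_\O$ has infinitely many strictly positive eigenvalues accumulating only at $0$, and writing $\mu_k(R_\O)$ for the $k$-th largest eigenvalue of $R_\O$ we have $\mu_k(R_\O)=\lambda_k(\O;\Dr)^{-1}>0$ for every $k$ (in the degenerate case $|\O|=0$ one has $\mu_k(R_\O)=0$ and $\lambda_k(\O;\Dr)=+\infty$, which is handled by the same estimate below with the extended-real convention).

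Next I would invoke the $\gamma$-convergence hypothesis through the preceding Proposition: the equivalence of $\gamma$-convergence with the convergence of the resolvents in the operator norm gives
$$\|R_{\O_n}-R_\O\|_{\LL(L^2(\Dr))}\longrightarrow0.$$
The central step is then the standard spectral-stability estimate for compact self-adjoint operators. Using the Courant--Fischer min-max characterization of $\mu_k$ for a positive compact self-adjoint operator one obtains the one-Lipschitz bound
$$|\mu_k(R_{\O_n})-\mu_k(R_\O)|\le\|R_{\O_n}-R_\O\|_{\LL(L^2(\Dr))},$$
so that $\mu_k(R_{\O_n})\to\mu_k(R_\O)$ for every fixed $k$.

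Finally I would pass back to the eigenvalues by inverting. Because $\mu_k(R_\O)=\lambda_k(\O;\Dr)^{-1}>0$ by the first step, and $\mu_k(R_{\O_n})\to\mu_k(R_\O)>0$, the reciprocal is continuous at the limit, whence
$$\lambda_k(\O_n;\Dr)=\mu_k(R_{\O_n})^{-1}\longrightarrow\mu_k(R_\O)^{-1}=\lambda_k(\O;\Dr).$$
I do not expect a genuine obstacle, since the hard analytic content — the operator-norm convergence of the resolvents — is already furnished by the preceding Proposition; the only point deserving care is the inversion, namely guaranteeing $\mu_k(R_\O)>0$ (equivalently $\lambda_k(\O;\Dr)<+\infty$) so that reciprocals behave continuously, which is exactly what the compact embedding under $|\O|<|\Dr|$ provides. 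Were one to argue instead directly from the variational formula for $\lambda_k(\O;\Dr)$, the difficulty would shift to comparing $k$-dimensional subspaces of the varying spaces $H^1_0(\O_n;\Dr)$, precisely the complication that the passage through $R_{\O_n}\to R_\O$ circumvents.
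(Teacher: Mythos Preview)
Your proposal is correct and follows exactly the route the paper indicates: the paper does not give an explicit proof but states just before the Corollary that ``the spectrum of compact operators is continuous with respect to the norm convergence,'' and your argument simply unpacks this --- operator-norm convergence of $R_{\O_n}$ from the preceding Proposition, the min--max Lipschitz bound $|\mu_k(R_{\O_n})-\mu_k(R_\O)|\le\|R_{\O_n}-R_\O\|$, and inversion. The only small point you assert without justification is $|\O|<|\Dr|$, but this follows from the lower semicontinuity of the measure under (weak-)$\gamma$-convergence, so there is no genuine gap.
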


\subsection{$\gamma$-convergence of quasi-open sets and $\Gamma$-convergence of the associated functionals}

\begin{deff}\label{gtilde}
We say that the sequence of functionals $F_n:L^2(\Dr)\to[0,+\infty]$ $\Gamma$-converges in $L^2(\Dr)$ to the functional $F:L^2(\Dr)\to[0,+\infty]$, if
\begin{itemize}
\item[i)] for every $u_n\to u$ in $L^2(\Dr)$ we have
$$F(u)\le\liminf_n F_{n}(u_n);$$
\item[ii)] for every $u\in L^2(\Dr)$ there exists $u_n\to u$ in $L^2(\R^d)$ such that
$$F(u)=\lim_n F_{n}(u_n).$$
\end{itemize}
\end{deff}

To each quasi-open set $\O\subset\Dr$ we associate the functional $F_\O:L^2(\Dr)\to[0,+\infty]$ defined as
$$F_{\O}(u)=\begin{cases}
\ds\int_{\Dr}|\nabla u|^2\,dx&\hbox{if }u\in H^1_0(\O;\Dr);\\
+\infty&\hbox{otherwise}.
\end{cases}$$

\begin{prop}
Suppose that $\O_n\subset\Dr$, for $n\in\N$, and $\O\subset\Dr$ are quasi-open sets of uniformly bounded measure $|\O_n|\le C<|\Dr|$. Then $F_{\O_n}$ $\Gamma$-converges in $L^2(\Dr)$ to $F_\O$, if and only if, $R_{\O_n}$ converges strongly in $L^2(\Dr)$ to $R_\O$.
\end{prop}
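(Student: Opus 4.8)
The plan is to reduce everything to the characterization of $\gamma$-convergence proved above and to exploit the variational description of the resolvent: for $f\in L^2(\Dr)$ the function $R_\O(f)$ is the unique minimizer over $L^2(\Dr)$ of $G^f_\O(v):=\frac12 F_\O(v)-\int_\Dr fv\,dx$. Adding the $L^2$-continuous term $-\int_\Dr fv\,dx$ does not affect $\Gamma$-convergence, so $F_{\O_n}$ $\Gamma$-converges to $F_\O$ if and only if $G^f_{\O_n}$ $\Gamma$-converges to $G^f_\O$ for every $f$. I would prove the two implications separately, keeping in mind that the strong $L^2$-convergence of $R_{\O_n}$ applied to $f\equiv1$ is exactly the $\gamma$-convergence $w_{\O_n}\to w_\O$, which by the previous Proposition upgrades to strong convergence of the energy functions in $H^1(\Dr)$.

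First I would show that $\Gamma$-convergence implies resolvent convergence. Fix $f\in L^\infty(\Dr)$. By the maximum principle $|R_{\O_n}(f)|\le\|f\|_\infty w_{\O_n}$, and since the $w_{\O_n}$ converge strongly in $L^2(\Dr)$ while $\|R_{\O_n}(f)\|_{H^1}$ stays bounded (uniform Poincar\'e from Proposition \ref{gencondprop}), Remark \ref{tighten} produces a subsequence with $R_{\O_n}(f)\to z$ strongly in $L^2(\Dr)$. To identify $z$ I compare energies: with the recovery sequence $\zeta_n\to R_\O(f)$ one has $G^f_{\O_n}(R_{\O_n}(f))\le G^f_{\O_n}(\zeta_n)\to G^f_\O(R_\O(f))$, while the $\liminf$ inequality gives $G^f_\O(z)\le\liminf_n G^f_{\O_n}(R_{\O_n}(f))$; since $R_\O(f)$ is the unique minimizer of $G^f_\O$, this forces $z=R_\O(f)$. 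The uniform bound $\|R_{\O_n}\|_{\LL(L^2)}\le\lambda_1(\O_n;\Dr)^{-1}\le C$ from Proposition \ref{gencondprop} then extends the convergence from the dense class $L^\infty(\Dr)$ to all of $L^2(\Dr)$.

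For the converse I must produce the two $\Gamma$-convergence inequalities from resolvent convergence. The recovery ($\limsup$) inequality is the easy half: the range $R_\O\big(L^2(\Dr)\big)$ is dense in $H^1_0(\O;\Dr)$ for the $H^1$-norm, so it suffices to treat $u=R_\O(g)$, for which $u_n:=R_{\O_n}(g)\to u$ in $L^2(\Dr)$ and $\int_\Dr|\nabla u_n|^2\,dx=\int_\Dr g\,u_n\,dx\to\int_\Dr g\,u\,dx=\int_\Dr|\nabla u|^2\,dx$; a diagonal argument then covers a general $u\in H^1_0(\O;\Dr)$. The $\liminf$ inequality is where the real work lies. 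Given $u_n\to u$ in $L^2(\Dr)$ with $\sup_n\int_\Dr|\nabla u_n|^2\,dx<\infty$, the sequence is bounded in $H^1(\Dr)$, hence $u_n\rightharpoonup u$ weakly in $H^1(\Dr)$ and lower semicontinuity of the Dirichlet energy yields $\int_\Dr|\nabla u|^2\,dx\le\liminf_n\int_\Dr|\nabla u_n|^2\,dx$; the genuine difficulty is to show that the limit still obeys the constraint, namely $u\in H^1_0(\O;\Dr)$.

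The hard part will therefore be this membership, and it is exactly where the \emph{strong} $H^1$-convergence of the energy functions is indispensable. After reducing to $0\le u_n\le T$ bounded (by truncation and by splitting into positive and negative parts, using that $H^1_0(\O;\Dr)$ is closed), I would test the equation $-\Delta w_{\O_n}=1$ on $\O_n$ against $u_n\vf\in H^1_0(\O_n;\Dr)$, with $\vf\in C^\infty_c(\Dr)$, to obtain $\int_\Dr\vf\,\nabla w_{\O_n}\cdot\nabla u_n\,dx+\int_\Dr u_n\,\nabla w_{\O_n}\cdot\nabla\vf\,dx=\int_\Dr u_n\vf\,dx$. Passing to the limit is legitimate precisely because $\nabla w_{\O_n}\to\nabla w_\O$ strongly in $L^2(\Dr)$, so the term containing the only weakly convergent $\nabla u_n$ still passes to the limit, giving $\int_\Dr\nabla w_\O\cdot\nabla(u\vf)\,dx=\int_\Dr u\vf\,dx$. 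Pairing this identity with the subsolution property $\Delta w_\O+\ind_{\{w_\O>0\}}\ge0$ of Proposition \ref{proene} forces $\int_{\{w_\O=0\}}u\vf\,dx\le0$ for every $\vf\ge0$, hence $u=0$ on $\{w_\O=0\}$; recalling $H^1_0(\O;\Dr)=H^1_0(\{w_\O>0\};\Dr)$ from Proposition \ref{proene} this gives $u\in H^1_0(\O;\Dr)$ and closes the $\liminf$ inequality. The one delicate point I expect to require extra care is upgrading the vanishing of $u$ on $\{w_\O=0\}$ from almost-everywhere to quasi-everywhere, which is handled by using that the measure $\Delta w_\O+\ind_{\{w_\O>0\}}$ does not charge sets of zero capacity.
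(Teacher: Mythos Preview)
Your plan has a genuine gap that affects both directions, and it stems from a single mistaken identification: you write that ``the strong $L^2$-convergence of $R_{\O_n}$ applied to $f\equiv1$ is exactly the $\gamma$-convergence $w_{\O_n}\to w_\O$''. When $\Dr$ is unbounded the constant function $1$ is not in $L^2(\Dr)$, so strong convergence of $R_{\O_n}$ on $L^2$ says nothing about $w_{\O_n}=R_{\O_n}(\ind_{\O_n})$, whose argument varies with $n$. In fact strong resolvent convergence does \emph{not} imply $w_{\O_n}\to w_\O$ in $L^2$: take $\Dr=\R^d$ and $\O_n=x_n+B_1$ with $|x_n|\to\infty$; then $R_{\O_n}\to 0=R_\emptyset$ strongly, yet $\|w_{\O_n}\|_{L^2}=\|w_{B_1}\|_{L^2}>0$ for every $n$ (this is precisely the remark following the next proposition in the paper). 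Consequently the strong $H^1$-convergence of energy functions that you call ``indispensable'' for the $\liminf$ inequality is simply unavailable, and your argument for showing $u\in H^1_0(\O;\Dr)$ by passing to the limit in $\int_\Dr\nabla w_{\O_n}\cdot\nabla(u_n\vf)\,dx$ breaks down. The same unproved convergence of $w_{\O_n}$ is what you invoke, via Remark~\ref{tighten}, to obtain compactness of $R_{\O_n}(f)$ in the $\Gamma\Rightarrow$resolvent direction, so that implication is also incomplete.

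The paper circumvents this in both directions without ever using $w_{\O_n}\to w_\O$. For resolvent $\Rightarrow$ $\Gamma$, the $\liminf$ membership $u\in H^1_0(\O;\Dr)$ is obtained through the auxiliary operators $A_{m,\O}=mR_\O(I+mR_\O)^{-1}$: strong convergence of $R_{\O_n}$ implies $A_{m,\O_n}(u_n)\to A_{m,\O}(u)$ in $L^2$ for each fixed $m$, while $\|A_{m,\O_n}(u_n)-u_n\|_{L^2}\le m^{-1/2}\|u_n\|_{H^1}$, so in the limit $\|A_{m,\O}(u)-u\|_{L^2}\le Cm^{-1/2}$ and letting $m\to\infty$ forces $u\in H^1_0(\O;\Dr)$. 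For $\Gamma$ $\Rightarrow$ resolvent, tightness of $u_n=R_{\O_n}(f)$ is proved directly for each fixed $f\in L^2(\Dr)$ by testing the equation against $(1-\phi)^2u_n$ with a cutoff $\phi$ and using that $\|f\ind_{B_R^c}\|_{L^2}\to0$; this yields $\int_{B_{2R}^c}u_n^2\,dx\to0$ uniformly in $n$, after which your energy-comparison argument identifies the limit as $R_\O(f)$. Your recovery-sequence construction and the variational identification of limits are fine; the missing ingredient is precisely this compactness step, which must be supplied without appealing to convergence of the energy functions.
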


\begin{proof}
Suppose first that $R_{\O_n}$ converges strongly in $L^2(\Dr)$ to $R_\O$. Let $u_n\in H^1_0(\O_n)$ be a sequence of uniformly bounded $H^1(\Dr)$ norm converging in $L^2(\Dr)$ to $u\in H^1(\Dr)$. Due to the identification $A_{m,\O}=R_\O(1+mR_\O)^{-1}$, we have that $A_{m,\O_n}$ also converges strongly to $A_{m,\O}$. Thus, we have 
\[\begin{split}
\|A_{m,\O_n}(u_n)-A_{m,\O}(u)\|_{L^2}&\le \|A_{m,\O_n}(u_n)-A_{m,\O_n}(u)\|_{L^2}+\|A_{m,\O_n}(u)-A_{m,\O}(u)\|_{L^2}\\
&\le \|R_{\O_n}\|_{\LL(L^2(\Dr))}\|u_n-u\|_{L^2}+\|A_{m,\O_n}(u)-A_{m,\O}(u)\|_{L^2},
\end{split}\]
which for fixed $m>0$ gives the convergence of $A_{m,\O_n}(u_n)$ to $A_{m,\O}(u)$. Now since $\|A_{m,\O_n}(u_n)-u_n\|_{L^2}\le m^{-1/2}\|u_n\|_{H^1(\Dr)}$, passing to the limit as $n\to\infty$, we get $\|A_{m,\O}(u)-u\|_{L^2}\le m^{-1/2} C$, which gives that $u\in H^1_0(\O;\Dr)$.

On the other hand, let $u\in H^1_0(\O;\Dr)$. Then we have that $A_{m,\O}(u)\to u$ in $H^1(\Dr)$, as $m\to\infty$. By the strong convergence of the resolvents we have $A_{m,\O_n}(u)\to A_{m,\O}(u)$ in $L^2(\Dr)$ for every fixed $m$ and $n\to\infty$. Using the equations for $A_{m,\O_n}(u)$ and $A_{m,\O}$ we have also that $\|A_{m,\O_n}(u)\|_{H^1(\Dr)}\to \|A_{m,\O}(u)\|_{H^1(\Dr)}$. Thus, it is sufficient to extract a diagonal sequence $A_{m_n,\O_n}(u)$ converging to $u$ in $H^1(\Dr)$.

Suppose now that $F_{\O_n}$ $\Gamma$-converges in $L^2(\Dr)$ to $F_\O$ and let $f\in L^2(\Dr)$ be a given function. Setting $u_n=R_{\O_n}(f)$, we get that $u_n$ is bounded in $H^1(\Dr)$ and so it converges in $L^2_{loc}$ to a function $u\in L^2_{loc}(\Dr)$. Moreover, using the equation for $u_n$ we have
$$\int_{\Dr}|\nabla ((1-\phi)u_n)|^2\,dx=\int_{\Dr}|\nabla \phi|^2 u_n^2\,dx+\int_{\Dr} u_n(1-\phi)^2 f\,dx,\qquad \forall\phi\in C^{0,1}_c(\R^d).$$
Now choosing $\phi$ to be $1$ in $B_R$, $0$ in $B_{2R}^c$ and harmonic in $B_{2R}\setminus B_R$, one has that
$$\int_{B_{2R}^c}u_n^2\,dx\le \lambda_1(\O_n;\Dr)^{-1}\int_{\Dr}|\nabla ((1-\phi)u_n)|^2\,dx\le\lambda_1(\O_n;\Dr)^{-1}\Big( \frac{C_d}{R^2}\|u_n\|_{L^2}^2+\|u_n\|_{L^2}\| f\ind_{B_{R}^c}\|_{L^2}\Big),$$
which gives that $u_n$ converges to $u\in H^1(\Dr)$ strongly in $L^2(\Dr)$. By the $\Gamma$-convergence of the functionals we have that $u\in H^1_0(\O;\Dr)$ and so it remains to prove that $u=R_\O(f)$. Indeed, for every $v\in H^1_0(\O;\Dr)$ there is a sequence $v_n\in H^1_0(\O_n;\Dr)$ such that
\[\begin{split}
\frac12\int_{\Dr}|\nabla v|^2\,dx-\int_{\Dr}vf\,dx&=\lim_{n\to\infty}\Big\{\frac12\int_{\Dr}|\nabla v_n|^2\,dx-\int_{\Dr}v_nf\,dx\Big\} \\ 
&\ge \liminf_{n\to\infty}\Big\{\frac12\int_{\Dr}|\nabla u_n|^2\,dx-\int_{\Dr}u_nf\,dx\Big\} \\
&\ge \liminf_{n\to\infty}\Big\{\frac12\int_{\Dr}|\nabla u|^2\,dx-\int_{\Dr}uf\,dx\Big\},
\end{split}\]
where we used the minimality of $u_n$ in the first inequality.
\end{proof}

\begin{prop}\label{omega0}
Suppose that $\O\subset\Dr$ and $\O_n\subset\Dr$, for $n\in\N$, are quasi-open sets, all contained in a quasi-open set of finite measure $\O_0\subset\Dr$ with $|\O_0|<|\Dr|$. Then the following are equivalent:
\begin{enumerate}[(i)]
\item $\O_n$ $\gamma$-converges to $\O$;
\item the sequence of resolvents $R_{\O_n}\in\LL(L^2(\Dr))$ converges in the operator norm to $R_\O\in\LL(L^2(\Dr))$;
\item the sequence of resolvents $R_{\O_n}\in\LL(L^2(\Dr))$ converges strongly in $L^2(\Dr)$ to $R_{\O}\in\LL(L^2(\Dr))$;
\item the sequence of functionals $F_{\O_n}$ $\Gamma$-converges in $L^2(\Dr)$ to $F_{\O}$. 
\end{enumerate}
\end{prop}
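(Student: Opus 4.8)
The plan is to reduce the statement to the two propositions already proved in this section, supplemented by one genuinely new compactness argument. Since $\O\subset\O_0$ and $\O_n\subset\O_0$ with $|\O_0|<|\Dr|$, the measures $|\O_n|$ are all bounded by $|\O_0|<|\Dr|$, so the hypotheses of both previous propositions are satisfied. The first proposition of this section gives the equivalence of {\it (i)} with the operator-norm convergence of the resolvents, that is {\it (i)}$\Leftrightarrow${\it (ii)}; the proposition on $\Gamma$-convergence gives {\it (iii)}$\Leftrightarrow${\it (iv)}. Moreover {\it (ii)}$\Rightarrow${\it (iii)} is immediate, since norm convergence in $\LL(L^2(\Dr))$ implies strong convergence. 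Thus the only implication left to prove is {\it (iii)}$\Rightarrow${\it (ii)}, namely that under the common confinement $\O_n\subset\O_0$ the strong convergence of the resolvents actually forces convergence in the operator norm.

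First I would record the operator domination produced by the confinement. From $\O_n\subset\O_0$ and $\O\subset\O_0$ we get $H^1_0(\O_n;\Dr)\subset H^1_0(\O_0;\Dr)$ and $H^1_0(\O;\Dr)\subset H^1_0(\O_0;\Dr)$, and from the variational identity
$$\langle f,R_\O f\rangle_{L^2(\Dr)}=\max\Big\{2\int_\Dr fv\,dx-\int_\Dr|\nabla v|^2\,dx\ :\ v\in H^1_0(\O;\Dr)\Big\},$$
obtained by evaluating the minimum value of $J_f$, it follows that enlarging the admissible space increases this quadratic form. Hence $0\le R_{\O_n}\le R_{\O_0}$ and $0\le R_\O\le R_{\O_0}$ in the sense of positive self-adjoint operators, where $K:=R_{\O_0}$ is compact by Proposition \ref{gencondprop}.

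The core step is then an equi-small-tail estimate. I would let $P_N$ be the orthogonal projection onto the span of the first $N$ eigenvectors of $K$ and set $Q_N=I-P_N$; since $K$ is compact its eigenvalues tend to $0$, so $\|KQ_N\|_{\LL(L^2(\Dr))}\to0$ as $N\to\infty$. For any self-adjoint $T$ with $0\le T\le K$ one has $\|T^{1/2}Q_N\|^2=\|Q_NTQ_N\|\le\|Q_NKQ_N\|\le\|KQ_N\|$, and therefore, writing $TQ_N=T^{1/2}\,(T^{1/2}Q_N)$,
$$\|TQ_N\|=\|Q_NT\|\le\|K\|^{1/2}\,\|KQ_N\|^{1/2}.$$
This bound is uniform over all such $T$, in particular over $T=R_{\O_n}$ and $T=R_\O$. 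Decomposing $R_{\O_n}-R_\O$ according to $I=P_N+Q_N$, the three blocks involving $Q_N$ become smaller than a prescribed $\eps$, uniformly in $n$, as soon as $N$ is large; the remaining block $P_N(R_{\O_n}-R_\O)P_N$ has rank at most $N$, and since $R_{\O_n}\to R_\O$ strongly it tends to $0$ in norm for each fixed $N$. An $\eps$-argument then yields $\|R_{\O_n}-R_\O\|_{\LL(L^2(\Dr))}\to0$, which is {\it (ii)}.

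The main obstacle is precisely this last upgrade from strong to norm convergence: in general it is false, and here it is saved only by the uniform domination of all the $R_{\O_n}$ by the single compact operator $R_{\O_0}$, which is exactly what the confinement $\O_n\subset\O_0$ provides and what distinguishes this statement from the first proposition of the section. Passing from the form inequality $T\le K$ to the block bound on $\|TQ_N\|$ rests on the positivity and self-adjointness of the resolvents, so that $T^{1/2}$ is available and $\|Q_NTQ_N\|\le\|Q_NKQ_N\|$; this, together with the compactness of $R_{\O_0}$, is the one point that must be handled with care.
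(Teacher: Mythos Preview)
Your argument is correct, and it closes the cycle of implications at a different link than the paper does. The paper, having already established {\it (i)}$\Leftrightarrow${\it (ii)}$\Rightarrow${\it (iii)}$\Leftrightarrow${\it (iv)}, proves {\it (iv)}$\Rightarrow${\it (i)} directly: the confinement $\O_n\subset\O_0$ gives the compact embedding $H^1_0(\O_0;\Dr)\hookrightarrow L^2(\Dr)$, so the energy functions $w_{\O_n}$ (which are bounded in $H^1$) converge in $L^2(\Dr)$ along a subsequence to some $w$; the $\Gamma$-convergence hypothesis then identifies $w$ with $w_\O$ via the variational characterisation of the energy function, exactly as in the preceding proof. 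You instead prove {\it (iii)}$\Rightarrow${\it (ii)} by an abstract operator-theoretic route: the confinement translates into the form inequality $0\le R_{\O_n}\le R_{\O_0}=:K$ with $K$ compact, and the square-root trick gives the uniform tail bound $\|R_{\O_n}Q_N\|\le\|K\|^{1/2}\|KQ_N\|^{1/2}$, after which the finite-rank block converges by strong convergence. Both uses of the hypothesis $\O_n\subset\O_0$ are ultimately the same compactness, read either through the embedding or through the resolvent. The paper's route is a couple of lines shorter and stays within the variational language already set up; your route isolates a clean general principle (a strongly convergent sequence of positive operators dominated by a single compact operator converges in norm) that is independent of the PDE structure and could be quoted elsewhere.
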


\begin{proof}
We already have that {\it (i)}$\Leftrightarrow${\it (ii)}$\Rightarrow${\it (iii)}$\Leftrightarrow${\it (iv)}. Thus it is sufficient to check that {\it (iv)}$\Rightarrow${\it (i)}. Indeed, let $w_n=w_{\O_n}$ be the sequence of energy functions of $\O_n$. By the uniform bound on $|\O_n|$ we have that $\|w_{n}\|_{H^1(\Dr)}\le C$ and, by the compact inclusion $H^1_0(\O_0;\Dr)\hookrightarrow L^2(\Dr)$ we can suppose that $w_n$ converges in $L^2(\Dr)$ to some $w\in H^1_0(\O_0;\Dr)$. By the $\Gamma$-convergence of $F_{\O_n}$ we have that $w\in H^1_0(\O;\Dr)$ and so it remains to prove that $w=w_\O$. Indeed, for every $v\in H^1_0(\O;\Dr)$ there is a sequence $v_n\in H^1_0(\O_n;\Dr)$ such that
\[\begin{split}
\frac12\int_{\Dr}|\nabla v|^2\,dx-\int_{\Dr}v\,dx&=\lim_{n\to\infty}\Big\{\frac12\int_{\Dr}|\nabla v_n|^2\,dx-\int_{\Dr}v_n\,dx\Big\} \\ 
&\ge \liminf_{n\to\infty}\Big\{\frac12\int_{\Dr}|\nabla w_n|^2\,dx-\int_{\Dr}w_n\,dx\Big\}\\
&\ge \liminf_{n\to\infty}\Big\{\frac12\int_{\Dr}|\nabla w|^2\,dx-\int_{\Dr}w\,dx\Big\},
\end{split}\]
which concludes the proof.
\end{proof}

\begin{oss}
We note that without the equiboundedness assumption $\O_n\subset\O_0$, the implication {\it (iii)}$\Rightarrow${\it (ii)} of Proposition \ref{omega0} may fail to be true. Take for instance $\Dr=\R^d$ and $\O_n=x_n+B_1$ with $|x_n|\to +\infty$. It is easy to see that $R_{\O_n}$ converges strongly in $L^2(\R^d)$ to zero, while 
$$\|R_{\O_n}\|_{\LL(L^2(\R^d);L^2(\R^d))}=\frac{1}{\lambda_1(B_1)}.$$
\end{oss}

\subsection{The weak-$\gamma$-convergence.}

\begin{deff}[weak-$\gamma$-convergence]
Let $\O_n\subset\Dr$ be a sequence of quasi-open sets of finite measure such that $|\O_n|<|\Dr|$. We say that $\O_n$ weak-$\gamma$-converges to the quasi-open set $\O$, if the sequence of energy functions $w_{\O_n}\in H^1_0(\O_n;\Dr)$ converges strongly in $L^2(\Dr)$ to a function $w\in H^1(\Dr)$ and $\O=\{w>0\}$ quasi-everywhere.
\end{deff}

By definition and the maximum principle $\O=\{w_\O>0\}$, we have that a $\gamma$-converging sequence $\O_n$ to $\O$ is also weak-$\gamma$-converging to $\O$. The converse is not true since an additional term may appear in the equation for the limit function $w$ (for a precise examples we refer to the book \cite{bubu05}). Nevertheless, one can obtain a sequence of quasi-open sets $\gamma$-converging to $\O$ simply by enlarging each of the sets $\O_n$. More precisely, the following proposition holds.

\begin{prop}\label{gwgprop}
Let $\O_n\subset\Dr$ be a sequence of quasi-open sets weak-$\gamma$-converging to a quasi-open set $\O\subset\Dr$. Then there is a sequence $\widetilde\O_n\subset\Dr$ such that $\O_n\subset\widetilde\O_n$ and $\widetilde\O_n$ $\gamma$-converges to $\,\O$. Moreover, if $\O_0\subset\Dr$ is a fixed quasi-open set such that $\O_n\subset \O_0$, for $n\ge1$, then the sequence $\widetilde\O_n$ can be chosen such that $\widetilde\O_n\subset\O_0$.
\end{prop}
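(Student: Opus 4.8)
The plan is to prove the statement with the explicit choice $\widetilde\O_n:=\O_n\cup\O$, for which the inclusion $\O_n\subseteq\widetilde\O_n$ is automatic, and to show that $\widetilde\O_n$ $\gamma$-converges to $\O$, i.e. that $w_{\widetilde\O_n}\to w_\O$ strongly in $L^2(\Dr)$. First I would record the containment needed for the last assertion: since $\O_n\subseteq\O_0$ implies $w_{\O_n}\le w_{\O_0}$, passing to the $L^2$-limit gives $w\le w_{\O_0}$, whence $\O=\{w>0\}\subseteq\{w_{\O_0}>0\}=\O_0$ by Proposition \ref{proene}(e); thus $\widetilde\O_n=\O_n\cup\O\subseteq\O_0$. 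This also shows $\O$ is an admissible quasi-open set of finite measure, so that $w_{\widetilde\O_n}$ and $w_\O$ are well defined and, by the uniform bound on $|\widetilde\O_n|$ together with Proposition \ref{proene}, uniformly bounded in $H^1(\Dr)\cap L^\infty(\Dr)$.

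One inequality is then free. Because $\widetilde\O_n\supseteq\O$, the monotonicity of the energy function gives $w_{\widetilde\O_n}\ge w_\O\ge 0$ quasi-everywhere. Extracting a subsequence, I may assume $w_{\widetilde\O_n}\rightharpoonup\bar w$ weakly in $H^1(\Dr)$ and strongly in $L^2(\Dr)$ (the compactness coming from \eqref{assdr} and the tightness from the uniform $L^\infty$-bound on sets of bounded measure), so that $\bar w\ge w_\O$. It then suffices to prove the reverse inequality $\bar w\le w_\O$. Granting that $\bar w\in H^1_0(\O;\Dr)$, this is elementary: by Proposition \ref{proene}(c) each $w_{\widetilde\O_n}$ satisfies $-\Delta w_{\widetilde\O_n}\le 1$, a property stable under weak $H^1$-convergence, so $\bar w$ is a subsolution of $-\Delta u=1$ lying in $H^1_0(\O;\Dr)$; testing $-\Delta(\bar w-w_\O)\le 0$ against $(\bar w-w_\O)^+\in H^1_0(\O;\Dr)$ forces $(\bar w-w_\O)^+=0$. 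Hence $\bar w=w_\O$, and since the limit is independent of the subsequence the whole sequence converges.

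The heart of the matter is therefore the single claim $\bar w\in H^1_0(\O;\Dr)$, equivalently $\bar w=0$ quasi-everywhere on $\Dr\setminus\O$; this is exactly where the weak-$\gamma$-convergence hypothesis enters and where the Cioranescu--Murat ``strange term'' must be excluded on the exterior of $\O$. I would phrase it through the capacitary-measure formalism of \cite{dm93, bubu05}: writing $\mu_{\O_n}$ for the capacitary measure $\infty\cdot\ind_{\Dr\setminus\O_n}$ associated with $\O_n$, weak-$\gamma$-convergence means that, up to a subsequence, $\mu_{\O_n}$ $\gamma$-converges to a capacitary measure $\mu$ with $w=w_\mu$ and $\O=\{w>0\}$; the characterization of the regular set of $\mu$ then gives $\mu=+\infty$ quasi-everywhere on $\Dr\setminus\O$, i.e. $\mu\ge\infty\cdot\ind_{\Dr\setminus\O}$. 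Since $\mu_{\widetilde\O_n}=\mu_{\O_n}\wedge(\infty\cdot\ind_{\Dr\setminus\O})$ (as $\mu_{A\cup B}=\mu_A\wedge\mu_B$), the stability of $\gamma$-convergence under taking the lattice infimum with a fixed capacitary measure yields
$$\mu_{\widetilde\O_n}\ \xrightarrow{\ \gamma\ }\ \mu\wedge(\infty\cdot\ind_{\Dr\setminus\O})=\infty\cdot\ind_{\Dr\setminus\O},$$
the last equality because $\mu\ge\infty\cdot\ind_{\Dr\setminus\O}$. By definition this is precisely the $\gamma$-convergence of $\widetilde\O_n$ to $\O$, and in particular $\bar w=w_\O$, closing the argument.

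I expect the only genuinely delicate point to be this last stability property — equivalently, the direct statement that any $L^2$-limit $u$ of functions $u_n\in H^1_0(\widetilde\O_n;\Dr)$ with $\sup_n\|u_n\|_{H^1(\Dr)}<\infty$ must vanish on $\Dr\setminus\O$. A purely variational route would go through Proposition \ref{omega0}, reducing $\gamma$-convergence of $\widetilde\O_n$ to the $\Gamma$-convergence $F_{\widetilde\O_n}\to F_\O$ in $L^2(\Dr)$: there the $\Gamma$-$\limsup$ inequality is trivial (the constant recovery sequence $u_n=u$ works since $\widetilde\O_n\supseteq\O$), and all the content sits in the $\Gamma$-$\liminf$ inequality, namely the above vanishing statement. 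The obstruction to a naive truncation argument is that truncating $u_n$ by multiples of $w_{\O_n}$ — the natural way to land in $H^1_0(\O_n;\Dr)$ and invoke weak-$\gamma$-convergence of $\O_n$ — automatically annihilates $u$ on $\{w=0\}=\Dr\setminus\O$ and so yields no information there; this is precisely why one is led to the measure-theoretic formulation, in which the exterior strange term is controlled globally rather than locally.
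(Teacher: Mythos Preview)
The paper does not give its own proof of this proposition; it simply records that ``the same proof can be repeated, step by step'' from \cite{bubu05,but10,buve}. Your proposal therefore cannot be compared line-by-line against an in-paper argument, but it can be compared against what those references actually do, and on that score it is essentially the standard route: the explicit choice $\widetilde\O_n=\O_n\cup\O$ is exactly the one used in the literature, the reduction to showing $\bar w\in H^1_0(\O;\Dr)$ is the right decomposition, and the capacitary-measure translation $\mu_{\widetilde\O_n}=\mu_{\O_n}\wedge\mu_\O$ together with $\mu\ge\mu_\O$ (from $\O=A_\mu$) is precisely how the references close the argument. Your treatment of the $\O_0$-constraint and of the comparison step ($\bar w\le w_\O$ once $\bar w\in H^1_0(\O;\Dr)$) is clean; in fact the latter can be sharpened, since for every $\phi\in H^1_0(\O;\Dr)\subset H^1_0(\widetilde\O_n;\Dr)$ one has $\int\nabla w_{\widetilde\O_n}\cdot\nabla\phi=\int\phi$, so the weak limit $\bar w$ satisfies the \emph{equation} (not merely the sub-equation) against $H^1_0(\O;\Dr)$-tests, whence $\bar w=w_\O$ by uniqueness as soon as $\bar w\in H^1_0(\O;\Dr)$.

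The one point worth flagging is the ``stability of $\gamma$-convergence under $\wedge$ with a fixed measure'', which you invoke as a black box. This is indeed the whole content of the proposition, and your diagnosis of why the naive truncation $u_n\wedge Mw_{\O_n}$ fails is correct (on $\Dr\setminus\O$ it converges to $\bar w\wedge Mw=0$ trivially, since $w=0$ there, so it carries no information about $\bar w$). In the cited references the required step is not usually packaged as an abstract ``$\wedge$-stability'' lemma; rather it comes out of the local character of $\gamma$-convergence for relaxed Dirichlet problems (Dal~Maso--Mosco theory): since $\mu_{\O_n\cup\O}$ agrees with $\mu_{\O_n}$ on $\Dr\setminus\O$ and the $\gamma$-limit $\mu$ is $+\infty$ there, any $\gamma$-cluster point of $\mu_{\O_n\cup\O}$ is $+\infty$ on $\Dr\setminus\O$, while on $\O$ it is bounded above by $\mu_\O=0$. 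So your appeal to the literature is legitimate, but you should be aware that the statement you quote is a corollary of localization results rather than a theorem stated verbatim in \cite{dm93,bubu05}.
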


In the case of full Dirichlet boundary conditions and $D$ bounded, the proof of Proposition \ref{gwgprop} can be found in \cite{bubu05}, \cite{but10} and \cite{buve}; the same proof can be repeated, step by step, to our more general setting.

We conclude this section with the following semi-continuity result, which can be found, for example, in \cite{but10} and \cite{buve}.

\begin{prop}\label{scres}
Suppose that the sequence of quasi-open sets $\O_n\subset\Dr$ weak-$\gamma$-converges to $\O$. Then we have:
$$|\O|\le\liminf_{n\to\infty}|\O_n|\qquad\hbox{and}\qquad\lambda_k(\O;\Dr)\le \liminf_{n\to\infty}\lambda_k(\O_n;\Dr),\quad\forall k\in\N.$$
\end{prop}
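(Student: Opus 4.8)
The plan is to treat the two inequalities separately: the measure bound by a Fatou-type argument on the energy functions, and the eigenvalue bound by reducing, via the enlargement of Proposition~\ref{gwgprop}, to the already established continuity of $\lambda_k(\,\cdot\,;\Dr)$ along $\gamma$-converging sequences.

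For the measure inequality I may assume, passing to a subsequence, that $|\O_n|\to\liminf_n|\O_n|$ and that this limit is finite (otherwise there is nothing to prove). By definition of weak-$\gamma$-convergence $w_{\O_n}\to w$ strongly in $L^2(\Dr)$, so along a further subsequence $w_{\O_n}\to w$ pointwise almost everywhere. Recalling that $\O_n=\{w_{\O_n}>0\}$ and $\O=\{w>0\}$ (Proposition~\ref{proene}(e) and the definition of weak-$\gamma$-convergence), at a.e.\ point $x$ with $w(x)>0$ one has $w_{\O_n}(x)>0$ for $n$ large, whence $\ind_{\O}\le\liminf_n\ind_{\O_n}$ almost everywhere. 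Fatou's lemma then yields $|\O|\le\liminf_n|\O_n|$.

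For the eigenvalue inequality I would invoke Proposition~\ref{gwgprop} to produce quasi-open sets $\widetilde\O_n\supset\O_n$ that $\gamma$-converge to $\O$. The inclusion $\O_n\subset\widetilde\O_n$ gives $H^1_0(\O_n;\Dr)\subset H^1_0(\widetilde\O_n;\Dr)$, hence, by the min--max characterization of the eigenvalues, the monotonicity $\lambda_k(\widetilde\O_n;\Dr)\le\lambda_k(\O_n;\Dr)$ for every $k$. On the other hand the $\gamma$-convergence $\widetilde\O_n\to\O$ gives, by the continuity of $\lambda_k(\,\cdot\,;\Dr)$ under $\gamma$-convergence established above, $\lambda_k(\widetilde\O_n;\Dr)\to\lambda_k(\O;\Dr)$. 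Combining the two, $\lambda_k(\O;\Dr)=\lim_n\lambda_k(\widetilde\O_n;\Dr)\le\liminf_n\lambda_k(\O_n;\Dr)$.

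The delicate point — and the step I expect to be the main obstacle — is that the continuity of $\lambda_k$ under $\gamma$-convergence was proved only for sequences of uniformly bounded measure $|\,\cdot\,|\le C<|\Dr|$, so one must check that the enlarged sets $\widetilde\O_n$ obey such a uniform bound. When all the $\O_n$ are contained in a fixed quasi-open set $\O_0$ with $|\O_0|<|\Dr|$ this is automatic, since the refined form of Proposition~\ref{gwgprop} allows one to take $\widetilde\O_n\subset\O_0$; this is exactly the situation occurring in the applications, where $|\O_n|\le1$. As an alternative route avoiding the enlargement, one can argue directly: taking the first $k$ $L^2$-orthonormal eigenfunctions $u_1^n,\dots,u_k^n$ of $\O_n$, the identity $\int_{\Dr}|\nabla u_j^n|^2\,dx=\lambda_j(\O_n;\Dr)\le\lambda_k(\O_n;\Dr)$ keeps them bounded in $H^1(\Dr)$; the domination $|u_j^n|\le C\,w_{\O_n}$ (maximum principle together with the $L^\infty$-bound of Proposition~\ref{unibndlbk}) and Remark~\ref{tighten} give strong $L^2(\Dr)$-convergence to orthonormal limits $u_j$, which lie in $H^1_0(\O;\Dr)$ because $|u_j|\le C\,w$ and $\{w=0\}=\Dr\setminus\O$; testing $\lambda_k(\O;\Dr)$ on $\mathrm{span}(u_1,\dots,u_k)$ and using, along a subsequence making each $\lambda_j(\O_n;\Dr)$ convergent, the equality $\int_{\Dr}|\nabla(\sum_jc_ju_j^n)|^2\,dx=\sum_jc_j^2\lambda_j(\O_n;\Dr)$ together with the weak lower semicontinuity of the Dirichlet integral then closes the argument.
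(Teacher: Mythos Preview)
The paper does not actually prove this proposition; it simply refers the reader to \cite{but10} and \cite{buve}. Your argument is the standard one from those references: Fatou for the measure inequality, and for $\lambda_k$ the enlargement of Proposition~\ref{gwgprop} combined with the monotonicity of $\lambda_k$ under inclusion and its continuity along $\gamma$-convergent sequences.

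You are right to single out the uniform measure bound on $\widetilde\O_n$ as the only genuine technical point. Both the corollary on continuity of $\lambda_k$ under $\gamma$-convergence and Proposition~\ref{unibndlbk} (which your alternative route needs for the $L^\infty$ estimate, hence for the domination $|u_j^n|\le C\,w_{\O_n}$) are stated under $|\O_n|\le C<|\Dr|$, and Proposition~\ref{gwgprop} gives no control on $|\widetilde\O_n|$ in general. In every use the paper makes of the proposition the constraint $|\O_n|\le1$ (or an analogous bound) is present, and the ``moreover'' clause of Proposition~\ref{gwgprop} then confines $\widetilde\O_n$ to a fixed $\O_0$; so in context the issue does not arise, but as the statement is literally written your caveat is appropriate.

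One small remark on the direct alternative: Remark~\ref{tighten} is phrased for $\gamma$-convergent sequences, but its justification uses only the strong $L^2$-convergence of $w_{\O_n}$, which is exactly the weak-$\gamma$ hypothesis, so invoking it here is legitimate. The step from $|u_j|\le C\,w$ a.e.\ to $u_j\in H^1_0(\O;\Dr)$ is also fine, since an a.e.\ inequality between $H^1$ functions passes to a q.e.\ inequality of their quasi-continuous representatives, and $w\in H^1_0(\O;\Dr)$ because $\O=\{w>0\}$.
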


\section{The spectral drop in a bounded domain}\label{sdbound}

In this section we consider the case when the box $\Dr$ is bounded. We obtain that in this case the optimal spectral drop exists for a very large class of shape cost functionals. More precisely, the following result holds.

\begin{teo}\label{exbound}
Let $\Dr\subset\R^d$ satisfy \eqref{assdr}. Suppose that the shape cost functional $\F$ on the quasi-open sets of $\Dr$ is such that:
\begin{itemize}
\item[1)] $\F$ is $\gamma$ lower semi-continuous, that is
$$\F(\O)\le\liminf_n\F(\O_n)\qquad\hbox{whenever}\quad\O_n\to_{\gamma}\O;$$
\item[2)] $\F$ is monotone decreasing with respect to the set inclusion, that is
$$\F(\O_1)\le\F(\O_2)\qquad\hbox{whenever}\quad\O_2\subset\O_1.$$
\end{itemize}
Then the shape optimization problem
\be\label{sopNDbox}
\min\Big\{\F(\O)\ :\ \O\subset\O_0,\ |\O|=1\Big\},
\ee
admits at least a solution.
\end{teo}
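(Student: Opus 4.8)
The plan is to run the direct method in the class of quasi-open sets, using the weak-$\gamma$-convergence as the source of compactness and then upgrading it to genuine $\gamma$-convergence through the enlargement Proposition \ref{gwgprop}. It is exactly this upgrade that makes the two hypotheses on $\F$ cooperate: monotonicity is what lets us trade a weak-$\gamma$ limit for a $\gamma$ limit, along which lower semicontinuity can finally be applied.

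Denote by $m$ the infimum in \eqref{sopNDbox}, and take a minimizing sequence $\O_n\subset\O_0$ with $|\O_n|=1$, so that $\F(\O_n)\to m$. Since all the $\O_n$ are contained in the fixed set $\O_0$ with $|\O_0|<|\Dr|$, the energy functions $w_{\O_n}$ are uniformly bounded in $H^1(\Dr)$ by Proposition \ref{proene}\emph{(a)} together with the uniform measure bound, and they all belong to $H^1_0(\O_0;\Dr)$, whose inclusion into $L^2(\Dr)$ is compact. Hence, up to a subsequence, $w_{\O_n}$ converges strongly in $L^2(\Dr)$ to some $w\in H^1_0(\O_0;\Dr)$; setting $\O=\{w>0\}$ we obtain that $\O_n$ weak-$\gamma$-converges to the quasi-open set $\O\subset\O_0$. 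Proposition \ref{scres} then yields the semicontinuity of the measure,
$$|\O|\le\liminf_{n\to\infty}|\O_n|=1.$$

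The central step is to pass the cost $\F$ to the limit, and here weak-$\gamma$-convergence by itself is insufficient, since $\F$ is assumed only $\gamma$-lower semicontinuous. To bridge this gap I would invoke Proposition \ref{gwgprop}: because $\O_n$ weak-$\gamma$-converges to $\O$ and $\O_n\subset\O_0$, there is a sequence $\widetilde\O_n$ with $\O_n\subset\widetilde\O_n\subset\O_0$ that actually $\gamma$-converges to $\O$. The two hypotheses on $\F$ now combine cleanly: monotonicity under inclusion gives $\F(\widetilde\O_n)\le\F(\O_n)$ since $\O_n\subset\widetilde\O_n$, while $\gamma$-lower semicontinuity along $\widetilde\O_n\to_\gamma\O$ gives
$$\F(\O)\le\liminf_{n\to\infty}\F(\widetilde\O_n)\le\liminf_{n\to\infty}\F(\O_n)=m.$$

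It remains to fix the measure constraint, which is the only real bookkeeping: the construction so far produces merely $|\O|\le 1$. If $|\O|<1$, then since the admissibility of \eqref{sopNDbox} forces $|\O_0|\ge 1$, I would choose a quasi-open set $\O'$ with $\O\subset\O'\subset\O_0$ and $|\O'|=1$, obtained by enlarging $\O$ inside $\O_0$; monotonicity of $\F$ then yields $\F(\O')\le\F(\O)\le m$. Thus $\O'$ is admissible, has measure exactly $1$, and attains the infimum, so it is the sought minimizer. I expect the main conceptual obstacle to lie precisely in the central step: compactness is available only in the weaker weak-$\gamma$ topology, and it is monotonicity, through the enlarged sets $\widetilde\O_n$ of Proposition \ref{gwgprop}, that converts the weak-$\gamma$ limit into a true $\gamma$-limit on which the lower semicontinuity assumption can be used.
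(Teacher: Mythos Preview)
Your proposal is correct and follows essentially the same route as the paper: take a minimizing sequence, extract a weak-$\gamma$-convergent subsequence, invoke Proposition~\ref{gwgprop} to obtain enlarged sets $\widetilde\O_n\supset\O_n$ that genuinely $\gamma$-converge to $\O$, combine monotonicity with $\gamma$-lower semicontinuity to pass $\F$ to the limit, and use Proposition~\ref{scres} for the measure. You have in fact spelled out more detail than the paper does---in particular the compactness argument for the energy functions and the final saturation step (enlarging $\O$ to $\O'$ with $|\O'|=1$), which the paper compresses into the single clause ``which concludes the proof since $\F$ is decreasing.''
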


\begin{proof}
Suppose that $\O_n$ is a minimizing sequence for \eqref{sopNDbox}. Up to a subsequence, we may assume that $\O_n$ weak-$\gamma$-converges to a quasi-open set $\O\subset \Dr$. By Proposition \ref{gwgprop}, there are quasi open sets $\widetilde\O_n\subset\Dr$ such that the sequence $\widetilde\O_n$ $\gamma$-converges to $\O$ and $\O_n\subset\widetilde\O_n$. Then we have
$$\F(\O)\le\liminf_{n\to\infty}\F(\widetilde\O_n)\le\liminf_{n\to\infty}\F(\O_n),$$
and, on the other hand, by Proposition \ref{scres}, we have
$$|\O|\le\liminf_{n\to\infty}|\O_n|,$$
which concludes the proof since $\F$ is decreasing.
\end{proof}

\begin{cor}
Suppose that $F:\R^p\to\R$ is a lower-semi continuous function, increasing in each variable. Then the shape optimization problem 
$$\min\Big\{F\big(\lambda_{k_1}(\O;\Dr),\dots,\lambda_{k_p}(\O;\Dr)\big)\ :\ \O\subset D,\ |\O|=1\Big\},$$
has a solution.
\end{cor}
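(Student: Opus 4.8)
The plan is to deduce this corollary directly from Theorem~\ref{exbound} by verifying that the composite functional $\F(\O):=F\big(\lambda_{k_1}(\O;\Dr),\dots,\lambda_{k_p}(\O;\Dr)\big)$ satisfies the two hypotheses of the theorem, namely $\gamma$-lower semicontinuity and monotonicity with respect to set inclusion. First I would record the two relevant properties of the individual eigenvalue functionals. Monotonicity is immediate from the variational characterization: if $\O_1\subset\O_2$ then $H^1_0(\O_1;\Dr)\subset H^1_0(\O_2;\Dr)$, so the min-max formula
$$\lambda_k(\O;\Dr)=\min_{S_k\subset H^1_0(\O;\Dr)}\ \max_{u\in S_k}\frac{\int_{\Dr}|\nabla u|^2\,dx}{\int_{\Dr}u^2\,dx}$$
yields $\lambda_k(\O_2;\Dr)\le\lambda_k(\O_1;\Dr)$; that is, each $\lambda_{k_j}(\cdot;\Dr)$ is monotone \emph{decreasing} under inclusion, which is exactly the sense required in hypothesis~2). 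For $\gamma$-continuity I would invoke the Corollary immediately following the $\gamma$-convergence proposition, which states that $\lambda_k(\cdot;\Dr)$ is in fact $\gamma$-\emph{continuous} on sequences of uniformly bounded measure: $\O_n\to_\gamma\O$ implies $\lambda_{k_j}(\O_n;\Dr)\to\lambda_{k_j}(\O;\Dr)$ for each $j$.

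Next I would combine these with the monotonicity and semicontinuity of $F$. For the $\gamma$-lower semicontinuity of $\F$, take $\O_n\to_\gamma\O$; then the vector $\big(\lambda_{k_1}(\O_n;\Dr),\dots,\lambda_{k_p}(\O_n;\Dr)\big)$ converges to $\big(\lambda_{k_1}(\O;\Dr),\dots,\lambda_{k_p}(\O;\Dr)\big)$, and since $F$ is lower semicontinuous we obtain
$$F\big(\lambda_{k_1}(\O;\Dr),\dots\big)\le\liminf_{n\to\infty}F\big(\lambda_{k_1}(\O_n;\Dr),\dots\big),$$
which is hypothesis~1). For the monotonicity of $\F$, suppose $\O_2\subset\O_1$; then $\lambda_{k_j}(\O_1;\Dr)\le\lambda_{k_j}(\O_2;\Dr)$ for each $j$ by the step above, and since $F$ is increasing in each variable we get $\F(\O_1)\le\F(\O_2)$, which is exactly hypothesis~2). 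Thus both assumptions of Theorem~\ref{exbound} hold and the existence of a minimizer follows.

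The only genuine subtlety, rather than the main obstacle, is bookkeeping about the measure constraint: the $\gamma$-continuity corollary is stated for sequences of uniformly bounded measure $|\O_n|\le C<|\Dr|$, and the admissible class $\{\O\subset\Dr:\ |\O|=1\}$ fits this framework (one uses the fixed bound with $\delta$ strictly below $|\Dr|$, which is available since $|\Dr|>1$). Since Theorem~\ref{exbound} already internalizes the weak-$\gamma$-compactness of minimizing sequences and the semicontinuity of the measure through Propositions~\ref{gwgprop} and~\ref{scres}, no further compactness argument is needed here; the corollary is essentially a verification that the abstract hypotheses apply, and I would expect its proof to be short.
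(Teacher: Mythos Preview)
Your proposal is correct and matches the paper's intent: the corollary is stated there without proof, as an immediate consequence of Theorem~\ref{exbound}, and your argument spells out precisely the two routine verifications (monotonicity of each $\lambda_{k_j}(\cdot;\Dr)$ via min--max, $\gamma$-continuity of each $\lambda_{k_j}(\cdot;\Dr)$ via the corollary in Section~\ref{sgamma}, then composition with the increasing lower-semicontinuous $F$) that make the theorem applicable.
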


\begin{oss}\label{lb1touch}
We notice that, considering the shape cost functional $\F(\O)=\lambda_1(\O;\Dr)$ in \eqref{sopNDbox}, an optimal domain $\O$ must touch the boundary of $\Dr$. \emph{Precisely, if we suppose that $\Dr$ is smooth, then the measure $\HH^{d-1}\big(\partial\O\cap\partial\Dr\big)>0$.} Indeed, suppose that this is not the case, i.e. $\HH^{d-1}\big(\partial\O\cap\partial\Dr\big)=0$. Thus the trace of every function $u\in H^1_0(\O;\Dr)\subset H^1(\Dr)$ on the boundary $\partial\Dr$ is zero and so, since $\Dr$ is smooth, we have that $H^1_0(\O;\Dr)\subset H^1_0(\Dr)$, which in turn gives $H^1_0(\O;\Dr)=H^1_0(\O)$. 

Let now $u\in H^1_0(\O)$ be the first normalized eigenfunction on $\O$. Then a classical argument (see \cite[Chapter 6]{tesi}) gives that:

\begin{itemize}
\item the free boundary $\partial\O\cap\Dr$ is smooth and analytic (see \cite{brla});
\item there is a constant $\alpha>0$ such that 
$$|\nabla u|^2=\alpha\quad{on}\quad \partial\O\cap\Dr;$$
\item $u$ is Lipschitz continuous on $\R^d$ and $\O=\{u>0\}$. In particular, there is a constant $C>0$ such that 
\be\label{lipestbnd}
u(x)\le C\dist(x,\partial\O),\quad\hbox{for every}\quad x\in\O.
\ee
\end{itemize}
Up to translation of $\O$ in $\Dr$, we can assume that there is a point $x_0\in\partial\Dr\cap\partial\O$. Let $\nu$ be the external normal to $\partial\Dr$ in $x_0$ and let $\O_\eps:=(\eps\nu+\O)\cap\Dr$. Setting $u_\eps(x)=u(-e\nu+x)$ and applying \eqref{lipestbnd}, we get 
\be\label{changecondest}
\lambda_1(\O_\eps;\Dr)\le \frac{\int_{\O_\eps}|\nabla u_\eps|^2\,dx}{\int_{\O_\eps}u_\eps^2\,dx}\le \frac{\int_{\O}|\nabla u|^2\,dx}{1-\int_{\O\cap(-\eps\nu+\Dr^c)}u^2\,dx}\le \frac{\lambda_1(\O)}{1-C^2\eps^2|(\eps\nu+\O)\cap\Dr^c|}.
\ee
Now since $|(\eps\nu+\O)\cap\Dr^c|\to0$ as $\eps\to0$, for small enough $\eps$ we can find a smooth vector field $V_\eps\in C^{\infty}_c(\Dr;\R^d)$ such that the set $\widetilde \O_\eps:=(Id+V_\eps)(\O_\eps)$ satisfies 
$$|\widetilde\O_\eps|=|\O|=1\qquad\hbox{and}\qquad\lambda_1(\widetilde\O_\eps;\Dr)\le \lambda_1(\O_\eps;\Dr)-\frac\alpha2|(\eps\nu+\O)\cap\Dr^c|.$$ 
Together with \eqref{changecondest} this implies that for $\eps$ small enough $\lambda_1(\widetilde\O_\eps;\Dr)<\lambda_1(\O)$, which is a contradiction with the optimality of $\O$.
\end{oss}

\begin{oss}\label{lb1orth}
If we assume that $\Dr$ is smooth, then the boundary of an optimal domain $\O$ for \eqref{sopNDbox} intersects $\partial\Dr$ orthogonally. Indeed, by a smooth change of variables we may assume that $\partial\Dr$ is flat around the intersection point $x_0\in\partial\Dr\cap\partial\O$. We localize the problem in a small ball $B_r(x_0)$, in which we consider $\widetilde\O$ to be the union of $\O\cap B_r$ and its reflection with respect to $\partial\Dr$ as in Figure \ref{fig7}. Analogously we define $\widetilde u\in H^1(B_r(x_0))$ as the eigenfunction $u$ on $\O\cap B_r(x_0)$ and its reflection on the rest of $\widetilde\O$. Thus $\widetilde u$ is a solution of the free boundary problem
$$\min\Big\{J(v):\ v\in H^1(B_r(x_0)),\ v=\widetilde u\ \hbox{on}\ \partial B_r(x_0),\ |\{v>0\}|=|\{\widetilde u>0\}|\Big\},$$
where the functional $J:H^1(B_r(x_0))\to\R$ is defined as
$$J(v):=\frac{\frac12\int_{B_r(x_0)} |\nabla v|^2\,dx+\int_{\O\setminus B_r(x_0)} |\nabla u|^2\,dx}{\frac12\int_{B_r(x_0)} v^2\,dx+\int_{\O\setminus B_r(x_0)} u^2\,dx}.$$
Now by the same argument as in \cite{brla} the free boundary $\partial\{\widetilde u>0\}\cap B_r(x_0)=\partial\widetilde\O\cap B_r(x_0)$ is smooth and so, by the symmetry of $\widetilde\O$ we get that $\partial\widetilde\O$ is orthogonal to $\partial\Dr$.
\begin{figure}[h]
\includegraphics[scale=0.4]{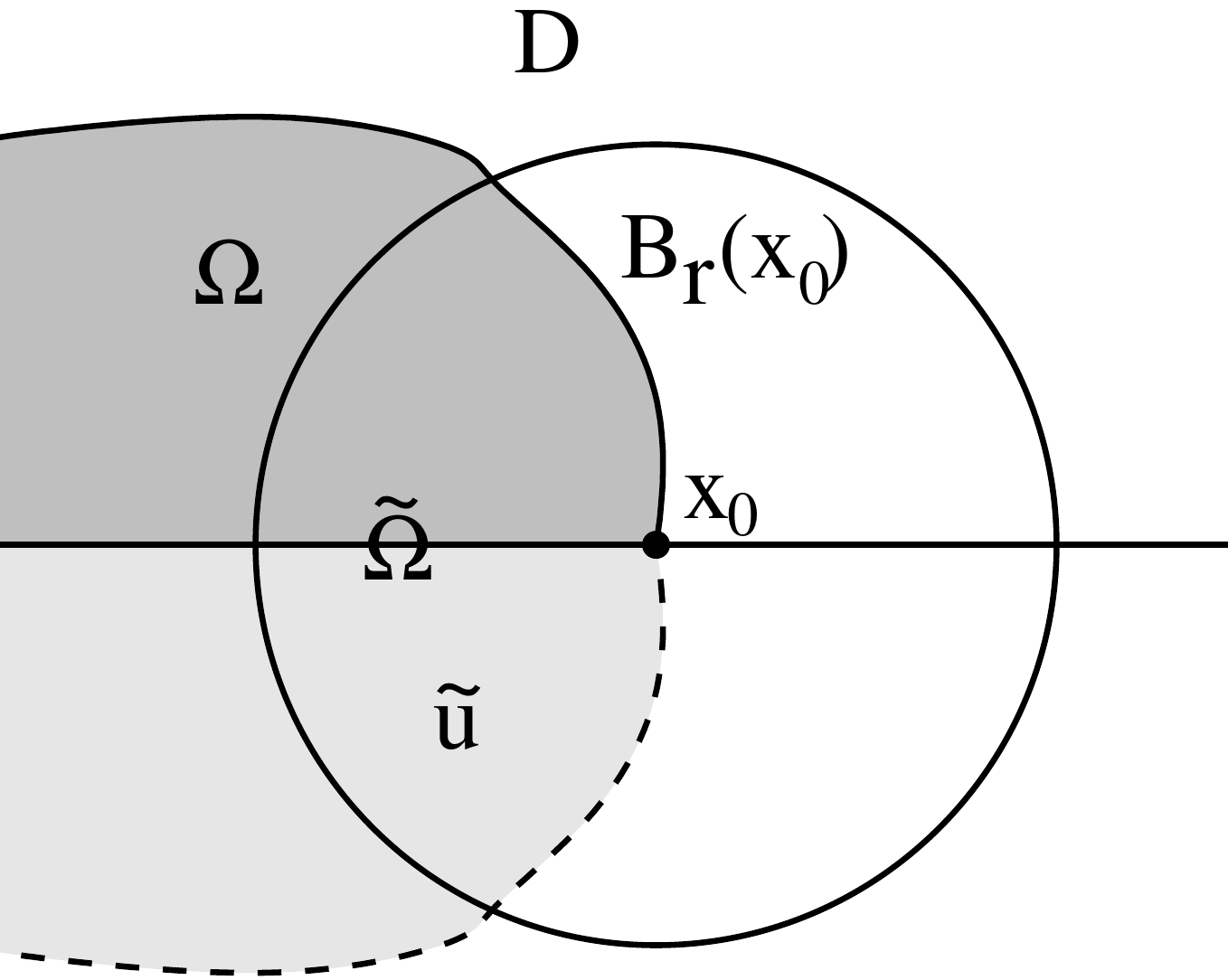}
\caption{The construction of the set $\widetilde\O$ which is a solution of a free boundary problem in a neighbourhood of $x_0\in\partial\O\cap\partial\Dr$.}
\label{fig7}
\end{figure}
\end{oss}

\section{The spectral drop in unbounded domain}

In this section we discuss the existence of a solution to the shape optimization problem 
\be\label{sopNDlb1}
\min\Big\{\lambda_1(\O;\Dr):\ \O\subset\Dr,\ \O\ \hbox{quasi-open},\ |\O|=1\Big\},
\ee
in an unbounded domain $\Dr\subset\R^d$. The existence may fail since it might be convenient for a drop $\O\subset\Dr$ to escape at infinity as in the situation described in the following proposition.

\begin{prop}[Spectral drop in the complementary of a convex domain]\label{sopconcave}
Let $\Dr\subset\R^2$ be an open set whose complementary $\Dr^c$ is an unbounded closed strictly convex set. Then denoting by $H$ the half-space $\{(x,y)\in\R^2:\ y>0\}$ and by $B_+$ the half-ball $B_{\sqrt{2/\pi}}\cap H$, we have
$$\inf\Big\{\lambda_1(\O;\Dr):\ \O\subset\Dr,\ \O\ \hbox{quasi-open},\ |\O|=1\Big\}=\lambda_1(B_+;H),$$
and the infimum above is not attained and so the problem \eqref{sopNDlb1} does not have a solution.
\end{prop}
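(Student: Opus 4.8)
The plan is to prove the two matching inequalities separately, and to read off non-existence from a strict form of the lower bound. Throughout I write $\Dr^c=K$ for the (closed, unbounded, strictly convex) obstacle, so that $\partial\Dr=\partial K$ and the admissible drops carry Neumann conditions on $\partial\O\cap\partial\Dr$.

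For the lower bound $\inf\ge\lambda_1(B_+;H)$ I would argue by rearrangement, comparing every admissible drop with a half-disc resting on a flat wall. The geometric input is the relative isoperimetric inequality in the complement of a convex set: since $\Dr^c$ is convex, every set of finite perimeter $A\subset\Dr$ satisfies
$$P(A;\Dr)\ge\sqrt{2\pi\,|A|},$$
the right-hand side being exactly the relative perimeter of a half-disc of area $|A|$ whose diameter lies on a straight wall. This is the planar Choe--Ghomi--Ritor\'e inequality; convexity of $\Dr^c$ is precisely what makes the half-space constant sharp, and equality forces $\partial^* A\cap\Dr$ to be a half-circle centred on a segment of $\partial\Dr$. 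Granting this, I take any $v\in H^1_0(\O;\Dr)$ with $\|v\|_{L^2(\Dr)}=1$ and rearrange $|v|$ into $H$: for each $t>0$ I replace the level set $\{|v|>t\}\subset\Dr$, of measure $\mu(t)$, by the half-disc $\{v^*>t\}\subset H$ of the same measure centred at the origin of $\partial H$. Since $|\{v\ne0\}|\le|\O|=1$, the equimeasurable function $v^*$ lies in $H^1_0(B_+;H)$ with $\int_H(v^*)^2=1$, and the coarea formula combined with the isoperimetric inequality above yields the P\'olya--Szeg\H o inequality $\int_H|\nabla v^*|^2\le\int_\Dr|\nabla v|^2$. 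Hence
$$\int_\Dr|\nabla v|^2\ge\int_H|\nabla v^*|^2\ge\lambda_1(B_+;H),$$
and taking the infimum over $v$ gives $\lambda_1(\O;\Dr)\ge\lambda_1(B_+;H)$ for every drop. Note this uses neither compactness nor the existence of an eigenfunction.

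For non-attainment I would upgrade this to a strict inequality. If $\O$ does not touch $\partial\Dr$, then $H^1_0(\O;\Dr)=H^1_0(\O)$ and $\lambda_1(\O;\Dr)=\lambda_1(\O)$ is a pure Dirichlet eigenvalue, so Faber--Krahn gives $\lambda_1(\O)\ge\lambda_1(\text{disc of area }1)>\lambda_1(\text{disc of area }2)=\lambda_1(B_+;H)$, strictly. If instead $\O$ realised $\lambda_1(\O;\Dr)=\lambda_1(B_+;H)$, then for a first eigenfunction $u$ (which exists for the fixed finite-measure drop $\O$) all the inequalities above would be equalities; in particular each superlevel set $\{u>t\}$ would saturate the relative isoperimetric inequality, and would therefore be a half-disc whose flat side lies on a segment of $\partial\Dr$. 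Strict convexity of $\Dr^c$ means $\partial\Dr$ contains no segment, a contradiction. Thus $\lambda_1(\O;\Dr)>\lambda_1(B_+;H)$ for every admissible $\O$, so the infimum is never attained.

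It remains to prove the matching upper bound $\inf\le\lambda_1(B_+;H)$ by letting a near-optimal drop escape to infinity. Since $\Dr^c$ is an unbounded convex set in the plane, its boundary cannot have curvature bounded away from zero, so I can choose $p_n\in\partial\Dr$ with $|p_n|\to\infty$ at which the curvature of $\partial\Dr$ tends to $0$. After the rigid motion sending $p_n$ to the origin and the inner normal of $\Dr^c$ to $-e_2$, the sets $\Dr$ converge, in every fixed ball $B_R$, to the half-plane $H$: there $\partial\Dr$ is the graph of a function $g_n$ with $g_n(0)=0$, $g_n'(0)=0$ and $\|g_n\|_{C^1(B_R)}\to0$. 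I then straighten $\partial\Dr$ near $p_n$ by a diffeomorphism $\Psi_n$ equal to the identity off $B_R$, carrying $\partial H$ onto $\partial\Dr$, with $\Psi_n\to\mathrm{Id}$ in $C^1$, and set $\O_n=\Psi_n(B_+)$ (followed by a vanishing homothety restoring $|\O_n|=1$). Then $\O_n\subset\Dr$ is an admissible drop resting on $\partial\Dr$, and the change of variables $\Psi_n$ in the Rayleigh quotient gives $\lambda_1(\O_n;\Dr)\to\lambda_1(B_+;H)$. Combining the three parts yields $\inf=\lambda_1(B_+;H)$, not attained.

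I expect the main obstacle to be the lower bound, and within it the relative isoperimetric inequality outside a convex set together with its rigidity (equality case): this is the only place where convexity enters, and it is what simultaneously pins the optimal value to the flat-wall half-disc and rules out minimizers. A secondary technical point is the P\'olya--Szeg\H o step for the mixed rearrangement, namely that the portions of the level sets lying on $\partial\Dr$ (where $u$ satisfies the Neumann condition and need not vanish) contribute nothing to $P(\{u>t\};\Dr)$; this is automatic once one works with the relative perimeter, but it should be stated explicitly.
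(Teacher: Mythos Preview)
Your proposal is correct and follows essentially the same strategy as the paper: a P\'olya--Szeg\H{o} rearrangement driven by the relative isoperimetric inequality outside a convex obstacle for the (strict) lower bound, and drops escaping to infinity along the flattening boundary for the upper bound. The only notable difference is in the upper-bound construction: the paper simply takes the sets $B_r(x_n)\cap\Dr$ with $x_n\in\partial\Dr$, $|x_n|\to\infty$, and invokes a Hausdorff-convergence result from \cite{bubu05} to pass to the limit in the eigenvalue, whereas you straighten $\partial\Dr$ near $p_n$ by an explicit $C^1$-small diffeomorphism and push $B_+$ forward; both work, yours is self-contained but heavier. One small caution: strict convexity does not force $\partial\Dr\in C^2$, so your appeal to pointwise curvature should be replaced by the weaker (and sufficient) statement that, after a rigid motion sending $p_n$ to $0$ and the supporting line to $\partial H$, the convex graphs $g_n$ satisfy $\|g_n\|_{C^0(B_R)}\to0$ (hence $\|g_n\|_{C^1}\to0$ on compact subintervals by convexity), which is exactly the local Hausdorff flattening the paper uses.
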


\begin{proof}
Let $\O\subset\Dr$ be a given quasi-open set of unit measure. We will first show that 
$$\lambda_1(B_+;H)<\lambda_1(\O;\Dr).$$
In order to do that consider the first normalized eigenfunction $u$ on $\O$ solving
$$-\Delta u=\lambda_1(\O;\Dr)u\ \ \hbox{in}\ \ \O,\qquad \frac{\partial u}{\partial \nu}=0\ \ \hbox{on}\ \ \partial \Dr,\qquad u=0\ \ \hbox{on}\ \ \partial \O\cap \Dr.$$
Consider the rearrangement $\widetilde u\in H^1_0(B_+;H)$ of $u$ (see Figure \ref{fig4}), defined through the equality 
$$\{\widetilde u>t\}= B_{\rho(t)}\cap H,\quad \hbox{where}\ \ \rho(t)\ \ \hbox{is such that}\ \ |B_{\rho(t)}|=2|\{u>t\}|.$$
\begin{figure}
\includegraphics[scale=0.4]{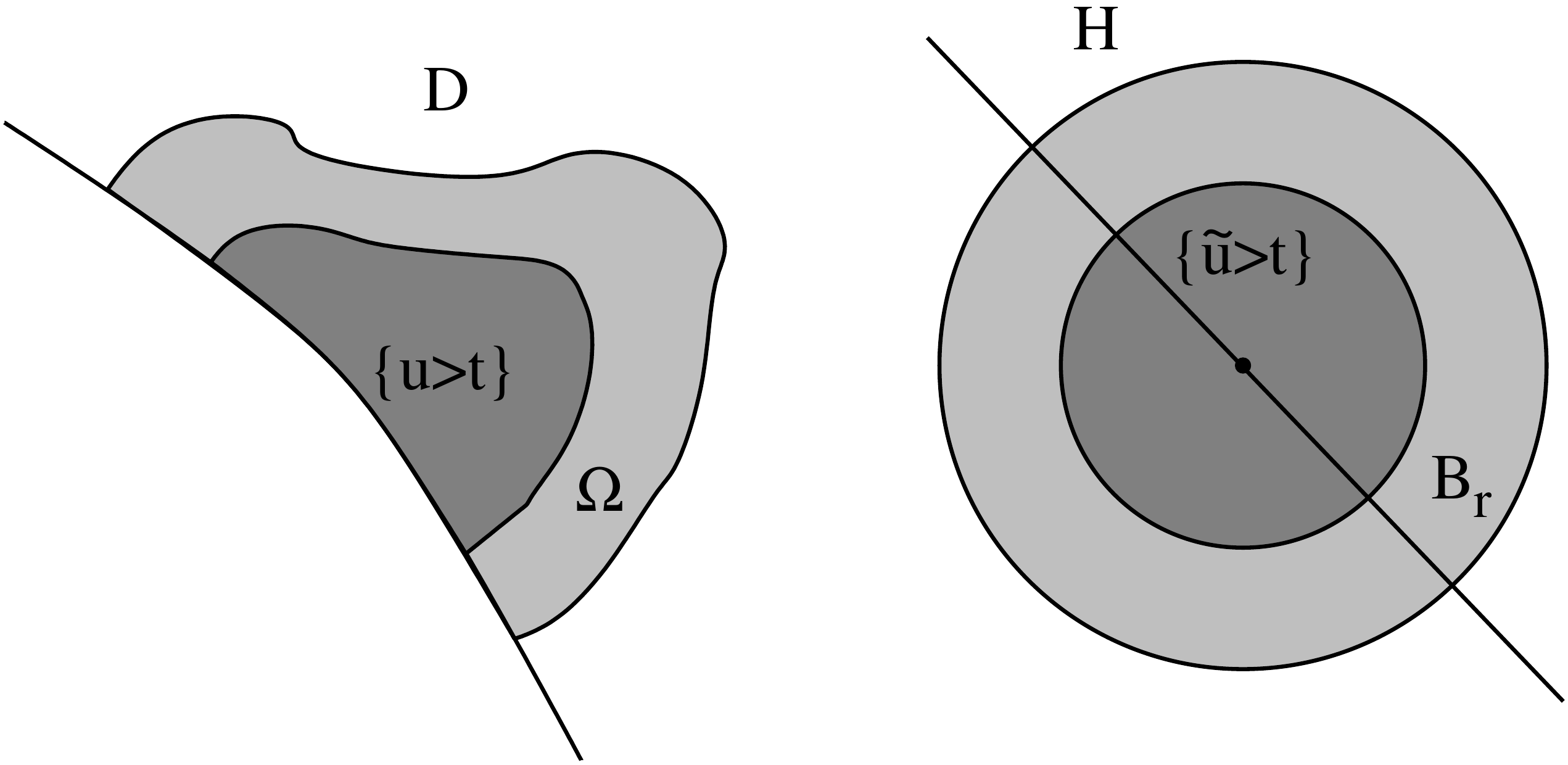}
\caption{A generic set $\O$ in the complementary of a (strictly) convex set (on the left) and a half-ball (on the right).}
\label{fig4}
\end{figure}
Then $\widetilde u$ is such that $|\nabla \widetilde u|=const$ on $B_{\rho(t)}$, for every $t>0$. Moreover, we have the isoperimetric inequality 
$$P(B_{\rho(t)};H)<P(\{u>t\};\Dr),\qquad\forall t>0.$$
Thus, setting $f(t)=|\{u>t\}|$ a standard co-area formula argument (see Example \ref{sssector}) gives
\[\begin{split}
\lambda_1(\O;\Dr)=\int_{\Dr}|\nabla u|^2\,dx
&\ge\int_{0}^{+\infty}\left(|f'(t)|^{-1}\,\HH^{1}\big(\{u=t\}\cap \Dr\big)^2\right)\,dt\\
&>\int_{0}^{+\infty}\left(|f'(t)|^{-1}\,\HH^{1}\big(\{\widetilde u=t\}\cap H\big)^2\right)\,dt\\
&=\int_{H}|\nabla \widetilde u|^2\,dx\ge \lambda_1(B_+;H).
\end{split}\]
Now it is sufficient to notice that choosing a sequence $x_n\in\partial\Dr$ such that $|x_n|\to+\infty$ one has that 
$$\dist_{\HH}\Big(\big(B_r\cap (-x_n+\Dr)\big)^c, B_+^c\Big)\to0,$$
where $\dist_{\HH}$ denotes the Hausdorff distance between closed sets. By \cite[Propostion 7.2.1]{bubu05} we have that
$$\lambda_1(B_r(x_n)\cap\Dr;\Dr)\to\lambda_1(B_+;H),$$
which proves the non-existence of optimal spectral drops in $\Dr$.
\end{proof}

We start our analysis of the spectral drop in an unbounded domain with three examples when optimal sets do exist. Namely, we consider the case when the domain $\Dr\subset\R^2$ is either a half space, an angular sector or a strip.

\begin{exam}[Spectral drop in a half-space]
Let $\Dr\subset\R^2$ be the half-plane 
$$\Dr=\Big\{(x,y)\in\R^2:\ y>0\Big\}.$$
Then the solution of \eqref{sopNDlb1} is given by the half ball $D\cap B_{\sqrt{2\pi}}$. Indeed, for any $\O\subset\Dr$, we have
$$\lambda_1(\O;\Dr)=\lambda_1\big(\O\cup\widetilde\O\cup(\partial\O\cap\partial\Dr)\big),$$
where $\widetilde\O$ is the reflection of $\O$
$$\widetilde\O=\Big\{(x,y)\in\R^2:\ (x,-y)\in\O\Big\}.$$
By the Faber-Krahn inequality we have that the optimal set of \eqref{sopNDlb1} is a half-ball centered on $\partial\Dr$ (see Figure \ref{fig1}).
\begin{figure}
\includegraphics[scale=0.4]{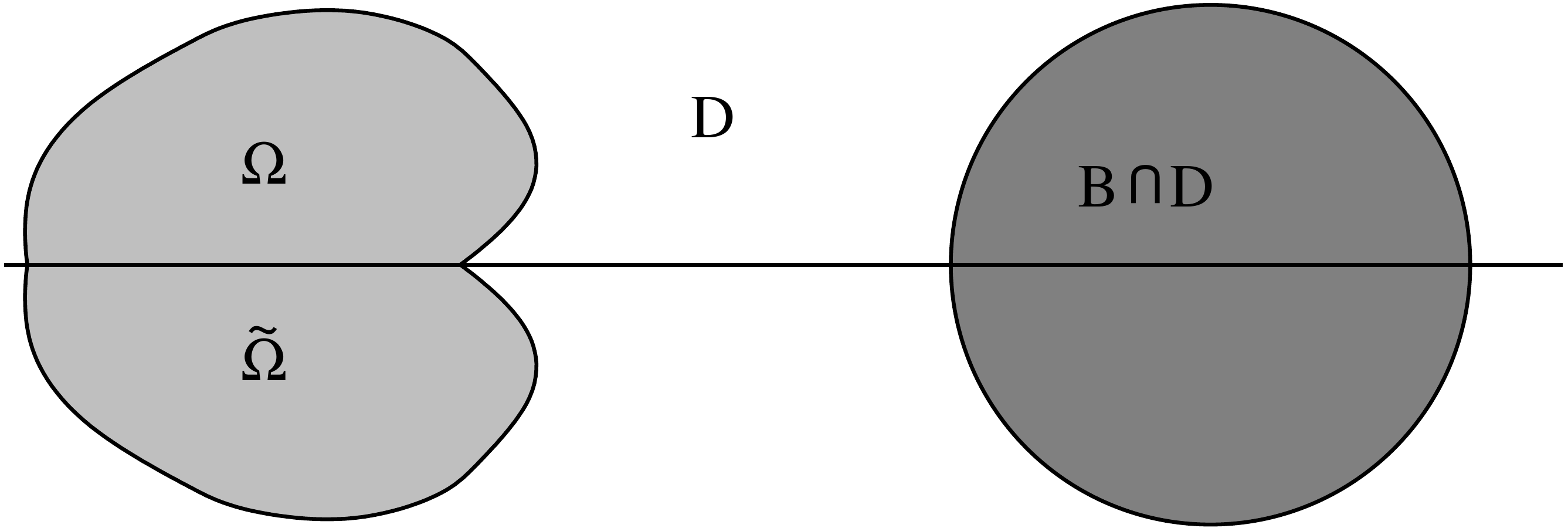}
\caption{A generic set $\O$ with its reflection $\widetilde\O$, on the left, and the optimal set $B\cap\Dr$, on the right.}
\label{fig1}
\end{figure}
\end{exam}

\begin{exam}[Spectral drop in an angular sector]\label{sssector}
Suppose now that $\Dr\subset\R^2$ is a sector 
$$\Dr=\Big\{(r\cos\theta, r\sin\theta)\in \R^2:\ r>0,\ \theta\in(-\alpha,\alpha)\Big\},$$
where $\alpha\in(0,\pi/2)$ is a given angle. We now prove that the unique solution of \eqref{sopNDlb1} is given by 
$$\Dr_{r_0}=\Big\{(r\cos\theta, r\sin\theta)\in \R^2:\ r_0>r>0,\ \theta\in(-\alpha,\alpha)\Big\},$$ 
where $r_0=\alpha^{-1/2}$. Indeed, let $\O\subset\Dr$ be a quasi-open set of unit measure and let $u$ be the first eigenfunction on $\O$. We considered the symmetrized function $\widetilde u\in H^1_0(\Dr_{r_0};\Dr)$ (see Figure \ref{fig2}), defined by 
$$\widetilde u(r,\theta)=\max\big\{t:\ |\{u>t\}|\le \alpha r^2\big\}.$$
\begin{figure}
\includegraphics[scale=0.4]{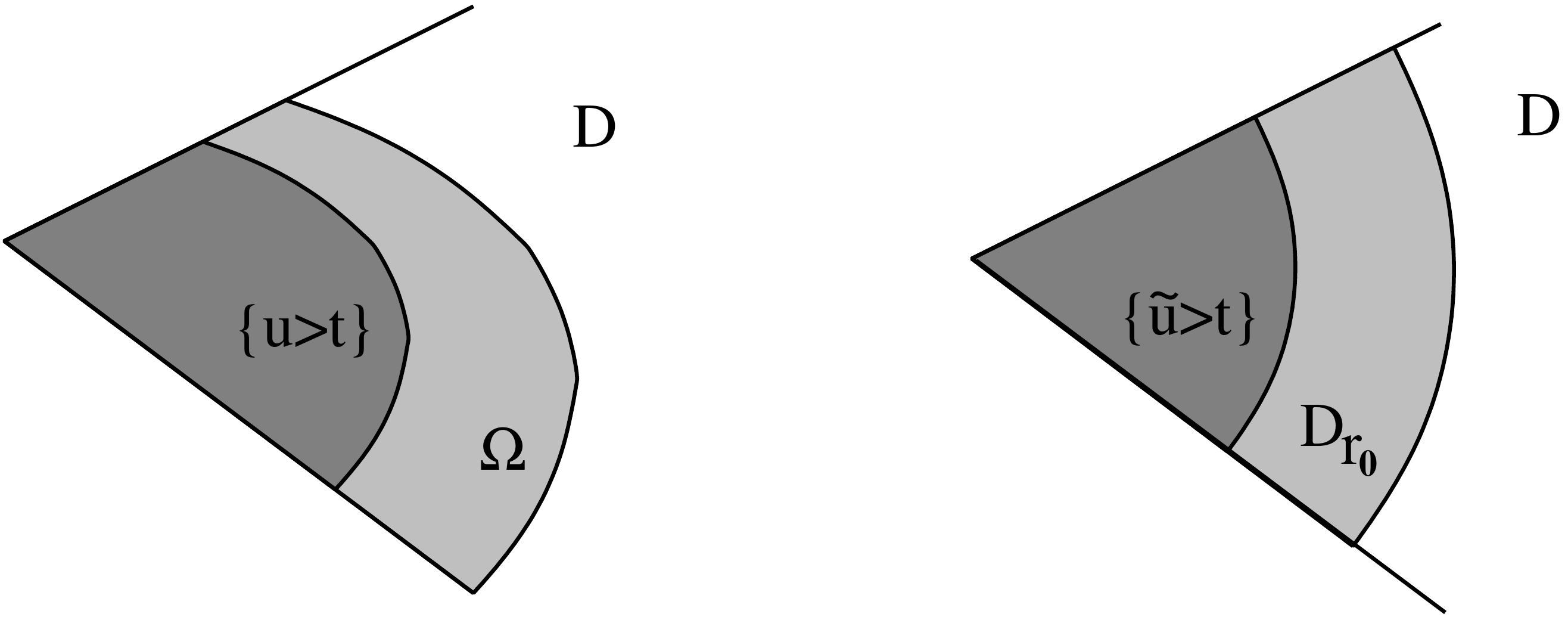}
\caption{A generic set $\O$ in the sector $\Dr$, on the left, and the optimal set $D_{r_0}$, on the right.}
\label{fig2}
\end{figure}
We now notice that $\int_{\Dr}\widetilde u^2\,dx=\int_\Dr u^2\,dx=1$ and
\[\begin{split}
\lambda_1(\O;\Dr)=\int_{\Dr}|\nabla u|^2\,dx&=\int_{0}^{+\infty}\Big(\int_{\{u=t\}}|\nabla u|\,d\HH^{1}\Big)\,dt\\
&\ge \int_0^{+\infty}\left(\Big(\int_{\{u=t\}}|\nabla u|^{-1}\,d\HH^{1}\Big)^{-1}\HH^{1}\big(\{u=t\}\cap\Dr\big)^2\right)\,dt\\
&=\int_0^{+\infty}\left(|f'(t)|^{-1}\,\HH^{1}\big(\{u=t\}\cap\Dr\big)^2\right)\,dt\\
&\ge\int_0^{+\infty}\left(|f'(t)|^{-1}\,\HH^{1}\big(\{\widetilde u=t\}\cap\Dr\big)^2\right)\,dt\\
&= \int_0^{+\infty}\left(\Big(\int_{\{\widetilde u=t\}}|\nabla \widetilde u|^{-1}\,d\HH^{1}\Big)^{-1}\HH^{1}\big(\{\widetilde u=t\}\cap\Dr\big)^2\right)\,dt\\
&=\int_0^{+\infty}\Big(\int_{\{\widetilde u=t\}}|\nabla \widetilde u|\,d\HH^{1}\Big)\,dt\\
&=\int_{\Dr}|\nabla \widetilde u|^2\,dx\ge \lambda_1(\Dr_{r_0};\Dr),
\end{split}\]
where $f(t)=|\{u>t\}|=|\{\widetilde u>t\}|$ and we used that $|\nabla \widetilde u|=const$ on $\{\widetilde u=t\}$ and that for every set $\O\subset\Dr$ the isoperimetric inequality $\HH^1(\Dr\cap\partial\Dr_r)\le \HH^1(\Dr\cap\partial\O)$ holds for $r=\sqrt{|\O|/\alpha}$.
\end{exam}

In the following example we note that the qualitative behaviour of the spectral drop may change as the measure of the drop changes.

\begin{exam}[Spectral drop in a strip]\label{ssstriscia}
Up to a coordinate change we may suppose that the strip is of the form $\Dr=\R\times (0,1)$. We consider for $c>0$ the problem
\be\label{sopstriscia}
\min\Big\{\lambda_1(\O;\Dr):\ \O\subset\Dr \ \hbox{ quasi-open},\ |\O|=c\Big\}.
\ee
We will prove that for $c$ small enough the optimal set for \eqref{ssstriscia} is a half-ball, while for $c$ large the optimal set is a rectangle $(0,c)\times(0,1)$.
\begin{itemize}
\item Let $c\le 2/\pi$. We first notice that if $|\O|\le 2/\pi$, then the isoperimetric inequality
$$P(\O;\Dr)^2\ge 2\pi|\O|,$$
holds with equality achieved when $\O$ is a half-ball centered on $\partial\Dr$. Thus, arguing as in Example \eqref{sssector} we get that the solution of \eqref{sopstriscia} is any half ball $B_{r}((0,y))$ with $r=\pi c/2\le 1$ and $y\in\R$. 

\item Let $c\ge2\sqrt2\pi$. We will prove that 
in this case the solution of \eqref{sopstriscia} is the rectangle $\O_c=(0,c)\times(0,1)$. Consider an open set $\O\subset\Dr$, of measure $|\O|=c$, such that 
$$l(t):=\HH^1(\{y=t\}\cap \O)>0,\qquad\forall t\in(a,b).$$
We will show that $\lambda_1(\O;\Dr)\ge \lambda_1(\O_c;\Dr)$. Setting $u\in H^1_0(\O;\Dr)$ to be the first normalized eigenfunction on $\O$, we have 
$$h(t):=\left(\int_0^1 u^2(x,t)\,dx\right)^{1/2}>0,\quad\forall t\in(0,1),\qquad\hbox{and}\qquad\int_0^1 h(t)^2\,dt=1.$$
Taking the derivative in $t$ we get
$$|h'(t)|=\frac{1}{h(t)}\left|\int_0^1 u_y(x,t) u(x,t)\,dx\right|\le \left(\int_0^1 u_y(x,t)^2\,dx\right)^{1/2}.$$
Now, using the decomposition $|\nabla u|^2=u_x^2+u_y^2$, we obtain
\[\begin{split}
\lambda_1(\O;\Dr)=\frac{\int_\O|\nabla u(x,y)|^2\,dx\,dy}{\int_\O u^2(x,y)\,dx\,dy}
&\ge\frac{\int_0^1\left(|h'(t)|^2+\int u_x^2(x,t)\,dx\right)\,dt}{\int_0^1 h^2(t)\,dt}\\
&\ge\frac{\int_0^1\left(|h'(t)|^2+\frac{\pi^2 h(t)^2}{l(t)^2}\right)\,dt}{\int_0^1 h^2(t)\,dt},
\end{split}\]
where the last inequality is due to the one-dimensional Faber-Krahn inequality
$$\frac{\int u_x^2(x,t)\,dx}{\int u^2(x,t)\,dx}\ge \lambda_1(\{y=t\}\cap\O)\ge \frac{\pi^2}{l(t)^2}.$$
Now we have
\be\label{minminmin}
\begin{split}
\lambda_1(\O;\Dr)\ge&\min\bigg\{\int_0^1\left(|h'(t)|^2+\frac{\pi^2 h(t)^2}{l(t)^2}\right)\,dt\\
&:\ h\in H^1(0,1),\ \|h\|_{L^2}=1,\ l\ge 0,\ \|l\|_{L^1}=c\bigg\}.
\end{split}\ee
Minimizing the right-hand side of \eqref{minminmin} first in $l$, we get 
$$\lambda_1(\O;\Dr)\ge\min\Big\{\int_0^1 |h'(t)|^2\,dt+\frac{\pi^2}{c^2}\left(\int_0^1 h(t)^{2/3}\,dt\right)^3: h\in H^1(a,b),\ \|h\|_{L^2}=1\Big\}.$$
Choosing $t_0\in(0,1)$ such that $\ds h(t_0)=\int_0^1 h^2(t)\,dt=1$, we get 
$$h^2(t)-1=h^2(t)-h^2(t_0)\le 2\int_0^1|h'(s)| h(s)\,ds\le 2\left(\int_0^1 |h'(s)|^2\,ds\right)^{1/2}.$$
Taking the square of the both sides and integrating for $t\in(0,1)$ we obtain the inequality
$$\int_0^1 h^4(t)\,dt\le 1+4\int_0^1 |h'(t)|^2\,dt,$$
with equality achieved for $h\equiv 1$. Thus we obtain 
\begin{align}
\lambda_1(\O;\Dr)\ge-\frac14+\min\Big\{\frac14\int_0^1 h(t)^4\,dt+\frac{\pi^2}{c^2}\left(\int_0^1 h(t)^{2/3}\,dt\right)^3:\ \|h\|_{L^2}=1\Big\}.\label{minlastmin}
\end{align}
Now by the Young inequality $a^p/p+b^q/q\ge ab$ with
$$\frac1p=\frac1{1+4\frac{\pi^2}{c^2}}\qquad\hbox{and}\qquad \frac1q=\frac{4\frac{\pi^2}{c^2}}{1+4\frac{\pi^2}{c^2}},$$
we obtain
\[\begin{split}
\frac14\int_0^1 h(t)^4\,dt+\frac{\pi^2}{c^2}\left(\int_0^1 h(t)^{2/3}\,dt\right)^3
&=\frac{1+4\frac{\pi^2}{c^2}}{4}\left(\frac1p\int_0^1 h(t)^4\,dt+\frac1q\left(\int_0^1 h(t)^{2/3}\,dt\right)^3\right)\\
&\ge\|h^{4/p}\|_{L^p}\|h^{2/q}\|_{L^{q/3}}\ge \|h^{(4/p+2/q)r}\|_{L^r},
\end{split}\]
where $\ds\frac1r=\frac1p+\frac3q$ and the equality holds when $h\equiv 1$. Since $c\ge 2\sqrt 2 \pi$, we have
$$2\le\frac{4+8\frac{\pi^2}{c^2}}{1+12\frac{\pi^2}{c^2}}=r\left(\frac4p+\frac2q\right),$$
and so, by the H\"older inequality we have $\|h^{(4/p+2/q)r}\|_{L^r}\ge 1$ with equality for $h\equiv 1$. Substituting in \eqref{minlastmin} we get 
$$\lambda_1(\O;\Dr)\ge \frac{\pi^2}{c^2}=\lambda_1(\O_c;\Dr).$$
\end{itemize}
\end{exam}

Proposition \ref{sopconcave} suggests that non-existence occurs when the spectral drop follows the boundary $\partial\Dr$ escaping at infinity. There are two particular cases of domains $\Dr$, for which the above situation can be avoided: \begin{itemize}
\item the case of an external domain $\Dr\subset\R^d$, i.e. a domain whose complementary $\Dr^c$ is bounded;
\item the case of an unbounded convex set $\Dr\subset\R^2$ in which a drop escaping at infinity would have less contact with the boundary $\partial\Dr$, which becomes flat at infinity. 
\end{itemize}
We treat these two cases in separate subsections. In the case of an external domain we are able to prove an existence result for a large class of spectral functionals $\F(\O)$, while in the case of a convex set we focus on the first eigenvalue $\lambda_1(\O;\Dr)$.

\subsection{Spectral drop in an external domain}

In this subsection we prove the existence of optimal sets for general spectral functionals $\F(\O)$ in a domain $\Dr\subset\R^d$, whose complementary $\Dr^c$ is a bounded set. The lack of the compact inclusion $H^1(\Dr)\hookrightarrow L^2(\Dr)$ adds significant difficulties to the existence argument since one has to study the qualitative behaviour of the minimizing sequences. Even in the simplest case $\Dr=\R^d$, in which the Neumann boundary $\partial\Dr$ vanishes, the question was solved only recently by Bucur \cite{bulbk} and Mazzoleni-Pratelli \cite{mp}. There are basically three different methods to deal with the lack of compactness:
\begin{itemize}
\item The first approach is based on a concentration-compactness argument for a minimizing sequence $\O_n$ of quasi-open sets in $\Dr$, as the one proved in \cite{buc00}. The \emph{compactness} situation leads straightforwardly to existence. The \emph{vanishing} case never occurs because this would give $\lambda_1(\O_n)\to+\infty$. The most delicate case is the \emph{dichotomy} when each set of the sequence is a union of two disjoint (and distant) quasi-open sets. At this point one notices that for spectral functionals one can run an induction argument on the number of eigenvalues that appear in the functional and their order. A crucial element of the proof is showing that the optimal sets remain bounded, thus in the case of dichotomy one can substitute the two distant quasi-open sets with optimal ones without overlapping. This approach was used in \cite{bulbk} in $\R^d$, in \cite{bubuve} in the case of an internal geometric obstacle and in \cite{bubuve2} in the case of Schr\"odinger potentials.
\item The second approach is to use the compactness of the inclusion $H^1(\Dr)\cap H^1_0(B_R)\hookrightarrow L^2(\Dr\cap B_R)$, for a ball $B_R\subset\R^d$ large enough, hence to prove the existence of an optimal domain among all quasi-open sets contained in $B_R$. Then prove that there is a \emph{uniform} bound on the diameter of the optimal sets. This approach was used in \cite{mp}.
\item The last approach consists in taking a minimizing sequence and modifying each of the domains, obtaining another minimizing sequence of uniformly bounded sets. One can choose a well behaving minimizing sequence by considering an auxiliary shape optimization problem in each of the quasi-open sets of the original minimizing sequence and then prove that the optimal sets have \emph{uniformly} bounded diameter. This is the method that was used in \cite{tesi} in $\R^d$ and the one we will use below in the case of general external domain $\Dr$.
\end{itemize}

As we saw above, the boundedness of the optimal sets is a fundamental step of the existence proof. For this, we will need the following notion of a \emph{shape subsolution}.

\begin{deff}
Let $\F$ be a functional on the family of quasi-open sets in $\Dr$. We say that $\O\subset\Dr$ is a \emph{shape subsolution} (or just \emph{subsolution}) for $\F$ if it satisfies
\be\label{subineq}
\F(\O)\le\F(\omega),\quad\hbox{for every quasi-open set}\quad\omega\subset\O.
\ee
We say that $\O$ is a \emph{local subsolution} if \eqref{subineq} holds for quasi-open sets $\omega\subset\O$ such that $\O\setminus\omega$ is contained in a ball of radius less than some fixed $\eps>0$.
\end{deff}

\begin{lemma}
Suppose that the quasi-open set $\O\subset\Dr$ is a subsolution for the functional $\F=F\big(\lambda_{1}(\O;\Dr),\dots,\lambda_{k}(\O;\Dr)\big)+\Lambda|\O|$, where $F:\R^k\to\R$ is a locally Lipschitz continuous function. Then $\O$ is a local subsolution for the functional $\G(\O)=E_1(\O;\Dr)+\Lambda'|\O|$, where the constants $\Lambda'$ and $\eps$ depend on $\Dr$, $F$, $\Lambda$, $\lambda_{k_1}(\O;\Dr),\dots,\lambda_{k_p}(\O;\Dr)$ and $|\O|$. 
\end{lemma}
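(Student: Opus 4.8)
The idea is that the subsolution property for $\F$, combined with the local Lipschitz character of $F$ and the monotonicity $\lambda_j(\omega;\Dr)\ge\lambda_j(\O;\Dr)$ for $\omega\subset\O$, reduces everything to a \emph{linear} comparison between the increment of the eigenvalues and the increment of the energy $E_1$ produced by removing a small piece of $\O$. The hypothesis that $\O$ is a subsolution for $\F$ reads, for every quasi-open $\omega\subset\O$,
$$\Lambda\,(|\O|-|\omega|)\ \le\ F\big(\lambda_1(\omega;\Dr),\dots,\lambda_k(\omega;\Dr)\big)-F\big(\lambda_1(\O;\Dr),\dots,\lambda_k(\O;\Dr)\big),$$
and the goal is to bound the right-hand side from above by a constant multiple of $E_1(\omega;\Dr)-E_1(\O;\Dr)=\frac12\int_\Dr(w_\O-w_\omega)\,dx\ge0$.

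The main step is the estimate
$$0\ \le\ \lambda_j(\omega;\Dr)-\lambda_j(\O;\Dr)\ \le\ C_0\int_\Dr(w_\O-w_\omega)\,dx,\qquad j=1,\dots,k,$$
valid whenever $\int_\Dr(w_\O-w_\omega)\,dx$ is small, with $C_0$ depending only on $\Dr$, $|\O|$ and $\lambda_k(\O;\Dr)$. I would prove it through the resolvents $R_\O,R_\omega$, whose eigenvalues are the inverses $\mu_j=\lambda_j^{-1}$. Since $H^1_0(\omega;\Dr)\subset H^1_0(\O;\Dr)$, the variational identity $\langle R_\O f,f\rangle=\max_{\phi\in H^1_0(\O;\Dr)}\{2\int_\Dr f\phi-\int_\Dr|\nabla\phi|^2\}$ gives the operator inequality $0\le R_\omega\le R_\O$, and the associated Green functions satisfy $0\le G_\omega\le G_\O$ by the maximum principle. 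Testing the max--min characterization of $\mu_j(\omega)$ with the span $S$ of the first $j$ eigenfunctions $u_1,\dots,u_j$ of $\O$ yields $\mu_j(\O)-\mu_j(\omega)\le\max_{u\in S,\,\|u\|_{L^2}=1}\langle(R_\O-R_\omega)u,u\rangle$. Because $R_\O-R_\omega$ has the non-negative kernel $G_\O-G_\omega$ supported in $\O\times\O$, and every unit $u\in S$ satisfies $\|u\|_{L^\infty}\le\sqrt{j}\,\max_i\|u_i\|_{L^\infty}=:C$ (here the eigenfunction bound of Proposition \ref{unibndlbk} enters), one gets $\langle(R_\O-R_\omega)u,u\rangle\le C^2\langle(R_\O-R_\omega)\ind_\O,\ind_\O\rangle=C^2\int_\Dr(w_\O-w_\omega)\,dx$, using $R_\O(\ind_\O)=w_\O$ and $R_\omega(\ind_\O)=w_\omega$. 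Dividing by $\mu_j(\O)\mu_j(\omega)$ and using that $\lambda_j(\omega;\Dr)\le2\lambda_j(\O;\Dr)$ for small increments yields the claimed linear bound.

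Finally I would combine the three ingredients and absorb the smallness requirement into the radius $\eps$ by a dichotomy, thereby avoiding any geometric estimate on $\int_\Dr(w_\O-w_\omega)\,dx$. Fix $\eta>0$ so small that $F$ is $L$-Lipschitz on the cube $\{|x_j-\lambda_j(\O;\Dr)|\le2C_0\eta\}$ and that the above estimate holds for $\int_\Dr(w_\O-w_\omega)\,dx\le2\eta$; set $\Lambda'=\Lambda/(2LkC_0)$ and choose $\eps$ with $\Lambda'\,|B_\eps|\le\eta$. For a competitor with $\O\setminus\omega\subset B_\eps$ there are two cases. If $E_1(\omega;\Dr)-E_1(\O;\Dr)\ge\eta$, then $\Lambda'|\O\setminus\omega|\le\Lambda'|B_\eps|\le\eta\le E_1(\omega;\Dr)-E_1(\O;\Dr)$ directly. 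Otherwise $\int_\Dr(w_\O-w_\omega)\,dx<2\eta$, so the eigenvalues $\lambda_j(\omega;\Dr)$ stay in the Lipschitz neighborhood, and chaining the subsolution inequality, the Lipschitz bound and the linear estimate gives $\Lambda|\O\setminus\omega|\le 2LkC_0\,(E_1(\omega;\Dr)-E_1(\O;\Dr))$. In both cases $E_1(\O;\Dr)+\Lambda'|\O|\le E_1(\omega;\Dr)+\Lambda'|\omega|$, i.e. $\G(\O)\le\G(\omega)$, which is the desired local subsolution property.

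The main obstacle is the linear spectral--energy estimate. The naive test-function construction $\big(|u_j|-M_jz\big)^+$ with $z=w_\O-w_\omega$ controls the first eigenvalue linearly but loses a square root on the higher ones through the gradient cross-terms, so the clean linear rate for all $\lambda_j$ really relies on the resolvent comparison together with the kernel domination $G_\omega\le G_\O$ and the uniform $L^\infty$ bound on the eigenfunctions. Keeping the perturbed eigenvalues inside the region where $F$ is Lipschitz is the second delicate point, handled here by the dichotomy on the size of $E_1(\omega;\Dr)-E_1(\O;\Dr)$.
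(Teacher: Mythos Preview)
Your argument is correct and follows the same route as the paper. The core of both proofs is the linear comparison
\[
\lambda_j(\O;\Dr)^{-1}-\lambda_j(\omega;\Dr)^{-1}\ \le\ C\big(E_1(\omega;\Dr)-E_1(\O;\Dr)\big),
\]
which the paper simply cites from \cite{bulbk} and \cite{tesi}, whereas you supply a self-contained proof via the resolvent inequality $0\le R_\omega\le R_\O$, the kernel domination $G_\omega\le G_\O$, and the $L^\infty$ bound on eigenfunctions. Both then plug this into the Lipschitz bound for $F$ and the subsolution inequality; your chain is the same as the paper's displayed computation, just with $\lambda_j(\omega)\lambda_j(\O)$ already absorbed into the constant $C_0$.

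The one place where you are more explicit is the passage from ``small energy increment'' to ``local in the sense of $\O\setminus\omega\subset B_\eps$''. The paper's proof stops after the displayed chain, leaving implicit that the factor $\sum_i\lambda_i(\omega;\Dr)\lambda_i(\O;\Dr)$ is controlled only for $\omega$ close to $\O$; your dichotomy on whether $E_1(\omega;\Dr)-E_1(\O;\Dr)\ge\eta$ makes this step precise and also justifies staying inside the region where $F$ is Lipschitz. That is a genuine improvement in clarity, not a different method.
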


\begin{proof}
Let $\omega\subset\O$ be a quasi-open set. By \cite[Lemma 3]{bulbk} or \cite[Lemma 3.7.7]{tesi}, we have the estimate
\be\label{dorinest}
\lambda_k(\O;\Dr)^{-1}-\lambda_k(\omega;\Dr)^{-1}\le C\big(E_1(\omega;\Dr)-E_1(\O;\Dr)\big),
\ee
where $C$ is a constant depending on the sum $\sum_{i=1}^k\|u_i\|_\infty$.
By the sub-optimality of $\O$ and the estimate \eqref{dorinest} we have
\[\begin{split}
\Lambda|\O\setminus\omega|&\le F\big(\lambda_{1}(\omega;\Dr),\dots,\lambda_{k}(\omega;\Dr)\big)-F\big(\lambda_{1}(\O;\Dr),\dots,\lambda_{k}(\O;\Dr)\big)\\
&\le L\sum_{i=1}^k\big(\lambda_{i}(\omega;\Dr)-\lambda_{i}(\O;\Dr)\big)\\
&= L\sum_{i=1}^k\lambda_{i}(\omega;\Dr)\lambda_{i}(\O;\Dr)\big(\lambda_{i}(\O;\Dr)^{-1}-\lambda_{i}(\omega;\Dr)^{-1}\big)\\
&\le LC\big(E_1(\omega;\Dr)-E_1(\O;\Dr)\big)\sum_{i=1}^k\lambda_{i}(\omega;\Dr)\lambda_{i}(\O;\Dr),
\end{split}\]
where $L$ is the Lipschitz constant of $F$ and $C$ is the constant from \eqref{dorinest}.
\end{proof}

The following lemma is classical and a variant was first proved by Alt and Caffarelli in \cite{altcaf}, for a precise statement we refer to \cite{bulbk} and \cite{bucve}.

\begin{lemma}\label{altcaflem}
Suppose that the quasi-open set $\O\subset\Dr$ is a local subsolution for the functional $\G(\O)=E_1(\O;\Dr)+\Lambda|\O|$. Then there are constants $r_0>0$ and $c>0$, depending on $\Lambda$ and $\eps$, such that the following implication holds
$$\Big(\mean{B_r(x_0)}{w_\O\,dx}\le cr\Big)\ \Rightarrow\ \Big(w_\O=0\ \hbox{in}\ B_{r/4}(x_0)\Big),$$
for every $x_0\in\Dr$ and $0<r\le r_0$ such that $B_r(x_0)\subset \Dr$. 
\end{lemma}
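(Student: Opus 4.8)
The plan is to convert the local subsolution property into a quantitative energy inequality and then run the classical Alt--Caffarelli density iteration. Throughout set $w=w_\O$ and fix $x_0$ and $0<r\le r_0$ with $B_r(x_0)\subset\Dr$, choosing $r_0$ small (depending on $\eps$) so that any competitor modifying $w$ only inside $\overline{B_{r/4}}(x_0)$ is admissible in the definition of local subsolution. First I would unravel the inequality $\G(\O)\le\G(\omega)$, for $\omega\subset\O$ with $\O\setminus\omega$ contained in a ball of radius $<\eps$, into the basic bound
$$\Lambda\,|\O\setminus\omega|\le E_1(\omega;\Dr)-E_1(\O;\Dr).$$

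Next I would produce a good competitor. Let $\phi$ be a Lipschitz cut-off with $\phi=0$ on $B_{r/4}(x_0)$, $\phi=1$ off $B_{r/2}(x_0)$ and $|\nabla\phi|\le C/r$, and set $z=(1-\phi)w\in H^1_0(\O;\Dr)$ and $v=\phi w\in H^1_0(\omega;\Dr)$, where $\omega=\O\setminus\overline{B_{r/4}}(x_0)$. Using $v$ to test the minimality defining $E_1(\omega;\Dr)$ and the torsion identity $\int_\Dr\nabla w\cdot\nabla z\,dx=\int_\Dr z\,dx$ to integrate by parts, the cross term cancels and I obtain
$$E_1(\omega;\Dr)-E_1(\O;\Dr)\le\frac12\int_\Dr|\nabla z|^2\,dx.$$
Combined with the previous display this gives the key density bound $\Lambda\,|\O\cap B_{r/4}(x_0)|\le\frac12\int_\Dr|\nabla((1-\phi)w)|^2\,dx$.

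I would then control the right-hand side by a Caccioppoli inequality: testing the torsion equation with $\eta^2 w$, where $\eta=1-\phi$, yields $\int\eta^2|\nabla w|^2\le C\int_{B_{r/2}}w\,dx+\frac{C}{r^2}\int_{B_{r/2}}w^2\,dx$, hence the same bound for $\int_\Dr|\nabla z|^2\,dx$. To bring in the hypothesis I would bound $\|w\|_{L^\infty(B_{r/2})}$ by comparing $w$ on a slightly larger concentric ball with the sum of its harmonic replacement and the torsion function of that ball, using $\Delta w+\ind_{\{w>0\}}\ge0$ from Proposition \ref{proene}(c) together with Harnack and a well-chosen radius; this gives $\sup_{B_{r/2}}w\le C\mean{B_r(x_0)}{w\,dx}+Cr^2$. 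Under the assumption $\mean{B_r(x_0)}{w\,dx}\le cr$ and for $r\le r_0$ small, the right-hand side is $\le Ccr$, so that $\int_{B_{r/2}}w^2\le Ccr\int_{B_{r/2}}w$ and, inserting $\int_{B_{r/2}}w\le Ccr^{d+1}$, I arrive at the small-density estimate
$$\frac{|\O\cap B_{r/4}(x_0)|}{|B_{r/4}(x_0)|}\le\frac{Cc^2}{\Lambda}.$$

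The hard part will be the final step, upgrading this smallness of the Lebesgue density to the pointwise conclusion $w\equiv0$ in $B_{r/4}(x_0)$; this is precisely the content of the Alt--Caffarelli density estimate. I would combine the Caccioppoli inequality above with the Sobolev inequality of Proposition \ref{gencondprop}(ii), whose constant improves because $|\{w>0\}\cap B_{r/2}|$ is now small, to obtain a self-improving inequality for $\int_{B_\rho}|\nabla w|^2\,dx$ along a sequence of concentric shrinking balls; taking $c$ (hence the density) small enough makes this iteration contract and forces $\nabla w=0$, so that $w$ is constant and therefore identically zero on $B_{r/4}(x_0)$. The delicate points here, namely the uniformity of the Sobolev constant along the iteration and the treatment of the lower-order torsion term, are handled exactly as in \cite{altcaf}, \cite{bulbk} and \cite{bucve}, to which the statement refers.
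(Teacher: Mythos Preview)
The paper does not prove this lemma: it only states it and refers to \cite{altcaf}, \cite{bulbk} and \cite{bucve} for the proof. Your sketch reproduces precisely the argument contained in those references (competitor $v=\phi w$, the identity $E_1(\omega;\Dr)-E_1(\O;\Dr)\le\frac12\int|\nabla((1-\phi)w)|^2$, Caccioppoli, the sup bound coming from $\Delta w+\ind_{\{w>0\}}\ge0$, and the Alt--Caffarelli density iteration), so your approach is exactly what the paper invokes; there is nothing to compare.

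One small comment on the final step: the iteration in \cite{altcaf} and \cite{bucve} is usually phrased as a decay of the rescaled quantity $r^{-1}\sup_{B_r}w$ (or $r^{-1}\meantext{B_r}w$) rather than of $\int_{B_\rho}|\nabla w|^2$, and this is what directly reproduces the hypothesis at the next scale and makes the argument close cleanly; your formulation via the gradient integral is essentially equivalent but slightly less transparent, and since you defer the details to the same references anyway this is harmless.
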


The following Lemma was proved in \cite{bucve} in the case $\Dr=\R^d$.

\begin{lemma}\label{unifdiambnd}
Suppose that the quasi-open set $\O\subset\Dr$ is a local subsolution for the functional
$$\G(\O)=E_1(\O;\Dr)+\Lambda|\O|.$$
Then $\O$ is a bounded set. Moreover, for $r>0$ small enough the set
$$\O_r:=\O\cap\big\{x\in\Dr\ :\ \dist(x,\partial\Dr)>2r\big\},$$ 
can be covered by $N_r$ balls of radius $r$, where the number of balls $N_r$ depends on $\eps$, $\Lambda$ and $\Dr$. 
\end{lemma}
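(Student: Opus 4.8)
The proof rests entirely on the density estimate of Lemma~\ref{altcaflem}, which I would first convert into a lower bound on the mass of the energy function near every point of $\O$. Fix the representative $\O=\{w_\O>0\}$ from Proposition~\ref{proene}(e) and recall from Proposition~\ref{proene}(d) that every point of $\Dr$ is a Lebesgue point of $w_\O$. Then for every $x_0\in\O$ and every $\rho\in(0,r_0]$ with $B_\rho(x_0)\subset\Dr$ I claim
$$\int_{B_\rho(x_0)}w_\O\,dx\ge c\,\omega_d\,\rho^{d+1},\qquad\text{where }\omega_d:=|B_1|.$$
Indeed, $w_\O(x_0)>0$ together with $x_0$ being a Lebesgue point shows that $w_\O$ does not vanish on $B_{\rho/4}(x_0)$, so the contrapositive of Lemma~\ref{altcaflem} gives $\mean{B_\rho(x_0)}{w_\O\,dx}>c\rho$, which is the displayed inequality.

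Next I would bound the covering number of $\O_r$ by a Vitali-type packing argument. For $r>0$ small enough that $r/2\le r_0$, choose a maximal family of points $x_1,\dots,x_N\in\O_r$ whose balls $B_{r/2}(x_i)$ are pairwise disjoint; by maximality $\O_r\subset\bigcup_{i=1}^N B_r(x_i)$, so it suffices to estimate $N$. Since $\dist(x_i,\partial\Dr)>2r$, each $B_{r/2}(x_i)$ is contained in $\Dr$, and the density bound applies at radius $r/2$. Disjointness then yields
$$\int_{\Dr}w_\O\,dx\ge\sum_{i=1}^N\int_{B_{r/2}(x_i)}w_\O\,dx>N\,c\,\omega_d\,(r/2)^{d+1},$$
whence $N\le 2^{d+1}\big(c\,\omega_d\,r^{d+1}\big)^{-1}\int_\Dr w_\O\,dx$. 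The total mass is finite, e.g.\ $\int_\Dr w_\O\,dx\le\|w_\O\|_{L^\infty}|\O|\le C(\Dr,|\O|)|\O|$ by Proposition~\ref{proene}(b), and the constant $c$ of Lemma~\ref{altcaflem} depends only on $\Lambda$ and $\eps$; this gives a bound on $N_r=N$ in terms of $\eps$, $\Lambda$, $\Dr$ (and $|\O|$), proving the covering claim and, in particular, that $\O_r$ is bounded.

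Finally, to pass from $\O_r$ to $\O$ I would invoke the standing hypothesis of this subsection that $\Dr^c$ is bounded, say $\partial\Dr\subset B_{R_0}$. Then the boundary layer satisfies $\O\setminus\O_r\subset\{x\in\Dr:\dist(x,\partial\Dr)\le 2r\}\subset B_{R_0+2r}$ and is therefore bounded, so $\O=\O_r\cup(\O\setminus\O_r)$ is bounded. The one genuinely delicate point is exactly this boundary layer: Lemma~\ref{altcaflem} requires $B_r(x_0)\subset\Dr$ and hence furnishes no density information for points of $\O$ within distance $2r$ of $\partial\Dr$. This is precisely why the boundedness of $\O$ (as opposed to merely $\O_r$) must rely on $\partial\Dr$ being bounded; without that hypothesis the conclusion can fail, as the half-space and strip examples already show.
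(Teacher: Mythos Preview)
Your argument is correct and follows the same strategy as the paper: convert Lemma~\ref{altcaflem} into a lower density bound $\int_{B_\rho(x_0)}w_\O\,dx\ge c\,\omega_d\,\rho^{d+1}$ at every point of $\O_r$, then run a packing/covering argument to bound the number of balls by $\int_\Dr w_\O\,dx$. The only cosmetic difference is that the paper picks the $x_i$ greedily at mutual distance $\ge 2r$ (so the balls $B_r(x_i)$ are disjoint and the covering is by balls $B_{2r}(x_i)$), whereas you take a maximal $r/2$-packing and cover by $B_r(x_i)$; your version also spells out the boundary-layer step $\O\setminus\O_r\subset B_{R_0+2r}$ to deduce that $\O$ itself is bounded, which the paper leaves implicit.
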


\begin{proof}
We construct a sequence $(x_n)_{n\ge 1}$ as follows: choose $x_1\in\O_r$; given $x_1,\dots,x_{n_1}$, we choose $x_n\in \O_r\setminus\left(\bigcup_{i=1}^{n-1}B_{2r}(x_i)\right)$. We notice that, by construction $w_\O(x_n)>0$ and that the balls $B_{r}(x_i)$ are pairwise disjoint for $i=1,\dots,n$. Thus, by Lemma \ref{altcaflem}, we have that 
$$\int_{\Dr}w_\O\,dx\le \sum_{i=1}^n \int_{B_r(x_i)}w_\O\,dx\le n c \omega_d r^{d+1},$$
and so, if $N$ is the largest integer such that
$$N\le \frac{1}{c\omega_d r^{d+1}}\int_{\Dr}w_\O\,dx,$$
the sequence $x_n$ can have at most $N$ elements.
\end{proof}

We are now in position to prove our main existence result in an external domain $\Dr\subset\R^d$.

\begin{teo}\label{exbound}
Assume that $D$ is an external domain, that is an open set satisfying \eqref{assdr} with bounded complementary $\Dr^c$, and that the function $F:\R^k\to\R$ is increasing and Lipschitz continuous. Then the shape optimization problem
\be\label{sopNDext}
\min\Big\{F\big(\lambda_{1}(\O;\Dr),\dots,\lambda_{k}(\O;\Dr)\big)+\Lambda|\O|\ :\ \O\hbox{ quasi-open, }\O\subset D\Big\},
\ee
has a solution. Moreover, every solution of \eqref{sopNDext} is a bounded set.
\end{teo}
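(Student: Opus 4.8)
The plan is to follow the third strategy outlined above: starting from a minimizing sequence, replace each set by a shape subsolution that is still minimizing, show these subsolutions are confined to a fixed ball, and then invoke the compactness available in a bounded box. The key structural feature is that the measure enters \eqref{sopNDext} as a penalization rather than a constraint, so every minimizer is automatically a subsolution for $\F=F\big(\lambda_1(\cdot;\Dr),\dots,\lambda_k(\cdot;\Dr)\big)+\Lambda|\cdot|$; this drives the whole argument.

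First I would set up the auxiliary problems. Let $\O_n$ be a minimizing sequence for \eqref{sopNDext}. Since $F$ is increasing it is bounded below on $[0,+\infty)^k$ by $F(0,\dots,0)$, so from $\F(\O_n)\le C_0$ we read off the uniform bound $\Lambda|\O_n|\le C_0-F(0,\dots,0)$, i.e. $|\O_n|\le M$. For each $n$ I solve the auxiliary problem $\min\{\F(\omega):\ \omega\subset\O_n,\ \omega\ \hbox{quasi-open}\}$; this minimum is attained by a set $\widehat\O_n\subset\O_n$, by the same compactness argument used in a bounded box (a minimizing sequence of subsets of the finite-measure set $\O_n$ weak-$\gamma$-converges to a subset, which is enlarged inside $\O_n$ via Proposition \ref{gwgprop} to a $\gamma$-converging sequence, and one passes to the limit using the $\gamma$-lower semicontinuity of $F(\lambda)$ together with the weak-$\gamma$-lower semicontinuity of the measure from Proposition \ref{scres}). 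Testing with $\omega=\O_n$ gives $\F(\widehat\O_n)\le\F(\O_n)$, so $(\widehat\O_n)$ is again minimizing and, by construction, each $\widehat\O_n$ is a subsolution for $\F$.

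Next come the uniform estimates. The measure bound $|\widehat\O_n|\le M$ and the cost bound $\F(\widehat\O_n)\le C_0$, together with the monotonicity of $F$, give uniform upper bounds on the eigenvalues $\lambda_i(\widehat\O_n;\Dr)$ (a diverging eigenvalue would, since $F$ is increasing, push the cost above that of a fixed reference drop), while Proposition \ref{gencondprop} keeps $|\widehat\O_n|$ away from $0$; through Proposition \ref{unibndlbk} this controls $\sum_i\|u_i\|_\infty$ uniformly. Feeding these into the first lemma of this subsection, each $\widehat\O_n$ is a local subsolution for $\G(\O)=E_1(\O;\Dr)+\Lambda'|\O|$ with constants $\Lambda',\eps$ independent of $n$. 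By Proposition \ref{proene} the quantity $\int_\Dr w_{\widehat\O_n}\,dx=-2E_1(\widehat\O_n;\Dr)$ is bounded by $\|w_{\widehat\O_n}\|_\infty|\widehat\O_n|\le C(\Dr,M)\,M$, again uniformly. Applying the density estimate of Lemma \ref{altcaflem} and the covering argument of Lemma \ref{unifdiambnd}, the number of $r$-separated points of $\widehat\O_n$ away from $\partial\Dr$ is bounded independently of $n$, and since $\Dr^c$ is bounded the part of $\widehat\O_n$ within distance $2r$ of $\partial\Dr$ lies in a fixed bounded region. The main obstacle is to upgrade this into a genuine uniform confinement: a uniform covering number does not by itself prevent the pieces from drifting apart (dichotomy). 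Here the boundedness of the obstacle $\Dr^c$ is essential — far from $\Dr^c$ the domain looks like $\R^d$, so a component escaping to infinity would be a Dirichlet subsolution in $\R^d$, and the known uniform diameter bounds for such subsolutions (as in \cite{bucve,tesi}), combined with an exchange argument, rule out splitting; the conclusion is that all $\widehat\O_n$ lie in a fixed ball $B_{R^*}$.

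Once the sets are confined to the bounded box $\O_0=\Dr\cap B_{R^*}$, the argument closes by compactness. Up to a subsequence $\widehat\O_n$ weak-$\gamma$-converges to a quasi-open set $\O\subset\O_0$; enlarging via Proposition \ref{gwgprop} to a sequence $\widetilde\O_n\supset\widehat\O_n$ inside $\O_0$ that $\gamma$-converges to $\O$, the $\gamma$-continuity of the eigenvalues gives $\lambda_i(\O;\Dr)=\lim_n\lambda_i(\widetilde\O_n;\Dr)\le\liminf_n\lambda_i(\widehat\O_n;\Dr)$, while Proposition \ref{scres} gives $|\O|\le\liminf_n|\widehat\O_n|$. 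Since $F$ is increasing and continuous this yields $\F(\O)\le\liminf_n\F(\widehat\O_n)=m$, so $\O$ solves \eqref{sopNDext}. Finally, every solution $\O$ is a subsolution for $\F$ (the measure being penalized), hence a local subsolution for $\G$ by the first lemma, hence bounded by Lemma \ref{unifdiambnd}; this proves the last assertion.
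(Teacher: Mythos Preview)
Your overall architecture matches the paper's proof almost exactly: pass to auxiliary minimizers $\widehat\O_n\subset\O_n$ which are shape subsolutions, use the first lemma of the subsection to promote them to local subsolutions for $E_1(\cdot;\Dr)+\Lambda'|\cdot|$ with \emph{uniform} constants (your care in bounding the eigenvalues and the eigenfunctions via Proposition~\ref{unibndlbk} is needed here, and the paper leaves this implicit), apply Lemma~\ref{unifdiambnd} to get a uniform covering of $\widehat\O_n$ away from $\partial\Dr$ by $N_r$ balls, and then pass to the limit in a fixed bounded box using weak-$\gamma$-compactness and Proposition~\ref{scres}. Your final paragraph, deducing boundedness of \emph{any} minimizer directly from the subsolution property, is also correct and is only implicit in the paper.

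The one place where your argument is genuinely incomplete is the compactification step. You correctly identify the obstacle: a uniform bound on the number of covering balls does not prevent those balls from drifting to infinity. But your resolution --- ``a component escaping to infinity would be a Dirichlet subsolution in $\R^d$, and the known uniform diameter bounds for such subsolutions, combined with an exchange argument, rule out splitting'' --- is too vague to be a proof. It is not clear what the exchange is, nor why a distant component of $\widehat\O_n$ should itself be a subsolution for a functional of the form $E_1+\Lambda'|\cdot|$ (the subsolution inequality for $\widehat\O_n$ mixes the spectra of all components, so it does not localize in an obvious way). The paper sidesteps this entirely with a much simpler device: since the $N_r$ covering balls that lie outside $\Dr^c+B_1$ carry pieces of $\widehat\O_n$ with purely Dirichlet boundary conditions, one can \emph{translate} those pieces rigidly so that all of $\widehat\O_n$ sits in a single ball $B_R$ of radius depending only on $N_r$, $r$ and $\diam(\Dr^c)$; translation preserves both the measure and the spectrum, so the translated sequence is still minimizing and is now confined. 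Replacing your ``exchange argument'' by this explicit translation closes the gap and makes the proof complete.
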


\begin{proof}
Let $\O_n$ be a minimizing sequence for \eqref{sopNDext}. Since each of the quasi-open sets $\O_n$ has finite measure, we have that weak-$\gamma$-convergence is compact in $\O_n$ and so, the shape optimization problem
$$\min\Big\{F\big(\lambda_{1}(\O;\Dr),\dots,\lambda_{k}(\O;\Dr)\big)+\Lambda|\O|\ :\ \O\hbox{ quasi-open, }\O\subset\O_n\Big\},$$
has at least one solution $\widetilde\O_n$. Since 
$$F\big(\lambda_{1}(\widetilde\O_n;\Dr),\dots,\lambda_{k}(\widetilde\O_n;\Dr)\big)+\Lambda|\widetilde\O_n|\le F\big(\lambda_{1}(\O_n;\Dr),\dots,\lambda_{k}(\O_n;\Dr)\big)+\Lambda|\O_n|,$$
we have that the sequence $\widetilde\O_n$ is also minimizing. Moreover, each of the sets $\widetilde\O_n$ is a subsolution for $\F$ and so, a local subsolution for $\G(\O)=E_1(\O;\Dr)+\Lambda'|\O|$. By Lemma \ref{unifdiambnd}, we can cover the set $\widetilde\O_n\setminus(\Dr^c+B_1)$ by a finite number of balls of radius, which does not depend on $n$. Setting $A_n$ to be the open set obtained as a union of these balls, we can translate the parts of $\widetilde\O_n$ contained in the different connected components of $A_n$ obtaining a new set, which we still denote by $\widetilde\O_n$ and which has the same measure and spectrum. Moreover, we now have that $\widetilde\O_n\subset B_R$, for some $R>0$ large enough. Again, by the compactness of the weak-$\gamma$-convergence in $B_R$, we have that up to a subsequence $\widetilde\O_n$ weak-$\gamma$-converges to a set $\widetilde\O\subset\Dr\cap B_R$. By the semi-continuity of $\lambda_k(\cdot;\Dr)$ and the Lebesgue measure (Proposition \ref{scres}), we have 
$$F\big(\lambda_{1}(\widetilde\O;\Dr),\dots,\lambda_{k}(\widetilde\O;\Dr)\big)+\Lambda|\widetilde\O|\le\liminf_{n\to\infty}\Big\{F\big(\lambda_{1}(\widetilde\O_n;\Dr),\dots,\lambda_{k}(\widetilde\O_n;\Dr)\big)+\Lambda|\widetilde\O_n|\Big\},$$
which proves that $\widetilde\O$ is a solution of \eqref{sopNDext}.
\end{proof}

\begin{oss}
By arguments similar to the ones used in Remarks \ref{lb1touch} and \ref{lb1orth} we obtain that the optimal domain $\O^\ast$ for the functional $\F(\O)=\lambda_1(\O;\Dr)$ satisfies the following properties:
\begin{itemize}
\item the free boundary $\Dr\cap \partial\O^\ast$ is smooth;
\item $\O^\ast$ must touch the boundary $\partial\Dr$;
\item if $\Dr$ is smooth, then the boundary of $\O^\ast$ intersects $\partial\Dr$ orthogonally. 
\end{itemize}
\end{oss}

\subsection{A spectral drop in unbounded convex plane domains}

In this subsection we consider the case when $\Dr$ is an unbounded convex domain in $\R^2$. We note that the unbounded convex sets in $\R^2$ can be reduced to the following types:
\begin{itemize}
\item a strip $\Dr=(a,b)\times \R$;
\item an epigraph of a convex function $\vf:\R\to\R$;
\item an epigraph of a convex function $\vf:(a,b)\to\R$.
\end{itemize}

In order to prove the existence of an optimal set we argue as in the case of external domains and we consider the following penalized version of the shape optimization problem:
\be\label{sopNDpen}
\min\Big\{\lambda_1(\O;\Dr)+\Lambda|\O|\ :\ \O\subset\Dr,\ \O\hbox{ quasi-open}\Big\}.
\ee

In what follows we will concentrate our attention to the third case when the convex domain is an epigraph of a convex function defined on the entire line $\R$. 

Since we are in two dimensions the uniform bound on the minimizing sequence is easier to achieve through an estimate on the perimeter $P(\O;\Dr)$. The following result was proved in \cite{bulbk}.

\begin{lemma}
Suppose that the quasi-open set $\O\subset\Dr$ is a subsolution for the functional $\F(\O)=\lambda_1(\O)+\Lambda|\O|$. Then $\O$ has finite perimeter and 
$$P(\O;\Dr)\le \Lambda^{-1/2}\lambda_1(\O;\Dr)|\O|^{1/2}.$$
\end{lemma}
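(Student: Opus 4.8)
The plan is to test the subsolution inequality against the super-level sets of the first eigenfunction and to convert the resulting information into a perimeter bound through the coarea formula and the Cauchy--Schwarz inequality. Let $u\ge0$ be the first normalized eigenfunction of $\O$, so that $\int_\O u^2\,dx=1$, $\int_\O|\nabla u|^2\,dx=\lambda_1(\O;\Dr)=:\lambda_1$ and $-\Delta u=\lambda_1 u$ in $\O$ with Neumann conditions on $\partial\Dr$. For $t>0$ set $\O_t:=\{u>t\}$; this is a quasi-open subset of $\O$ and hence an admissible competitor in \eqref{subineq}. Writing $\mu(t):=|\O_t|$, the coarea formula gives $-\mu'(t)=\int_{\{u=t\}\cap\Dr}|\nabla u|^{-1}\,d\HH^{d-1}$ for a.e.\ $t$, while integrating the eigenvalue equation over $\O_t$ (the boundary term on $\partial\Dr$ vanishing by the Neumann condition) yields the flux identity
\[
F(t):=\int_{\{u=t\}\cap\Dr}|\nabla u|\,d\HH^{d-1}=\lambda_1\int_{\O_t}u\,dx .
\]

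First I would extract a nondegeneracy relation from the subsolution property. Testing $\lambda_1(\O_t;\Dr)$ with $(u-t)^+\in H^1_0(\O_t;\Dr)$ and using $\int_{\O_t}|\nabla u|^2\,dx=\lambda_1\int_{\O_t}u(u-t)\,dx$ (again from the equation) gives
\[
\lambda_1(\O_t;\Dr)-\lambda_1\le\frac{\int_{\O_t}|\nabla u|^2\,dx}{\int_{\O_t}(u-t)^2\,dx}-\lambda_1=\frac{\lambda_1\,t\int_{\O_t}(u-t)\,dx}{\int_{\O_t}(u-t)^2\,dx}.
\]
On the other hand \eqref{subineq} with $\omega=\O_t$ reads $\Lambda\big(\mu(0)-\mu(t)\big)\le\lambda_1(\O_t;\Dr)-\lambda_1$. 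Dividing by $t$ and letting $t\to0^+$ (so that $\int_{\O_t}(u-t)^2\to1$ and $\int_{\O_t}(u-t)\to\int_\O u$) produces the boundary nondegeneracy inequality $\Lambda\,(-\mu'(0^+))\le F(0^+)$, that is $\Lambda\int_{\partial\O\cap\Dr}|\nabla u|^{-1}\,d\HH^{d-1}\le\int_{\partial\O\cap\Dr}|\nabla u|\,d\HH^{d-1}$.

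It then remains to combine this with the coarea formula. For a.e.\ regular level $t>0$ one has $P(\O_t;\Dr)=\HH^{d-1}(\{u=t\}\cap\Dr)$, and Cauchy--Schwarz gives $P(\O_t;\Dr)^2\le F(t)\,(-\mu'(t))$. Using the nondegeneracy bound $\Lambda(-\mu'(t))\le F(t)$ together with $F(t)=\lambda_1\int_{\O_t}u\le\lambda_1|\O|^{1/2}$ (Cauchy--Schwarz and $\int u^2\le1$) yields $P(\O_t;\Dr)\le\Lambda^{-1/2}\lambda_1|\O|^{1/2}$; passing to the limit $t\to0$ and invoking the lower semicontinuity of the relative perimeter under the $L^1$-convergence of $\O_t$ to $\O$ gives the claim, and in particular $\O$ has finite perimeter.

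The main obstacle is the rigorous justification of this limiting step for a general quasi-open subsolution. Since $u$ is smooth inside $\O$ by elliptic regularity, almost every $t>0$ is a regular value by Sard's theorem, so the level-set identities above are legitimate for a.e.\ $t$; the delicate point is that the subsolution inequality controls only the \emph{averaged} quotient $\big(\mu(0)-\mu(t)\big)/t$, whereas the Cauchy--Schwarz step needs the \emph{pointwise} quantity $-\mu'(t)$. Reconciling these requires a careful selection of levels $t_n\to0$ and a monotonicity/ODE analysis in the spirit of Lemma \ref{infttybndflem}; this is exactly the technical content carried out in \cite{bulbk}.
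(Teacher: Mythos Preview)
The paper does not prove this lemma; it simply attributes the result to \cite{bulbk}. Your ingredients are the right ones (super-level sets $\O_t=\{u>t\}$, the test function $(u-t)^+$, coarea, Cauchy--Schwarz, lower semicontinuity of the relative perimeter), and your identity
\[
\lambda_1(\O_t;\Dr)-\lambda_1\le\frac{\lambda_1\, t\int_{\O_t}(u-t)\,dx}{\int_{\O_t}(u-t)^2\,dx}
\]
is exactly the estimate that drives the argument. The weak point is the line ``using the nondegeneracy bound $\Lambda(-\mu'(t))\le F(t)$'': the subsolution property compares $\O$ only with its own subsets, so what you actually control is the \emph{integrated} quantity $\mu(0)-\mu(t)$, never $-\mu'(t)$ at a positive level. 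You flag this yourself in the last paragraph, but the resolution is not to manufacture a pointwise bound on $-\mu'$.

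The argument in \cite{bulbk} sidesteps the pointwise issue by applying Cauchy--Schwarz on the \emph{layer} $\{0<u\le\eps\}$ rather than on a single level set. From your displayed estimate (using $\int_{\O_\eps}(u-\eps)^2\to1$ and $\int_{\O_\eps}(u-\eps)\le|\O|^{1/2}$) one obtains
\[
|\{0<u\le\eps\}|\le\Lambda^{-1}\lambda_1|\O|^{1/2}\,\eps\,(1+o(1)),
\]
while subtracting $\int_{\O_\eps}|\nabla u|^2=\lambda_1\int_{\O_\eps}u(u-\eps)$ from $\int_\O|\nabla u|^2=\lambda_1$ gives
\[
\int_{\{0<u\le\eps\}}|\nabla u|^2\,dx=\lambda_1\Big(\int_{\{0<u\le\eps\}}u^2\,dx+\eps\int_{\O_\eps}u\,dx\Big)\le\lambda_1|\O|^{1/2}\,\eps\,(1+o(1)).
\]
Cauchy--Schwarz on the layer then yields
\[
\Big(\int_{\{0<u\le\eps\}}|\nabla u|\,dx\Big)^2\le|\{0<u\le\eps\}|\int_{\{0<u\le\eps\}}|\nabla u|^2\,dx\le\Lambda^{-1}\lambda_1^2|\O|\,\eps^2\,(1+o(1)),
\]
and by coarea the left-hand integral equals $\int_0^\eps P(\O_t;\Dr)\,dt$. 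Thus the \emph{average} of $P(\O_t;\Dr)$ over $(0,\eps)$ is at most $\Lambda^{-1/2}\lambda_1|\O|^{1/2}(1+o(1))$; choosing $t_\eps\in(0,\eps)$ below this average and letting $\eps\to0$ (so $\O_{t_\eps}\to\O$ in $L^1$) concludes by lower semicontinuity. Your ``careful selection of levels $t_n\to0$'' enters only at this last, harmless step --- not to rescue a pointwise inequality for $-\mu'$.
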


\begin{teo}
Let $\vf:\R\to\R$ be a convex function and let $\Dr=\{(x,y)\in\R^2:\ y>\vf(x)\}$. Then there exists a solution of the problem \eqref{sopNDpen}. Moreover, every solution $\O$ of \eqref{sopNDlb1} is a bounded open set of finite perimeter whose boundary is locally a graph of an analytic function, intersecting the boundary $\partial\Dr$ orthogonally. 
\end{teo}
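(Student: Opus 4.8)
The plan is to follow the scheme already used for external domains: extract a minimizing sequence for \eqref{sopNDpen}, replace its terms by subsolutions of an auxiliary problem, establish a uniform geometric bound on these subsolutions, and finally rule out escape to infinity so that weak-$\gamma$-compactness applies. First I would take a minimizing sequence $\O_n$ for \eqref{sopNDpen}. Since each $\O_n$ has finite measure, the restricted problem $\min\{\lambda_1(\omega;\Dr)+\Lambda|\omega|:\ \omega\subset\O_n\}$ admits a solution $\widetilde\O_n$ by the compactness of the weak-$\gamma$-convergence inside the finite-measure set $\O_n$ (Propositions \ref{gwgprop} and \ref{scres}); the inclusion $\widetilde\O_n\subset\O_n$ shows that $(\widetilde\O_n)$ is again minimizing. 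By construction each $\widetilde\O_n$ is a subsolution for $\lambda_1+\Lambda|\cdot|$, so the perimeter bound for subsolutions stated just above yields $\per(\widetilde\O_n;\Dr)\le\Lambda^{-1/2}\lambda_1(\widetilde\O_n;\Dr)|\widetilde\O_n|^{1/2}\le C$, uniformly in $n$, because $\lambda_1(\widetilde\O_n;\Dr)$ and $|\widetilde\O_n|$ stay bounded along a minimizing sequence.

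Next I would upgrade this to a uniform size bound. Each $\widetilde\O_n$ is a local subsolution for $E_1(\cdot;\Dr)+\Lambda'|\cdot|$ (by the lemma relating spectral subsolutions to energy subsolutions, applied with $F=\mathrm{id}$), so the Alt--Caffarelli density estimates of Lemma \ref{altcaflem} hold for $w_{\widetilde\O_n}$ with constants independent of $n$. Combined with the uniform perimeter bound and the two-dimensional relative isoperimetric inequality in $\Dr$, these density estimates force each $\widetilde\O_n$ to be contained, up to a horizontal translation along $\Dr$, in a disc of a fixed radius $\rho$ independent of $n$; in particular the number and the diameters of its connected components are uniformly controlled.

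The central difficulty is to prevent the drops $\widetilde\O_n$ from sliding off to infinity along $\partial\Dr$, and this is exactly where the convexity of $\Dr$ enters. I would argue that escaping is never advantageous. Since $\vf$ is convex its slope $\vf'$ is monotone with limits at $\pm\infty$, so the total turning of $\partial\Dr$ is at most $\pi$ and the curvature of $\partial\Dr$ tends to $0$ at either end; thus $\Dr$ is asymptotically a half-plane far out. A rearrangement of the type used in the proof of Proposition \ref{sopconcave} then shows that a drop supported near a nearly flat portion of $\partial\Dr$ has cost bounded below, up to an $o(1)$ error, by the value $v_H$ of the analogous penalized problem \eqref{sopNDpen} posed in the half-plane $H$, whose model minimizer is the half-ball $B_+$. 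On the other hand, because $\Dr$ is convex a drop can \emph{nestle} against a genuinely curved part of $\partial\Dr$, lowering its relative perimeter and hence its energy strictly below $v_H$; consequently the infimum in \eqref{sopNDpen} is strictly smaller than the escaping value $v_H$. Therefore, for $n$ large the optimal $\widetilde\O_n$ cannot concentrate near infinity, and after a bounded translation the whole sequence lies in a fixed bounded subset of $\Dr$. Making this dichotomy quantitative — pinning down the strict gap between the finite-location infimum and the flat-boundary value $v_H$ by exhibiting an explicit nestling competitor — is the step I expect to be the main obstacle.

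Once the sequence is confined to a fixed bounded region, the compactness of the weak-$\gamma$-convergence there gives a subsequence weak-$\gamma$-converging to some $\O\subset\Dr$, and the semicontinuity of $\lambda_1(\cdot;\Dr)$ and of the Lebesgue measure (Proposition \ref{scres}) yields $\lambda_1(\O;\Dr)+\Lambda|\O|\le\liminf_n\big(\lambda_1(\widetilde\O_n;\Dr)+\Lambda|\widetilde\O_n|\big)$, so $\O$ solves \eqref{sopNDpen}. For the regularity statement I would note that any solution $\O$ of \eqref{sopNDlb1} is, through the usual Lagrange/penalization argument, a local subsolution for $E_1(\cdot;\Dr)+\Lambda'|\cdot|$ for a suitable $\Lambda'>0$; Lemma \ref{altcaflem} and the Alt--Caffarelli theory (see \cite{altcaf,bucve}) then give that $w_\O$ is locally Lipschitz, that $\O=\{w_\O>0\}$ is open with locally finite perimeter, and that the free boundary $\partial\O\cap\Dr$ satisfies a one-phase Bernoulli condition. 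In two dimensions this free boundary has no singular points and is locally the graph of an analytic function by the regularity result of \cite{brla}; finally, the orthogonality of $\partial\O$ and $\partial\Dr$ at their contact points follows from the reflection argument of Remark \ref{lb1orth}.
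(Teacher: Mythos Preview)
Your proposal is correct and follows essentially the same route as the paper: replace a minimizing sequence by subsolutions, use the perimeter bound to confine each term to a ball of fixed radius, and then rule out escape to infinity by showing that the limiting value along a flattening boundary equals the half-plane value $v_H$, which is strictly beaten by an explicit competitor nestled at a non-flat point of $\partial\Dr$. The only notable differences are that the paper obtains the diameter bound directly from the perimeter estimate in two dimensions (rather than via Alt--Caffarelli density), and that the lower bound at infinity is obtained by restricting the eigenfunction to an inscribed half-plane and using the uniform $L^\infty$ bound of Proposition~\ref{unibndlbk} (not by a rearrangement as in Proposition~\ref{sopconcave}); your ``explicit nestling competitor'' is realized in the paper precisely by the radial rearrangement of the half-ball eigenfunction onto circular arcs $\partial B_{\rho(t)}\cap\Dr$ centered at a point where $\partial\Dr$ is curved.
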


\begin{proof}
Let $\O_n\subset\Dr$ be a minimizing sequence for \eqref{sopNDpen}. For every $\O_n$ we consider a solution $\widetilde\O_n$ of the problem 
$$\min\Big\{\lambda_1(\O;\Dr)+\Lambda|\O|:\ \O\subset\O_n,\ \O\ \hbox{quasi-open}\Big\}.$$
We first notice that $\widetilde\O_n$ is also a minimizing sequence for \eqref{sopNDpen}. Since each of the sets $\widetilde\O_n$ is a subsolution for the functional $\F(\O)=\lambda_1(\O)+\Lambda|\O|$ we have that the bound
$$P(\widetilde\O_n;\Dr)\le \Lambda^{-1/2}\lambda_1(\widetilde\O_n;\Dr)|\widetilde\O_n|^{1/2},$$
holds or every $n\in\N$. Thus, there is a universal bound on the diameter $\hbox{diam}(\widetilde\O_n)\le R<+\infty$, for all $n\in\N$. Thus, for every $\widetilde\O_n$, there is a ball $B_{R}(x_n)$ such that $\widetilde\O_n\subset B_R(x_n)$. We now consider, for every $n\in\N$, a solution $\O_n^\ast$ of the problem
$$\min\Big\{\lambda_1(\O;\Dr)+\Lambda|\O|\ :\ \O\subset B_R(x_n)\cap\Dr,\ \O\ \hbox{quasi-open}\Big\}.$$
Notice that $\O_n^\ast$ is still a minimizing sequence for \eqref{sopNDpen} and has uniformly bounded perimeter and diameter. If the sequence $x_n$ is bounded, then $\widetilde\O_n$ are all contained in a large ball $B_{R^\ast}$, which by the compactness of the weak-$\gamma$-convergence and the lower semi-continuity of the functional, gives the existence of an optimal set.

Suppose, by absurd, that (up to a subsequence) we have that $|x_n|\to+\infty$. We notice that up to translating the balls, which are entirely contained in $\Dr$ and enlarging the fixed radius $R$, we can suppose that $x_n\in\partial\Dr$, for every $n\in\N$. Now since the boundary of an unbounded convex set is getting flat at infinity, we have that there is a sequence of half-spaces $H_n\subset\R^2$ such that $H_n\cap B_R(x_n)\subset\Dr\cap B_R(x_n)$ for all $n\in\N$, and
$$\dist_{\HH}\big(B_R(x_n)\cap\partial H_n, B_R(x_n)\cap\partial\Dr\big)\xrightarrow[n\to\infty]{}0,$$
where $\dist_{\HH}$ is the Hausdorff distance between compact sets in $\R^2$.

Let now $u_n\in H^1_0(\O_n^\ast;\Dr)$ be the first normalized eigenfunction on $\O_n^\ast$ with mixed boundary conditions
$$-\Delta u_n=\lambda_1(\O_n^\ast;\Dr)u_n\ \hbox{ in }\ \O_n^\ast,\qquad \frac{\partial u_n}{\partial \nu}=0\ \ \hbox{on}\ \ \partial \Dr,\qquad u_n=0\ \ \hbox{on}\ \ \partial \O_n^\ast\cap\Dr.$$
Consider the quasi-open set $\omega_n^\ast=\O_n^\ast\cap H_n$. Then we have 
$$\lambda_1(\omega_n^\ast;H_n)\le \frac{\int_{H_n}|\nabla u_n|^2\,dx}{\int_{H_n} u_n^2\,dx}\le \frac{\lambda_1(\O_n^\ast;\Dr)}{1-\|u_n\|_\infty^2|\Dr\setminus H_n|}\le \frac{\lambda_1(\O_n^\ast;\Dr)}{1-C|\Dr\setminus H_n|},$$
where the last inequality is due to the uniform bound on the infinity norm of the eigenfunctions proved in Proposition \ref{unibndlbk}.

Let now $H=\{(x,y):\ y >0\}$, $B_{r_n}$ be the ball of measure $|\omega_n^\ast|$ centered at the origin and $r=\lim_{n\to\infty}r_n$. Then we have
\[\begin{split}
\lambda_1(B_{r}\cap H;H)+\Lambda |B_{r}\cap H|&=\lim_{n\to\infty}\Big\{\lambda_1(B_{r_n}\cap H;H)+\Lambda|B_{r_n}\cap H|\Big\}\\
&\le\liminf_{n\to\infty}\Big\{\lambda_1(\omega_n^\ast;H_n)+\Lambda |\omega_n^\ast|\Big\}\\
&\le\liminf_{n\to\infty}\Big\{\lambda_1(\O_n^\ast;\Dr)+\Lambda |\O_n^\ast|\Big\}.
\end{split}\]
In order to prove that the minimizing sequence $\O_n^\ast$ cannot escape at infinity, it is sufficient to show that 
$$\lambda_1(B_{\rho}\cap \Dr;\Dr)+\Lambda |B_{\rho}\cap \Dr|\le \lambda_1(B_{r}\cap H;H)+\Lambda |B_{r}\cap H|,$$
where we assume that $0\in\partial\Dr$ is a point where $\partial\Dr$ is not flat and choose $\rho>r$ such that $|B_\rho\cap\Dr|=|B_r\cap H|$. We consider the first normalized eigenfunction $u\in H^1_0(B_\rho\cap H;H)$ on the half-ball 
$$-\Delta u=\lambda_1(B_r\cap H;H)u\ \ \hbox{in}\ \ B_r\cap H,\qquad \frac{\partial u}{\partial \nu}=0\ \ \hbox{on}\ \ \partial H,\qquad u=0\ \ \hbox{on}\ \ \partial B_r\cap H,$$
and we consider the rearrangement $\widetilde u\in H^1_0(B_\rho\cap\Dr;\Dr)$ of $u$ (see Figure \ref{fig3}) defined as:
$$\{\widetilde u>t\}=B_{\rho(t)}\cap \Dr,\ \ \hbox{where}\ \ \rho(t)>0\ \ \hbox{is such that}\ \ |B_{\rho(t)}\cap \Dr|=|\{u>t\}|.$$
\begin{figure}
\includegraphics[scale=0.4]{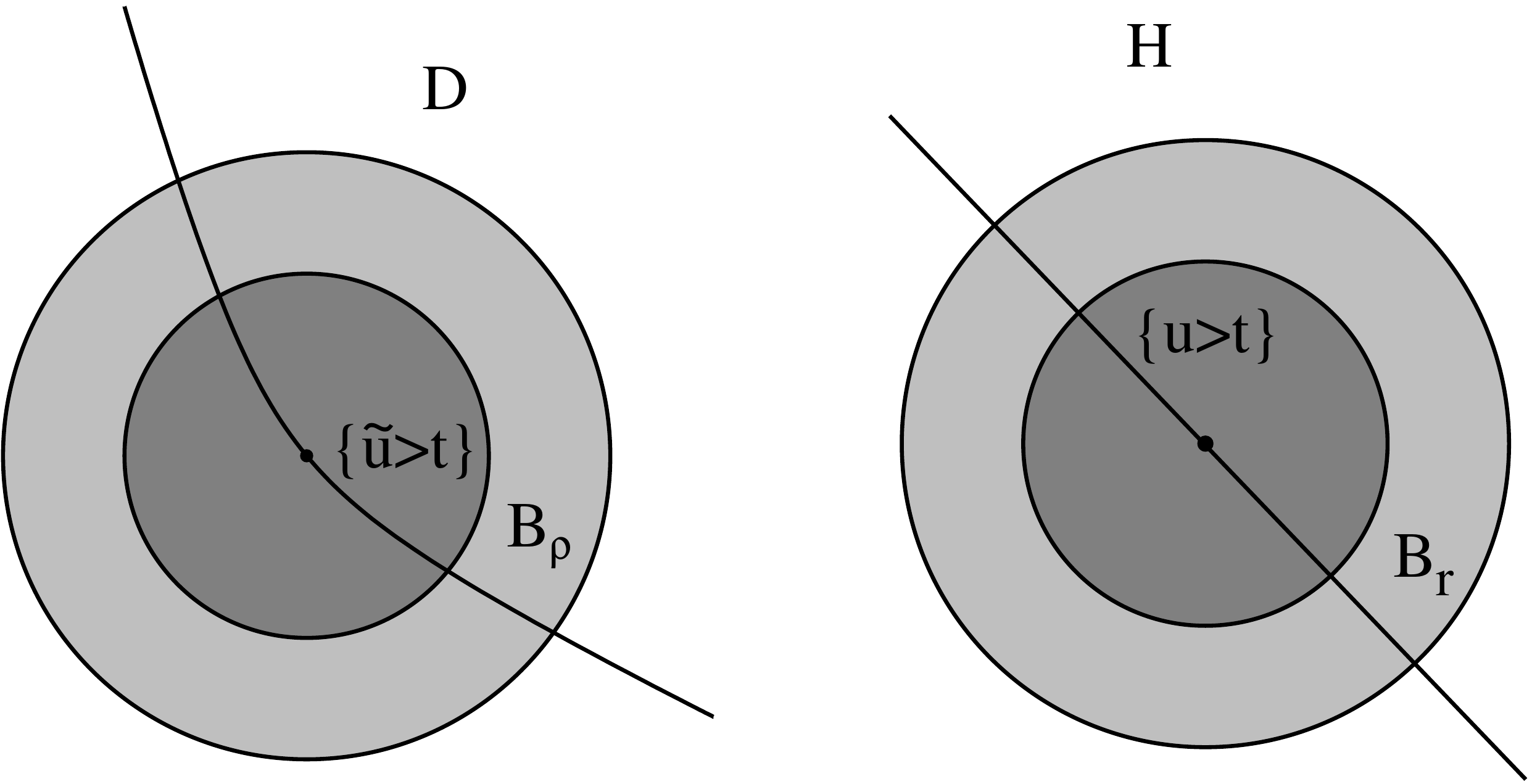}
\caption{A half-ball obtained as limit of a minimizing sequence escaping at infinity (on the right) and a competitor with circular level sets (on the left).}
\label{fig3}
\end{figure}
We notice that $\widetilde u$ is constant on each circle $\partial B_{\rho(t)}\cap\Dr$ and so, $|\nabla \widetilde u|=const$ on $\partial B_{\rho(t)}\cap\Dr$, for every $\rho(t)$. Moreover, since $\partial\Dr$ is not flat in $0$, we have the isoperimetric inequality
$$\HH^{1}\big(\partial B_\rho\cap\Dr\big)<\HH^{1}\big(\partial B_r\cap H\big),$$
for every $\rho$ and $r$ such that $|B_\rho\cap\Dr|=|B_r\cap H|$. Thus, taking $f(t)=|\{u>t\}|$ we repeat the argument from Example \ref{sssector} obtaining
\[\begin{split}
\lambda_1(B_r\cap H;H)=\int_{H}|\nabla u|^2\,dx
&=\int_0^{+\infty}\left(|f'(t)|^{-1}\,\HH^{1}\big(\{u=t\}\cap H\big)^2\right)\,dt\\
&>\int_0^{+\infty}\left(|f'(t)|^{-1}\,\HH^{1}\big(\{\widetilde u=t\}\cap\Dr\big)^2\right)\,dt\\
&=\int_{\Dr}|\nabla\widetilde u|^2\,dx\ge\lambda_1(B_\rho\cap \Dr;\Dr),
\end{split}\]
which concludes the existence part. The regularity of the free boundary of the optimal sets follows by the result from \cite{brla} and the orthogonality to $\partial\Dr$ can be obtained as in Remark \ref{lb1orth}.
\end{proof}

\ack This work is part of the project 2010A2TFX2 {\it``Calcolo delle Variazioni''} funded by the Italian Ministry of Research and University. The first author is member of the Gruppo Nazionale per l'Analisi Matematica, la Probabilit\`a e le loro Applicazioni (GNAMPA) of the Istituto Nazionale di Alta Matematica (INdAM).


\end{document}